\documentclass{amsart}
\usepackage{amsfonts,amscd}

\newtheorem{theorem}{Theorem}[section]
\newtheorem{lemma}[theorem]{Lemma}
\newtheorem{corollary}[theorem]{Corollary}
\newtheorem{proposition}[theorem]{Proposition}
\theoremstyle{remark}

\theoremstyle{definition}

\numberwithin{equation}{section}
\makeatother

\DeclareMathOperator{\Cdb}{{\mathbb C}}
\DeclareMathOperator{\Rdb}{{\mathbb R}}
\DeclareMathOperator{\Ddb}{{\mathbb D}}

\DeclareMathOperator{\Ndb}{{\mathbb N}}

\begin{document}

\title[Operator algebras with cai III]{Operator algebras with contractive approximate identities, III}

\date{\today}
\thanks{The first author was supported by a grant from the NSF.   
The second author is grateful for  support from UK research council
grant  EP/K019546/1}

\author{David P. Blecher}
\address{Department of Mathematics, University of Houston, Houston, TX
77204-3008}
\email[David P. Blecher]{dblecher@math.uh.edu}

\author{Charles John Read}
\address{Department of Pure Mathematics,
University of Leeds,
Leeds LS2 9JT,
England}
 \email[Charles John Read]{read@maths.leeds.ac.uk}

\begin{abstract}  We continue our study of operator algebras with contractive approximate identities
(cais).  In earlier papers we have introduced and studied a new 
notion of positivity in operator algebras,  with an eye to
extending certain $C^*$-algebraic results and theories to more 
general algebras.  
In the first part of this paper we do a more systematic  development of 
this positivity and its associated ordering, proving many 
foundational facts.  In much of this it is not necessary that the algebra have an approximate identity.
In the second part we study 
interesting examples of operator algebras with cais, 
which in particular answer questions raised in previous papers in this series.  Indeed the present work solves
almost all of the open questions raised in these papers. Many of our results apply immediately to function algebras, but we will not take the time
to point these out, although most of these applications seem new.    
  \end{abstract}

\maketitle

\section{Introduction} 

An {\em operator algebra} is a closed subalgebra of $B(H)$, for a
Hilbert space $H$.  We are mostly
interested in operator algebras with  contractive approximate
identities (cai's).  We also call these {\em approximately unital}
operator algebras.    
 
In earlier papers \cite{BRI,BRII,Read} we introduced and studied a new notion of positivity in operator algebras, with an eye to
extending certain $C^*$-algebraic results and theories to more 
general algebras.  We are also simultaneously developing such extensions (see also e.g.\ \cite{Bnew,BNI,BNII}).  With the same goal in mind, in the first part (Sections 2--6) of the present paper, we undertake a systematic 
study of foundational aspects of this positivity, and of the associated ordering.   In particular, a central role is played by the set ${\mathfrak F}_A = \{ a \in A : 
\Vert 1 - a \Vert \leq 1 \}$ (here $1$ is the identity of the unitization if $A$ is nonunital),  and the cones 
$${\mathfrak c}_A = 
\Rdb_+ \, {\mathfrak F}_A, \; \; \text{and} \; \; {\mathfrak r}_A = \overline{{\mathfrak c}_A} = \{ a \in A : a + a^* \geq 0 \}.$$
Elements of these sets, and their roots, play the role in many situations of positive elements
in a $C^*$-algebra.  
In Section
2 we study general properties of these cones and the related real state space.  Section 3 is
a collection of  results on positivity, some of which are used elsewhere  in this 
paper, and some in forthcoming work.  In  Section 4 we study  `strictly positive' elements, a topic that is quite important for $C^*$-algebras.   In Section 5 
we solve a problem from \cite{BRI} concerning when a right 
ideal $xA$ is already closed, and use this to 
characterize algebraically finitely generated r-ideals  in operator algebras.   Section 6
presents versions of  our previous Urysohn lemma for operator algebras (see e.g.\ \cite{BRI, BNII}), but 
now insisting that the `interpolating element'
is `positive'  in our new sense, 
and is as close  as one would wish to being positive in the usual sense.  This
solves the problems raised at the end of \cite{BNII}.    See \cite{CGK} for a recent paper 
containing a kind of  `Urysohn lemma with positivity' for function algebras.
Indeed many results in Sections 2--6 apply immediately to function algebras (uniform algebras),
that is to uniformly closed subalgebras of $C(K)$, since these
are special cases of operator algebras.  We will not take the time
to point these out, although some of these applications are new. 

 In Section 7, we construct an interesting new approximately unital operator algebra, and use it to solve questions arising in our earlier work, and 
which we now describe some background for.   We recall that a semisimple Banach algebra $A$ is
a modular annihilator algebra iff no element of $A$ has a nonzero
limit point in its spectrum \cite[Theorem 8.6.4]{Pal}).
 If $A$ is also commutative then this is equivalent to
the Gelfand spectrum of $A$ being discrete \cite[p.\ 400]{LN}. 
We write $M_{a,b} : A \to A : x \mapsto axb$,
where $a, b \in A$.   Recall that a Banach algebra
is {\em compact} if the map $M_{a,a}$ is compact
for all $a \in A$.  We say that $A$ is {\em weakly compact} if
$M_{a,a}$ is weakly compact for all $a \in A$.  If $A$ is approximately unital  and commutative then $A$ is
weakly compact iff $A$ is an  ideal  in its bidual  $A^{**}$.  
(We use the same symbol $*$ for the Banach dual
and for the involution or adjoint operator, the reader will have to determine which
is meant from the context.)
 In the noncommutative case  $A$ is
weakly compact  iff $A$ is a
{\em hereditary subalgebra} (or {\em HSA}, defined below) in its bidual.
It is known  \cite{Pal} that every compact semisimple Banach algebra
is a modular annihilator algebra (and conversely every semisimple `annihilator algebra', or 
more generally any Banach algebra
with dense socle, is compact).  Thus it is of interest to know
if there are any connections for operator algebras 
between being a semisimple modular annihilator algebra, and being
weakly compact.  See the discussion after Proposition 5.6 in \cite{ABR}, where some specific questions
along these lines are raised.  We solve these here; indeed
we have solved almost all open questions posed in our previous papers
\cite{BRI, BRII, ABR}.  In particular we know that 1)\ 
 a semisimple approximately unital 
operator algebra which is a modular annihilator algebra
need not be weakly compact, 2)\ an approximately unital commutative weakly compact 
semisimple operator algebra $A$ need not have countable or scattered spectrum
(in fact the spectrum of some of its elements can have nonempty interior),
and 3) \ a radical operator algebra can have a semisimple bidual.  
We present the first and second  of these examples  here.   

We now turn to notation and some background facts mostly needed for Sections 2--6.
 In this paper $H$ will always be a Hilbert space, usually the Hilbert space
on which our operator algebra is acting, or is completely isometrically represented.
We recall that by a theorem due to  Ralf Meyer,
every operator algebra $A$ has a  unitization $A^1$ 
which is unique up
to completely isometric homomorphism (see
 \cite[Section 2.1]{BLM}). Below $1$ always refers to
the identity of $A^1$ if $A$ has no identity.   We write 
oa$(x)$  for the operator algebra generated by $x$ in $A$,
the smallest closed subalgebra containing $x$.
A {\em state} of an approximately unital
operator algebra $A$ is a functional with $\Vert \varphi \Vert = \lim_t \, \varphi(e_t) = 1$ 
for some (or any) cai $(e_t)$ for $A$.  These extend to states of $A^1$.  See \cite[Section 2.1]{BLM}
for details.    With this in mind, we define a state on any nonunital operator algebra $A$  
to be a norm $1$ functional that extends to a state
on $A^1$.
We  write $S(A)$ for the collection of such states; this is the  {\em state space} of $A$.   These 
extend further by the Hahn-Banach theorem to a state
on any $C^*$-algebra generated by $A^1$, and therefore restrict 
to a positive functional on any $C^*$-algebra $B$ generated by $A$.
The latter restriction is actually a state, since it has norm $1$
(even on  $A$).
Conversely, every state on $B$ 
extends to a state on $B^1$, and this restricts to a state on $A^1$.
From these considerations it is easy to see that states on an operator algebra $A$ may equivalently 
be defined to be norm $1$ functionals that extend to a state
on any $C^*$-algebra $B$ generated by $A$.

For us a {\em projection}
is always an orthogonal projection, and an {\em idempotent} merely
satisfies $x^2 = x$. If $X, Y$ are sets, then $XY$ denotes the
closure of the span of products of the form $xy$ for $x \in X, y \in
Y$.   We write $X_+$ for the positive operators (in the usual sense) that happen to
belong to $X$. We write $M_n(X)$ for the space of $n \times n$ matrices 
over $X$, and of course $M_n = M_n(\Cdb)$.  
  The
second dual $A^{**}$ is also an operator algebra with its (unique)
Arens product, this is also the product inherited from the von Neumann
algebra $B^{**}$ if
$A$ is a subalgebra of a $C^*$-algebra $B$. 
Note that
$A$ has a cai iff $A^{**}$ has an identity $1_{A^{**}}$ of norm $1$,
and then $A^1$ is sometimes identified with $A + \Cdb 1_{A^{**}}$.
In this case the multiplier algebra $M(A)$ is identified with the
idealizer of $A$ in $A^{**}$ (that is, the set of elements
$\alpha\in A^{**}$ such that $\alpha A\subset A$ and $A
\alpha\subset A$).
 It can also be viewed as the
idealizer of $A$ in $B(H)$, if the above representation  on $H$ is  nondegenerate.

For an operator algebra, not necessarily approximately  unital,
we recall that $\frac{1}{2} {\mathfrak F}_A = \{ a \in A : \Vert 1 - 2 a \Vert \leq 1 \}$.   Here $1$ is
the identity of the unitization $A^1$ if $A$ is nonunital.  
As we said, $A^1$ is uniquely defined, and can be viewed 
as $A + \Cdb I_H$ if $A$ is completely isometrically represented 
as a subalgebra of $B(H)$.  Hence so is
$A^1 + (A^1)^*$ uniquely defined, by e.g.\ 1.3.7 in \cite{BLM}.  
We define $A + A^*$ to be the obvious subspace
of $A^1 + (A^1)^*$.  This is well defined independently 
of the particular Hilbert space $H$ on which $A$
is represented, as shown at the start of Section 3 in \cite{BRII}. 
 Thus a statement such as
$a + b^* \geq 0$ makes sense whenever $a, b \in A$, and is independent of the particular $H$ on which $A$
is represented.
Hence the set ${\mathfrak r}_A = \{ a \in A : a + a^* \geq 0 \}$ is
independent of the particular representation too.    Elements in ${\mathfrak r}_A$, that is elements in $A$ with
${\rm Re}(x) \geq 0$  will sometimes be called {\em accretive}.

We recall that an {\em r-ideal} is a right ideal with a left cai, and an {\em $\ell$-ideal} is a left ideal with a right cai.
We say that an operator algebra $D$ with cai, which is a subalgebra of
another operator algebra $A$, is a HSA (hereditary subalgebra)
in $A$, if $DAD \subset D$.    See
\cite{BHN} for the basic theory of HSA's.
HSA's in $A$ are in an order preserving,
bijective correspondence with the r-ideals in $A$, and
 with the $\ell$-ideals in $A$.   Because of this symmetry
we will usually restrict our results to the r-ideal case; the $\ell$-ideal case will be analogous.
There is also a bijective correspondence with the
{\em open projections} $p \in A^{**}$, by which we mean that there
is a net $x_t \in A$ with $x_t = p x_t  \to p$ weak*,
or equivalently with $x_t = p x_t p \to p$ weak* (see  \cite[Theorem 2.4]{BHN}).  These are
also the open projections $p$ in the sense of Akemann \cite{Ake2} in $B^{**}$, where $B$ is a $C^*$-algebra containing $A$, such that
$p \in A^{\perp \perp}$.   If $A$ is approximately unital then the complement $p^\perp = 1_{A^{**}} - p$
 of an open projection for $A$
 is called a {\em closed projection} for $A$.   A closed projection $q$ for which there exists an $a \in {\rm Ball}(A)$ with 
$aq = qa = q$ is called {\em compact}.  This is equivalent to 
$A$ being a closed projection with respect to 
$A^1$, if $A$ is approximately unital.  See \cite{BNII, BRII} for the theory of compact projections in operator algebras.  

If $x \in {\mathfrak r}_A$ then it is shown in \cite[Section 3]{BRII}  that the operator algebra oa$(x)$ generated by $x$ in $A$ has a cai, which can be taken to be a normalization of $(x^{\frac{1}{n}})$, and the weak* limit of $(x^{\frac{1}{n}})$  is the support projection $s(x)$ for $x$.  This is an open projection.
 We recall that if  $x \in \frac{1}{2} \, {\mathfrak F}_A$ then the {\em peak projection} associated with 
$x$  is   $u(x) = \lim_n \, x^n$ (weak* limit).  We have $u(x^{\frac{1}{n}}) = u(x)$, for 
$x \in \frac{1}{2} \, {\mathfrak F}_A$ (see \cite[Corollary 3.3]{BNII}).  Compact projections
in approximately unital algebras are precisely the infima (or decreasing weak* limits) of collections of such
peak projections  \cite{BNII}.  Note that $x \in {\mathfrak c}_A = \Rdb_+ {\mathfrak F}_A$  iff there is a positive
constant $C$ with $x^* x \leq C(x+x^*)$.    

In this paper we will sometimes use the word `cigar' for 
the wedge-shaped region consisting of numbers $re^{i \theta}$ with
argument $\theta$ such that $|\theta| < \rho$ (for some fixed small $\rho > 0$), 
which are also inside
the circle $|z - \frac{1}{2}| \leq \frac{1}{2}$.
If $\rho$ is small enough so that $|{\rm Im}(z)| < \epsilon/2$ for all $z$ in this
region,  then we will call this a `horizontal cigar of height $< \epsilon$ centered on the 
line segment $[0,1]$ in the $x$-axis'.

 By {\em numerical range}, we will mean the one defined by states, while the literature we quote
usually uses the one defined by vector states on $B(H)$.  However since the former range is the 
closure of the latter, as is well known, this will cause no difficulties.  
For any operator $T \in B(H)$ whose numerical range does not include strictly negative
numbers, and for any $\alpha \in [0,1]$, there is a well-defined `principal' root $T^\alpha$,
which obeys the usual law $T^\alpha T^\beta = T^{\alpha + \beta}$ if $\alpha + \beta \leq 1$
(see e.g.\ \cite{MP,LRS}).  If the numerical range   
is contained in a sector $S_\psi = \{ r e^{i \theta} : 0 \leq r , \, \text{and} \,  
-\psi \leq \theta \leq \psi \}$ where $0 \leq \psi < \pi$, then things are better still.  For fixed $\alpha \in (0,1]$
there is a constant $K > 0$ with $\Vert T^\alpha - S^\alpha \Vert \leq K \Vert  T - S \Vert^\alpha$ for operators $S, T$ with numerical range
in $S_\psi$ (see \cite{MP,LRS}).   Our operators $T$ will in fact be accretive (that is,
 $\psi \leq \frac{\pi}{2}$), 
and then these                 
powers obey the usual laws such as  $T^\alpha T^\beta = T^{\alpha + \beta}$
for all $\alpha, \beta > 0$,
$(T^\alpha)^\beta
= T^{\alpha \beta}$ for $\alpha \in (0,1]$ and any $\beta > 0$, 
and $(T^*)^\alpha  = (T^\alpha)^*$.     We shall see in Lemma \ref{rootf} that if  $\psi < \frac{\pi}{2}$ then $T
\in {\mathfrak c}_{B(H)}$.  The numerical range 
of $T^\alpha$ lies in $S_{\alpha \frac{\pi}{2}}$  for any $\alpha \in (0,1)$.  Indeed if  $n \in \Ndb$ then $T^{\frac{1}{n}}$ is the unique $n$th root of $T$ with numerical 
range in $S_{\frac{\pi}{2 n}}$.   See e.g.\ \cite[Chapter IV, Section 5]{NF} and \cite{Haase}
for all of these facts.   Some of the following facts are no doubt also in the literature, since we do not know of a reference
we sketch short proofs.

\begin{lemma} \label{roots}   For an accretive operator $T \in B(H)$ we have:
\begin{itemize} \item [(1)]   $(cT)^\alpha  = c^\alpha T^\alpha$ for positive scalars $c$, and $\alpha \geq 0$.
\item [(2)]  $\alpha \mapsto T^{\alpha}$ 
is continuous on $(0,\infty)$.
\item [(3)]    $T^\alpha \in {\rm oa}(T)$,  the operator algebra generated by $T$, if $\alpha > 0$.
\end{itemize} 
\end{lemma}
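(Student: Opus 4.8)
The plan is to derive all three parts from the Balakrishnan integral representation of fractional powers: for accretive $T \in B(H)$ and $0 < \alpha < 1$,
$$T^\alpha = \frac{\sin \alpha \pi}{\pi} \int_0^\infty s^{\alpha - 1} (sI + T)^{-1} T \, ds,$$
a norm-convergent Bochner integral (see e.g.\ \cite{Haase}). Convergence follows from the two estimates $\|(sI+T)^{-1}\| \leq 1/s$ (immediate from accretivity) and $\|(sI+T)^{-1}T\| = \|I - s(sI+T)^{-1}\| \leq 2$, which bound the integrand by $2\, s^{\alpha-1}$ near $0$ and by $\|T\|\, s^{\alpha-2}$ near $\infty$. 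With this in hand the three claims reduce to manipulations of the integral together with the semigroup law $T^\alpha T^\beta = T^{\alpha+\beta}$, which I may assume.

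For (1) I would substitute $s = cu$ in the representation of $(cT)^\alpha$ for $0 < \alpha < 1$; using $(cuI + cT)^{-1} = c^{-1}(uI+T)^{-1}$ one collects the scalar factors into $c^\alpha$ and recovers the representation of $T^\alpha$, so $(cT)^\alpha = c^\alpha T^\alpha$. The cases $\alpha = 0$ and $\alpha \in \Ndb$ are trivial, and a general $\alpha = n + \gamma$ with $n \in \Ndb$, $\gamma \in (0,1)$, then follows from $(cT)^\alpha = (cT)^n (cT)^\gamma = c^n T^n c^\gamma T^\gamma = c^\alpha T^\alpha$.

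For (2) I would prove continuity on $(0,1)$ by dominated convergence: on any $[a,b] \subset (0,1)$ the integrand is dominated by $2\, s^{a-1}$ for $s \leq 1$ and by $\|T\|\, s^{b-2}$ for $s \geq 1$, both $\alpha$-independent and integrable, while $\alpha \mapsto s^{\alpha-1}(sI+T)^{-1}T$ and $\alpha \mapsto \sin \alpha \pi$ are continuous; hence $\alpha \mapsto T^\alpha$ is norm-continuous on $(0,1)$. To reach all of $(0,\infty)$, fix $\alpha_0 > 1/2$ and, for $\alpha$ near $\alpha_0$, write $T^\alpha = T^{\alpha_0 - 1/2}\, T^{\alpha - \alpha_0 + 1/2}$, where the second exponent lies in $(0,1)$ and tends to $1/2$; continuity on $(0,1)$ together with multiplication by the fixed operator $T^{\alpha_0 - 1/2}$ then gives $T^\alpha \to T^{\alpha_0}$. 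Points $\alpha_0 \leq 1/2$ already lie in $(0,1)$.

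The real work is in (3), and the step I would watch most carefully is the spectral-permanence claim that $(sI+T)^{-1} \in \mathrm{oa}(T)^1 = \mathrm{oa}(T) + \Cdb I$ for \emph{every} $s > 0$. I would argue as follows. The set $U = \{\lambda \in \rho(T) : (\lambda I - T)^{-1} \in \mathrm{oa}(T)^1\}$ is open, since near any $\lambda_0 \in U$ the resolvent is a norm-convergent power series in $\lambda - \lambda_0$ with coefficients $\pm\,((\lambda_0 I - T)^{-1})^{k+1}$, which lie in the algebra $\mathrm{oa}(T)^1$; and $U$ is relatively closed in $\rho(T)$, as $\mathrm{oa}(T)^1$ is norm-closed and the resolvent is continuous. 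Thus $U$ is a union of connected components of $\rho(T)$. For $|\lambda| > \|T\|$ the Neumann series $\sum_{k \geq 0} \lambda^{-k-1} T^k$ shows $\lambda \in U$, so $U$ contains the unbounded component of $\rho(T)$. Since $\sigma(T)$ is a compact subset of $\{\mathrm{Re} \geq 0\}$, the connected set $\{\mathrm{Re} < 0\}$ lies in that unbounded component; in particular $-s \in U$ for each $s > 0$, whence $(sI+T)^{-1} = -((-s)I - T)^{-1} \in \mathrm{oa}(T)^1$. Writing $(sI+T)^{-1} = cI + a$ with $a \in \mathrm{oa}(T)$ and multiplying by $T \in \mathrm{oa}(T)$ gives $(sI+T)^{-1}T = cT + aT \in \mathrm{oa}(T)$; as $\mathrm{oa}(T)$ is norm-closed, the Bochner integral yields $T^\alpha \in \mathrm{oa}(T)$ for $0 < \alpha < 1$. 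Finally $T^n \in \mathrm{oa}(T)$ for $n \in \Ndb$ is trivial, and for $\alpha = n + \gamma$ with $\gamma \in (0,1)$ we have $T^\alpha = T^n T^\gamma \in \mathrm{oa}(T)$, completing the proof.
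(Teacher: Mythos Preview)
Your proof is correct. For part (1) your approach essentially matches the paper's (it too mentions the Balakrishnan change of variable). For parts (2) and (3), however, you take a genuinely different route. The paper proves (2) by first reducing, via the H\"older-type estimate $\Vert T^\alpha - S^\alpha \Vert \leq K \Vert T - S \Vert^\alpha$ quoted just above the lemma, to the case $T \in \frac{1}{2}{\mathfrak F}_{B(H)}$, and then invokes von Neumann's inequality for the scalar function $f(z) = ((1-z)/2)^\alpha - ((1-z)/2)^\beta$; for (3) it cites \cite{BRII} for $\alpha = 1/n$, obtains rational $\alpha$ algebraically, and then closes up using the continuity in (2). Your arguments, by contrast, stay entirely inside the Balakrishnan representation: dominated convergence gives (2) directly, and for (3) your spectral-permanence argument showing $(sI+T)^{-1} \in {\rm oa}(T)^1$ for all $s > 0$ lets you read $T^\alpha \in {\rm oa}(T)$ straight off the Bochner integral. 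Your approach is more self-contained---it needs neither the external H\"older estimate, nor von Neumann's inequality, nor the cited result from \cite{BRII}---and it makes (3) independent of (2). The paper's approach, on the other hand, economizes by making (3) a near-immediate corollary of (2) and the previously established $1/n$ case.
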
  \begin{proof}   (1) \ This is obvious if $\alpha = \frac{1}{n}$ for $n \in \Ndb$ by  the uniqueness of $n$th roots discussed above. 
In general it can be proved e.g.\ by a change of variable in the Balakrishnan representation for powers
(see e.g.\ \cite{Haase}).

(2) \ By a triangle inequality argument, and the inequality for 
$\Vert T^\alpha - S^\alpha \Vert$ above, we may assume that $T \in {\mathfrak c}_{B(H)}$.    By (1) we may assume that
$T \in  \frac{1}{2} {\mathfrak F}_{B(H)}$.    Define $$f(z) = ((1-z)/2)^{\alpha} - ((1-z)/2)^{\beta} \; ,  \qquad z \in \Cdb, |z| \leq 1.$$
Via the relation $T^\alpha T^\beta = T^{\alpha + \beta}$ above, we may assume that $\beta \in (0,1]$.
Fix such $\beta$.  By complex numbers one can show that  $|f(z)| \leq g(|\alpha - \beta|)$  on the unit disk,
for a function $g$ with $\lim_{t \to 0^+} \, g(t) = 0$.   By von Neumann's inequality,
  used as in \cite[Proposition 2.3]{BRII}, we have $$\Vert T^\alpha - T^\beta \Vert= \Vert f(1 - 2T) \Vert  \leq g(|\alpha - \beta|).$$ 
Now let $\alpha \to \beta$.  

(3) \ We proved this  in the second paragraph 
of \cite[Section 3]{BRII}  if $\alpha = \frac{1}{n}$ for $n \in \Ndb$.   Hence for $m \in \Ndb$ we have by the paragraph above
the lemma that $T^{\frac{m}{n}} = (T^{\frac{1}{n}})^m \in {\rm oa}(T)$.  The general case for $\alpha > 0$ then follows by the continuity in (2).
\end{proof}

In particular, ${\mathfrak r}_A$ is closed under taking roots for any operator algebra $A$.  

\section{Positivity in operator algebras}

In earlier papers \cite{BRI,BRII,Read} we introduced and studied a new notion of positivity in operator algebras.
In this section and the next several sections,
 we study foundational aspects of this positivity, and of the associated ordering, which we call the 
${\mathfrak r}$-{\em ordering}.

Let $A$ be an operator algebra, not necessarily approximately  unital for the present.
Note that ${\mathfrak r}_A = \{ a \in A : a + a^* \geq 0 \}$
is a closed cone in $A$, hence is Archimedean, but it is not 
proper (hence is what is sometimes called a {\em wedge}).  On the other hand
${\mathfrak c}_A = \Rdb_+ {\mathfrak F}_A$ is not closed in general, but it is a a proper cone
(that is, ${\mathfrak c}_A \cap (-{\mathfrak c}_A) = (0)$).
Indeed suppose $a \in {\mathfrak c}_A \cap (-{\mathfrak c}_A)$.  Then  $\Vert 1 - t a \Vert
\leq 1$ and $\Vert 1 + s a \Vert
\leq 1$ for some $s, t > 0$.  By convexity we may assume $s = t$ (by replacing them by
$\min \{ s , t \}$).   It is well known that in any
Banach algebra with an identity of norm $1$, the identity  is an extreme point of the ball.
Applying this in $A^1$ we deduce that  $a = 0$ as desired.

The ${\mathfrak r}$-{\em ordering} is simply the order induced by the above closed cone; that is
$b$ is `dominated' by $a$ iff $a - b \in {\mathfrak r}_A$.  
If $A$ is a subalgebra of an operator algebra $B$, it is clear from
a fact mentioned in the introduction (or at the start of \cite[Section 3]{BRII}) that the positivity of $a + a^*$ may be computed
with reference to any containing $C^*$-algebra, that ${\mathfrak r}_A \subset {\mathfrak r}_B$.
If $A, B$ are
approximately unital subalgebras of $B(H)$ then it follows from \cite[Corollary 4.3 (2)]{BRII}
that $A \subset B$ iff ${\mathfrak r}_A \subset {\mathfrak r}_B$.   As in \cite[Section 8]{BRI},
${\mathfrak r}_A$ contains no idempotents which are not orthogonal projections,
and no nonunitary isometries $u$ (since by the analogue 
of \cite[Corollary 2.8]{BRI} we would have $u u^* = s(u u^*) = s(u^* u) = I$).  
In \cite{BRII} it is shown that $\overline{{\mathfrak c}_A} = {\mathfrak r}_A$.

\begin{lemma}  For
any operator algebra $A$,  $x \in {\mathfrak r}_A$  iff ${\rm Re}(\varphi(x)) \geq 0$ for all
states $\varphi$ of $A^1$.
\end{lemma}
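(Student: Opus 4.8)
The plan is to pass to a containing $C^*$-algebra, translate the condition $x + x^* \geq 0$ into a statement about states there, and then transport it back to states of $A^1$ using the extension/restriction correspondence for states recalled in the introduction.

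First I would represent $A$ completely isometrically in $B(H)$ so that $A^1 = A + \Cdb I_H$, and set $B^1$ to be the (unital) $C^*$-algebra generated by $A^1$ in $B(H)$, so that $A^1 \subseteq B^1$. By the facts recalled in the introduction (and at the start of Section 3 of \cite{BRII}), the element $x + x^*$ and its positivity are computed unambiguously in $B^1$, so $x \in {\mathfrak r}_A$ means precisely that the self-adjoint element $h = x + x^* \in B^1$ is positive. The standard $C^*$-algebraic fact I would invoke is that a self-adjoint $h \in B^1$ satisfies $h \geq 0$ if and only if $\psi(h) \geq 0$ for every state $\psi$ of $B^1$; this is exactly the characterization of positivity via the numerical range in the sense fixed in the introduction. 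Since states of a $C^*$-algebra are self-adjoint, $\psi(x^*) = \overline{\psi(x)}$, whence $\psi(h) = 2\,{\rm Re}(\psi(x))$. Thus $x \in {\mathfrak r}_A$ is equivalent to ${\rm Re}(\psi(x)) \geq 0$ for all states $\psi$ of $B^1$.

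The remaining step is to replace ``states of $B^1$'' by ``states of $A^1$'' without changing the values taken at $x$. Here I would use the two facts recalled in the introduction: every state $\varphi$ of $A^1$ extends, by the Hahn--Banach theorem, to a state $\psi$ of $B^1$; and conversely the restriction to $A^1$ of any state $\psi$ of $B^1$ is again a state of $A^1$, being a unital functional of norm at most $1$, hence of norm exactly $1$. Because $x \in A \subseteq A^1 \subseteq B^1$, in both directions the relevant value is preserved, $\psi(x) = \varphi(x)$. Consequently the value-sets $\{\varphi(x) : \varphi \in S(A^1)\}$ and $\{\psi(x) : \psi \in S(B^1)\}$ coincide, so ${\rm Re}(\varphi(x)) \geq 0$ for all states $\varphi$ of $A^1$ if and only if ${\rm Re}(\psi(x)) \geq 0$ for all states $\psi$ of $B^1$. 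Combining this with the previous paragraph yields both implications of the lemma at once.

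I do not expect a serious obstacle, since the content lies entirely in the two standard ingredients above. The one point requiring care is the faithful matching of state values across the correspondence: one must check that restriction genuinely lands in $S(A^1)$ and that no value at $x$ is lost, which is immediate from unitality and the norm-$1$ normalization of states. One should also keep in mind that $h = x + x^*$ lives in $B^1$ rather than in $A^1$ itself, which is precisely why both the passage to the containing $C^*$-algebra and the extension half of the correspondence are needed.
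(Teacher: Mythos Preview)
Your proposal is correct and follows essentially the same approach as the paper: extend states of $A^1$ to states of $C^*(A^1)$ and reduce to the well-known $C^*$-algebraic characterization $x+x^*\ge 0 \iff \varphi(x+x^*)=2\,\mathrm{Re}\,\varphi(x)\ge 0$ for all states. You have simply spelled out more carefully the restriction half of the correspondence (that a state of $B^1$ restricts to a state of $A^1$), which the paper leaves implicit in the phrase ``we may assume that $A$ is a unital $C^*$-algebra.''
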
  \begin{proof}
 Such $\varphi$
extend to states on $C^*(A^1)$.
 So we may assume that $A$ is a
unital $C^*$-algebra, in which case the result is well known ($x + x^* \geq 0$ iff
$2 {\rm Re}(\varphi(x)) = \varphi(x+x^*) \geq 0$ for all
states $\varphi$).
\end{proof}

{\bf Remark.}  For an operator algebra which is not approximately unital, it is not true that $x \in {\mathfrak r}_A$  iff ${\rm Re}(\varphi(x)) \geq 0$ for all
states $\varphi$ of $A$, with states defined as in the introduction.  An example would be $\Cdb \oplus \Cdb$, with the second summand given the 
zero multiplication. 

\medskip

If $A$ is unital, then $A= {\mathfrak r}_A - {\mathfrak r}_A$; indeed any 
$a \in {\rm Ball}(A)$ may be written as $\frac{1}{2}(1 + a) - \frac{1}{2}(1 -a) \in
\frac{1}{2} {\mathfrak F}_A - \frac{1}{2} {\mathfrak F}_A$.    Hence
if $A$ is approximately unital, then $A^{**} = {\mathfrak r}_{A^{**}} - {\mathfrak r}_{A^{**}}$.
Since the weak* closure of ${\mathfrak r}_A$ is ${\mathfrak r}_{A^{**}}$ (see \cite[Corollary 3.6]{BRII}),
it follows by some variant of 
the Hahn-Banach theorem that the norm closure of ${\mathfrak r}_A - {\mathfrak r}_A$
is $A$.  We shall see next that the norm closure is unnecessary.

\begin{theorem} \label{dualcor}  Let $A$ be an approximately unital operator algebra.
Any 
$x \in A$ with $\Vert x \Vert < 1$ may be written as $x = a-b$ with $a, b \in {\mathfrak r}_A$
and $\Vert a \Vert < 1$ and $\Vert b \Vert < 1$.
In fact 
one may choose such $a, b$ to also be in $\frac{1}{2} {\mathfrak F}_{A}$.   
\end{theorem}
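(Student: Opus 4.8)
The plan is to reduce everything to showing that the open unit ball of $A$ is contained in the convex symmetric set $K := \frac{1}{2}{\mathfrak F}_A - \frac{1}{2}{\mathfrak F}_A$; since every element of $\frac{1}{2}{\mathfrak F}_A$ automatically has norm $\leq 1$, this yields the decomposition, the norm bounds, and the membership in $\frac{1}{2}{\mathfrak F}_A$ of the ``in fact'' clause all at once (strictness coming from the extra room when $\Vert x\Vert<1$). First I would record the elementary structural properties of $C:=\frac{1}{2}{\mathfrak F}_A$: it is norm closed and convex, every element has norm $\leq 1$, and $tC \subseteq C$ for $t \in [0,1]$ (since $1-2ta = (1-t)1 + t(1-2a)$). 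From convexity and this scaling one gets the key stability property: any sub-convex combination $\sum_i \lambda_i a_i$ with $a_i \in C$, $\lambda_i \geq 0$ and $\sum_i \lambda_i \leq 1$ again lies in $C$, by writing $1 - 2\sum_i \lambda_i a_i = (1-\sum_i \lambda_i)1 + \sum_i \lambda_i (1 - 2a_i)$ as a genuine convex combination of elements of norm $\leq 1$. This is exactly what will let an iteration stay inside $C$.

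Second, I would establish the approximate version: $\overline{K}$ contains the closed unit ball of $A$. Here I pass to the bidual. Since $A$ is approximately unital, $A^{**}$ is unital, and for $\Vert x\Vert\le 1$ the identity $x = \frac{1}{2}(1+x) - \frac{1}{2}(1-x)$ exhibits $x \in \frac{1}{2}{\mathfrak F}_{A^{**}} - \frac{1}{2}{\mathfrak F}_{A^{**}}$, because $\Vert 1 - 2\cdot\frac{1}{2}(1\pm x)\Vert = \Vert x\Vert \le 1$. Using that $\frac{1}{2}{\mathfrak F}_A$ is weak* dense in $\frac{1}{2}{\mathfrak F}_{A^{**}}$ (the analogue for $\frac{1}{2}{\mathfrak F}$ of the statement for ${\mathfrak r}_A$ recalled above, from the earlier papers), I approximate $\frac{1}{2}(1\pm x)$ weak* by nets from $\frac{1}{2}{\mathfrak F}_A$, so that $x$ lies in the weak* closure of $K$ in $A^{**}$. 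Since $K \subseteq A$ and the weak topology of $A$ is the restriction of the weak* topology of $A^{**}$, and $K$ is convex (so its weak and norm closures in $A$ coincide), this places $x$ in the norm closure $\overline{K}$.

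Third comes the step the introduction flags as the real point --- removing the closure --- which I would carry out by a telescoping iteration exploiting the stability property above. Given $x$ with $\Vert x\Vert = r < 1$, fix $\eta \in (0, 1-r)$. Using that $\overline{K}$ contains the unit ball, together with scaling, at each stage I write $x_i = r_i c_i + x_{i+1}$ with $c_i \in K$, $r_i = \Vert x_i\Vert$, and $\Vert x_{i+1}\Vert < \eta r_i$, so that $r_i < \eta^i r$ and $\sum_i r_i < r/(1-\eta) < 1$. Writing $c_i = a_i - b_i$ with $a_i, b_i \in \frac{1}{2}{\mathfrak F}_A$, the series $a := \sum_i r_i a_i$ and $b := \sum_i r_i b_i$ converge absolutely (as $\Vert a_i\Vert, \Vert b_i\Vert \le 1$ and $\sum_i r_i < 1$); their partial sums are sub-convex combinations, hence lie in the closed set $\frac{1}{2}{\mathfrak F}_A$, so $a, b \in \frac{1}{2}{\mathfrak F}_A$ with $\Vert a\Vert, \Vert b\Vert \le \sum_i r_i < 1$, while $a - b = \sum_i r_i c_i = x$.

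The main obstacle is the passage from the approximate (closure) statement to the exact one, that is, making the iteration close up inside $\frac{1}{2}{\mathfrak F}_A$ while keeping the norms strictly below $1$; the whole scheme hinges on the convexity-plus-$[0,1]$-scaling of $\frac{1}{2}{\mathfrak F}_A$, which is precisely what forces sub-convex combinations, and hence the limiting series, to remain in $\frac{1}{2}{\mathfrak F}_A$. A secondary point requiring care is the weak* density of $\frac{1}{2}{\mathfrak F}_A$ in $\frac{1}{2}{\mathfrak F}_{A^{**}}$ used in the approximate step, where the scalar part of the bidual unit is the delicate feature.
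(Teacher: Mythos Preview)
Your proposal is correct and follows essentially the same approach as the paper's proof: both pass to the unital bidual to get the decomposition $x = \tfrac{1}{2}(1+x) - \tfrac{1}{2}(1-x)$ in $\tfrac{1}{2}{\mathfrak F}_{A^{**}}$, invoke the weak* density of $\tfrac{1}{2}{\mathfrak F}_A$ in $\tfrac{1}{2}{\mathfrak F}_{A^{**}}$ (the paper cites \cite[Lemma 8.1]{BRI} here), then use convexity to pass to the norm closure, and finally run a geometric-series iteration whose terms are sub-convex combinations and hence remain in $\tfrac{1}{2}{\mathfrak F}_A$. Your bookkeeping differs cosmetically (you start directly with $\Vert x\Vert<1$ and control $\sum r_i<1$, whereas the paper first treats $\Vert x\Vert=1$ and obtains $a,b\in (1+\epsilon)\tfrac{1}{2}{\mathfrak F}_A$ before rescaling), but the substance is the same.
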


\begin{proof}   Assume that $\Vert x \Vert = 1$.
By what we said above  the theorem, 
$x = \eta - \xi$ for $\eta, \xi \in \frac{1}{2} {\mathfrak F}_{A^{**}}$.
By \cite[Lemma 8.1]{BRI}
we deduce that $x$ is in the weak closure of the convex set
$\frac{1}{2} {\mathfrak F}_{A} - \frac{1}{2} {\mathfrak F}_{A}$.
Therefore it is in the norm closure, so given $\epsilon > 0$ there exists 
$a_0, b_0 \in \frac{1}{2} {\mathfrak F}_{A}$ with 
$\Vert x - (a_0 - b_0) \Vert < \frac{\epsilon}{2}$. 
Similarly, there exists
$a_1, b_1 \in \frac{1}{2} {\mathfrak F}_{A}$ with
 $\Vert x - (a_0 - b_0) - \frac{\epsilon}{2} (a_1 - b_1) \Vert < \frac{\epsilon}{2^2}$.
Continuing in this manner, one produces sequences $(a_k), (b_k)$ in $\frac{1}{2} {\mathfrak F}_{A}$. 
Setting $a' = \sum_{k=1}^\infty \, \frac{1}{2^k} \, a_k$ and 
$b' = \sum_{k=1}^\infty \, \frac{1}{2^k} \, b_k$, which are in $\frac{1}{2} {\mathfrak F}_{A}$ since the 
latter is a closed convex set,
we have $x = (a_0 - b_0) + \epsilon (a' - b')$.  Let $a = a_0 + \epsilon a'$ and 
$b = b_0 + \epsilon b'$.    By convexity $\frac{1}{1 + \epsilon} a \in \frac{1}{2} {\mathfrak F}_{A}$
and $\frac{1}{1 + \epsilon} b \in \frac{1}{2} {\mathfrak F}_{A}$.

If $\Vert x \Vert < 1$ choose  $\epsilon > 0$  with $\Vert x \Vert (1 + \epsilon) < 1$.
Then $x/\Vert x \Vert = a - b$ as above, so that 
$x = \Vert x \Vert \, a - \Vert x \Vert \, b$.    We have $$\Vert x \Vert \, a
= (\Vert x \Vert (1 + \epsilon)) \cdot (\frac{1}{1 + \epsilon} a ) \in [0,1) \cdot \frac{1}{2} {\mathfrak F}_{A}
\subset \frac{1}{2} {\mathfrak F}_{A} ,$$
and similarly $\Vert x \Vert \, b \in  \frac{1}{2} {\mathfrak F}_{A}$.  
  \end{proof}

In the language of ordered Banach spaces, the above shows that ${\mathfrak r}_A$ 
and ${\mathfrak c}_A$ are  {\em generating} cones (this is sometimes 
called {\em positively generating} or {\em directed} or {\em co-normal}). 

\bigskip

{\bf Remarks.}   1) \ Can every  
$x \in {\rm Ball}(A)$   be written as $x = a-b$ with $a,b
 \in {\mathfrak r}_A \cap {\rm Ball}(A)$?
  As we said above, this is true if $A$ is unital.
We can show that in general $x \in {\rm Ball}(A)$ cannot be written  as $x = a-b$ with  
$a, b \in \frac{1}{2} {\mathfrak F}_{A}$.   To see this let $A$ be the set 
of functions in the  disk algebra vanishing at $-1$, an approximately unital function 
algebra.   Let $W$ be the  closed connected set obtained from the unit disk by removing the 
`slice' consisting of all complex numbers with negative real part and argument
in a small open interval containing $\pi$.  By the Riemann mapping theorem
 it is easy to see that there is a conformal map $h$ of the disk onto
$W$ taking $-1$ to $0$, so that $h \in {\rm Ball}(A)$.   By way of contradiction
suppose that  
$h = a-b$ with $a,b \in \frac{1}{2} {\mathfrak F}_{A}$.  Then it is easy to see
from the geometry of the circles $B(0,1)$ and $B(\frac{1}{2}, \frac{1}{2})$, that $a + b = 1$
on a nontrivial arc of the unit circle, and hence everywhere.  
However $a(-1) + b(-1) = 0$, which is the desired contradiction.

\medskip

2) \ Applying Theorem \ref{dualcor} to $ix$ for $x  \in A$, one gets a similar decomposition
$x = a-b$ with the `imaginary parts' of $a$ and $b$ positive.   One might ask if, as is suggested by 
the $C^*$-algebra case, one may write for each $\epsilon$, any $x \in A$ with $\Vert x \Vert < 1$
as $a_1 - a_2 + i(a_3 - a_4)$ for $a_k$ with numerical range in a thin horizontal cigar of height $<
\epsilon$ centered on the line segment $[0,1]$ in the $x$-axis.  In fact this is false, as one can see
in the case that $A$ is  the set of upper triangular $2 \times 2$ 
matrices with constant diagonal entries.

\begin{corollary} \label{Ahasc}   An operator algebra $A$ has a cai iff 
$A = {\mathfrak c}_A - {\mathfrak c}_A$.  \end{corollary}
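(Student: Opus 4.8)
The plan is to prove the two implications separately; the forward direction is immediate from Theorem~\ref{dualcor}, while the reverse direction is the substantive one and will be handled by exhibiting the identity of $A^{**}$.

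For the forward direction, suppose $A$ has a cai. Since ${\mathfrak c}_A = \Rdb_+ {\mathfrak F}_A$ is a cone and $\frac{1}{2} {\mathfrak F}_A \subseteq {\mathfrak c}_A$, Theorem~\ref{dualcor} shows at once that every $x \in A$ with $\Vert x \Vert < 1$ lies in $\frac{1}{2}{\mathfrak F}_A - \frac{1}{2}{\mathfrak F}_A \subseteq {\mathfrak c}_A - {\mathfrak c}_A$. For an arbitrary nonzero $x$, I would apply this to $x/(2\Vert x \Vert)$, which has norm $< 1$, and then multiply the resulting decomposition through by the positive scalar $2\Vert x \Vert$, using that ${\mathfrak c}_A$ is stable under positive scalar multiplication. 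Hence $A = {\mathfrak c}_A - {\mathfrak c}_A$.

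For the reverse direction, assume $A = {\mathfrak c}_A - {\mathfrak c}_A$, and recall that $A$ has a cai iff $A^{**}$ has an identity of norm $1$. The idea is to build this identity from support projections. Each $a \in {\mathfrak c}_A \subseteq {\mathfrak r}_A$ has, by the facts recalled in the introduction (from \cite[Section 3]{BRII}), a support projection $s(a)$, the weak* limit of $(a^{\frac{1}{n}})$; this is an open projection for $A$, it lies in $A^{\perp\perp} = A^{**}$, and it is the identity of ${\rm oa}(a)^{**}$, so in particular $s(a)\, a = a\, s(a) = a$. I would then set $p = \bigvee_{a \in {\mathfrak c}_A} s(a)$, the supremum of all these projections. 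The key structural input is that the open projections for $A$ form a complete lattice, so that $p$ is again an open projection for $A$ and hence $p \in A^{**}$ (see \cite{BHN}); this is the step I expect to be the main obstacle, since it is exactly where the theory of HSA's and the Akemann calculus of open projections enters.

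Granting this, for any $x \in A$ I would fix a decomposition $x = a - b$ with $a, b \in {\mathfrak c}_A$. Since $p \geq s(a)$ and $p \geq s(b)$ we have $p\, s(a) = s(a)$ and $p\, s(b) = s(b)$, whence $p a = p\, s(a)\, a = s(a)\, a = a$ and likewise $p b = b$; therefore $p x = a - b = x$, and symmetrically $x p = x$. Thus $p$ is a two-sided identity for $A$. Finally, using that the Arens product on $A^{**}$ is separately weak* continuous and that $A$ is weak* dense in $A^{**}$, one upgrades this to show $p$ is a two-sided identity for all of $A^{**}$, i.e.\ $p = 1_{A^{**}}$. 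As a nonzero projection it has norm $1$, so $A^{**}$ has an identity of norm $1$ and $A$ has a cai, completing the proof.
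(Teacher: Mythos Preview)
Your proof is correct. The forward direction matches the paper's (both invoke Theorem~\ref{dualcor}). For the reverse direction, however, the paper takes a different, shorter route: it cites \cite[Corollary~4.3]{BRII}, which gives ${\mathfrak F}_A = {\mathfrak F}_{A_H}$ where $A_H$ is the largest approximately unital subalgebra of $A$; hence ${\mathfrak c}_A = {\mathfrak c}_{A_H} \subset A_H$, and the hypothesis $A = {\mathfrak c}_A - {\mathfrak c}_A$ forces $A \subset A_H$, so $A = A_H$ is approximately unital. Your argument instead constructs the identity of $A^{**}$ explicitly as the join $p = \bigvee_{a \in {\mathfrak c}_A} s(a)$ of support projections, using the lattice structure of open projections (cf.\ \cite{BHN,BRI}) together with Arens regularity to push $pa = a$ from $A$ up to $A^{**}$. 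Both approaches rest on the same underlying machinery, but the paper's is a one-line reduction to a prepackaged fact about $A_H$, whereas yours is more self-contained and makes the mechanism---that the support projections of ${\mathfrak c}_A$ already assemble the full identity of $A^{**}$---transparent. Your join argument is in the same spirit as the proof of Lemma~\ref{commt} later in the paper, though there commutativity is used for a different purpose.
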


\begin{proof}  This follows from Theorem \ref{dualcor} and 
\cite[Corollary 4.3]{BRII}.  \end{proof}

Most of the results in this section apply to approximately unital operator algebras.  We offer a couple of results 
that are useful in applying the approximately unital  case to algebras with no approximate identity.  We 
will use the space $A_H$ studied in \cite[Section 4]{BRII}, this is the largest approximately unital subalgebra of $A$;
it is actually a  HSA in $A$ (and will be an ideal if $A$ is commutative).

\begin{corollary} \label{Ahasc2}  For any operator algebra $A$,
$$A_H = {\mathfrak r}_A - {\mathfrak r}_A = {\mathfrak c}_A - {\mathfrak c}_A.$$
In particular these spaces are  closed, and form a HSA of $A$.
 \end{corollary}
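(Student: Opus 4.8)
The plan is to transfer everything to the approximately unital subalgebra $A_H$, where Corollary \ref{Ahasc} is already available, and the crucial first step is to show that both cones lie inside $A_H$. Fix $x \in {\mathfrak r}_A$; as recalled in the introduction, the operator algebra ${\rm oa}(x)$ generated by $x$ carries a cai (a normalization of $(x^{1/n})$), so it is an approximately unital subalgebra of $A$ and hence is contained in the largest such subalgebra $A_H$. Thus $x \in A_H$, giving ${\mathfrak r}_A \subseteq A_H$, and since ${\mathfrak c}_A \subseteq \overline{{\mathfrak c}_A} = {\mathfrak r}_A$ we also get ${\mathfrak c}_A \subseteq A_H$. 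Moreover the conditions defining ${\mathfrak F}$, ${\mathfrak c}$ and ${\mathfrak r}$ depend only on the element and a common containing $C^*$-algebra, so ${\mathfrak r}_{A_H} = {\mathfrak r}_A \cap A_H$ and (after a brief check) ${\mathfrak c}_{A_H} = {\mathfrak c}_A \cap A_H$; combining with the inclusions just obtained yields ${\mathfrak r}_{A_H} = {\mathfrak r}_A$ and ${\mathfrak c}_{A_H} = {\mathfrak c}_A$.

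With the cones of $A$ and $A_H$ identified, the equalities follow by sandwiching. Applying Corollary \ref{Ahasc} to the approximately unital algebra $A_H$ gives $A_H = {\mathfrak c}_{A_H} - {\mathfrak c}_{A_H} = {\mathfrak c}_A - {\mathfrak c}_A$, and then from ${\mathfrak c}_A \subseteq {\mathfrak r}_A \subseteq A_H$ we read off
$$A_H = {\mathfrak c}_A - {\mathfrak c}_A \subseteq {\mathfrak r}_A - {\mathfrak r}_A \subseteq A_H,$$
so equality holds throughout, which is the displayed claim. (One could instead feed $A_H$ into Theorem \ref{dualcor} to obtain $A_H \subseteq {\mathfrak r}_{A_H} - {\mathfrak r}_{A_H}$ directly, but the inclusion chain is shorter.) For the final sentence, $A_H$ is a HSA in $A$ by \cite[Section 4]{BRII}, so once the two difference sets are known to coincide with $A_H$ they are automatically closed and form a HSA.

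The step I expect to be the main obstacle is the opening inclusion ${\mathfrak r}_A \subseteq A_H$: the rest is formal bookkeeping, but this is where the content lies. It rests on two facts I would want to cite carefully --- that each single accretive element generates an approximately unital operator algebra, and that $A_H$ genuinely absorbs \emph{every} approximately unital subalgebra of $A$ (so that ``largest'' is read as ``containing all others'', not merely ``maximal'').
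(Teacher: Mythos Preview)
Your proof is correct and follows essentially the same strategy as the paper's: identify ${\mathfrak r}_A$ and ${\mathfrak c}_A$ with ${\mathfrak r}_{A_H}$ and ${\mathfrak c}_{A_H}$, then apply the approximately unital results (Theorem~\ref{dualcor} / Corollary~\ref{Ahasc}) to $A_H$. The only real difference is one of packaging: the paper simply cites \cite[Corollary 4.3]{BRII} for the equality ${\mathfrak r}_A = {\mathfrak r}_{A_H}$ (and the analogous fact for ${\mathfrak F}$), whereas you re-derive the inclusion ${\mathfrak r}_A \subseteq A_H$ from the fact that ${\rm oa}(x)$ has a cai for accretive $x$ --- which is in fact the underlying reason that \cite[Corollary 4.3]{BRII} holds. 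Your caution about ``largest'' meaning ``contains all approximately unital subalgebras'' is well placed, and that is indeed how $A_H$ behaves in \cite[Section 4]{BRII}.
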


\begin{proof} 
In the language of \cite[Section 4]{BRII}, and using
\cite[Corollary 4.3]{BRII}, ${\mathfrak r}_A = {\mathfrak r}_{A_H}$, 
and $$A_H = \overline{{\rm Span}}({\mathfrak r}_{A_H}) = {\mathfrak r}_{A_H} - {\mathfrak r}_{A_H}
= {\mathfrak r}_A - {\mathfrak r}_A,$$ by Theorem \ref{dualcor}.  
A similar argument works with ${\mathfrak r}_{A_H}$ replaced by
${\mathfrak c}_{A_H}$ using Corollary \ref{Ahasc} and facts from
\cite[Section 4]{BRII} about ${\mathfrak F}_{A_H}$.
\end{proof}

\begin{lemma} \label{mnah}  Let $A$ be any operator algebra.  Then   for every $n \in \Ndb$, 
$$M_n(A_H) = M_n(A)_H \; , \; \; \; \; \;  {\mathfrak r}_{M_n(A)} = {\mathfrak r}_{M_n(A_H)}
 \; , \; \; \; \; \;  {\mathfrak F}_{M_n(A)} = {\mathfrak F}_{M_n(A_H)}$$
(these are the matrix spaces).  
\end{lemma}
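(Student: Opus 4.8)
The plan is to prove the three identities in the order listed, deducing each from the preceding ones together with the scalar-level results already established. The key structural fact I would lean on is Corollary \ref{Ahasc2}, which identifies $A_H$ with ${\mathfrak r}_A - {\mathfrak r}_A = {\mathfrak c}_A - {\mathfrak c}_A$, and the defining property of $A_H$ as the largest approximately unital subalgebra (equivalently, the largest HSA) of $A$. The first identity $M_n(A_H) = M_n(A)_H$ is really the crux; once it is in hand, the other two should follow cheaply, since ${\mathfrak r}$ and $\frac{1}{2}{\mathfrak F}$ depend only on positivity computed in a containing $C^*$-algebra, and that positivity for matrices over $A$ is insensitive to whether we view entries in $A$ or in $A_H$.

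For the matrix identity, I would argue by two inclusions. For $M_n(A_H) \subseteq M_n(A)_H$, note that $M_n(A_H)$ is an approximately unital subalgebra of $M_n(A)$: if $(e_t)$ is a cai for $A_H$, then the diagonal matrices $\mathrm{diag}(e_t, \dots, e_t)$ form a cai for $M_n(A_H)$ (this uses that $M_n(A_H)$ carries the operator algebra structure of matrices over an approximately unital algebra). Since $M_n(A)_H$ is the \emph{largest} approximately unital subalgebra of $M_n(A)$, we get $M_n(A_H) \subseteq M_n(A)_H$. The reverse inclusion $M_n(A)_H \subseteq M_n(A_H)$ is where I expect the real work. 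My plan here is to use Corollary \ref{Ahasc2} applied to the algebra $M_n(A)$: it gives $M_n(A)_H = {\mathfrak r}_{M_n(A)} - {\mathfrak r}_{M_n(A)}$, so it suffices to show every element of ${\mathfrak r}_{M_n(A)}$ lies in $M_n(A_H)$, which reduces to showing ${\mathfrak r}_{M_n(A)} \subseteq M_n({\mathfrak r}_A)$ in the sense that an accretive matrix has entries landing in $A_H$. The natural device is to compress: if $X = [x_{ij}] \in {\mathfrak r}_{M_n(A)}$, so $X + X^* \geq 0$ in a containing $C^*$-algebra of matrices, then compressing by the matrix units $E_{ii}$ and by $\frac{1}{\sqrt 2}(E_{ii} + E_{jj})$-type projections produces accretivity conditions forcing each entry $x_{ij}$ into ${\mathfrak r}_A - {\mathfrak r}_A = A_H$.

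The main obstacle will be making this compression argument clean: individual off-diagonal entries need not themselves be accretive, so I cannot simply say $x_{ij} \in {\mathfrak r}_A$. Instead I would extract that the diagonal entries $x_{ii}$ are accretive (hence in ${\mathfrak r}_A \subseteq A_H$) by compressing $X + X^* \ge 0$ to the $(i,i)$ corner, and then control the off-diagonal entries by a standard operator-matrix estimate: positivity of a $2\times 2$ block $\begin{bmatrix} x_{ii}+x_{ii}^* & * \\ * & x_{jj}+x_{jj}^* \end{bmatrix}$ forces the off-diagonal terms to be dominated, in norm and in the relevant ideal sense, by the diagonal accretive entries, so that each $x_{ij} \in \overline{x_{ii} A + A x_{jj}} \subseteq A_H$ using that $A_H$ is a HSA (hence $A_H A A_H \subseteq A_H$ and $A_H$ absorbs such products). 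This is the step I would write most carefully.

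Once $M_n(A_H) = M_n(A)_H$ is established, the remaining two identities follow by reduction to the approximately unital case. Indeed, by the general principle ${\mathfrak r}_D = {\mathfrak r}_{D_H}$ from Corollary \ref{Ahasc2} and \cite[Section 4]{BRII}, applied to $D = M_n(A)$, we get ${\mathfrak r}_{M_n(A)} = {\mathfrak r}_{M_n(A)_H} = {\mathfrak r}_{M_n(A_H)}$, which is the second identity. For the third, $\frac{1}{2}{\mathfrak F}_{M_n(A)} = \{ Y \in M_n(A) : \Vert 1 - 2Y \Vert \leq 1 \}$ depends only on the norm in $M_n(A^1)$ and on membership in the subalgebra; since $M_n(A_H)$ sits inside $M_n(A)$ isometrically and the unitization is computed in the same ambient operator space, the condition $\Vert 1 - 2Y \Vert \leq 1$ together with $Y \in {\mathfrak c}_{M_n(A)} \subseteq M_n(A)_H = M_n(A_H)$ forces $Y \in {\mathfrak F}_{M_n(A_H)}$, and the reverse inclusion is immediate from $M_n(A_H) \subseteq M_n(A)$. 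Thus I expect all three identities to fall out of the single matrix identity for $A_H$, with the accretive-entry compression being the one nontrivial ingredient.
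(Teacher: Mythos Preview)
Your overall architecture matches the paper's: the easy inclusion $M_n(A_H)\subseteq M_n(A)_H$ via the diagonal cai, reduction of the reverse inclusion to ${\mathfrak r}_{M_n(A)}\subseteq M_n(A_H)$ using Corollary~\ref{Ahasc2}, and then reading off the ${\mathfrak r}$ and ${\mathfrak F}$ identities from ${\mathfrak r}_D={\mathfrak r}_{D_H}$, ${\mathfrak F}_D={\mathfrak F}_{D_H}$. That part is fine.

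The gap is in your treatment of the off-diagonal entries. You propose to deduce $x_{ij}\in\overline{x_{ii}A+Ax_{jj}}\subseteq A_H$ from the $2\times 2$ block positivity and the HSA property. Both halves of this are problematic. First, the HSA property $A_H A A_H\subseteq A_H$ does \emph{not} give $A_H A\subseteq A_H$ or $A A_H\subseteq A_H$; for instance, if $A$ is the algebra of matrices $\bigl(\begin{smallmatrix}0&\beta\\0&\gamma\end{smallmatrix}\bigr)$ in $M_2$, then $A_H=\Cdb E_{22}$, yet $A E_{22}=A\not\subseteq A_H$. So $\overline{x_{ii}A+Ax_{jj}}\subseteq A_H$ is simply false in general. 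Second, the factorization you hint at lives in the ambient $C^*$-algebra (it gives $x_{ij}+x_{ji}^*$ in terms of square roots of $x_{ii}+x_{ii}^*$ and $x_{jj}+x_{jj}^*$), not in $A$, and it controls $x_{ij}+x_{ji}^*$ rather than $x_{ij}$ itself; you never explain how to descend back into $A$.

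The paper's device avoids all of this with a scalar polarization. For $a=[a_{ij}]$ with $a+a^*\ge 0$ and any scalars $z_1,\dots,z_n$, one has $\sum_{i,j}\bar z_i(a_{ij}+a_{ji}^*)z_j\ge 0$, hence $\sum_{i,j}\bar z_i a_{ij} z_j\in{\mathfrak r}_A$. Fixing a pair $(i,j)$, zeroing the other $z_k$, and choosing $(z_i,z_j)=(1,1)$ and then $(1,i)$ yields $a_{ij}+a_{ji}\in A_H$ and $a_{ij}-a_{ji}\in A_H$ (after subtracting the diagonal contributions, which are already in ${\mathfrak r}_A\subseteq A_H$). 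This forces $a_{ij}\in A_H$ directly, with no appeal to ideal-type absorption. Replacing your block-domination step by this polarization argument repairs the proof; the rest of your outline then goes through exactly as in the paper.
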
  

\begin{proof}   Clearly $M_n(A_H)$ is an approximately unital subalgebra of $M_n(A)$.  So $M_n(A_H)$ is contained in $M_n(A)_H$,
 since the latter is the largest approximately unital subalgebra of $M_n(A)$.    
To show that $M_n(A)_H \subset M_n(A_H)$ it suffices by 
Corollary \ref{Ahasc2} to show that ${\mathfrak r}_{M_n(A)} \subset M_n(A_H)$.  So  suppose that $a = [a_{ij}] \in M_n(A)$
with $a + a^* \geq 0$.  Then $a_{ii} + a_{ii}^* \geq 0$ for each $i$.  We also have 
$\sum_{i,j} \, \bar{z_i} \, (a_{ij} + a_{ji}^*) \, z_j \geq 0$ for all scalars 
$z_1, \cdots , z_n$.   So $\sum_{i,j} \, \bar{z_i} \, a_{ij} z_j \in {\mathfrak r}_A$.
 Fix an $i, j$, which we will assume to be $1, 2$ for simplicity.
Set all $z_k = 0$ if $k \notin \{ i, j \} = \{ 1, 2 \}$, to deduce 
$$\bar{z_1} z_2 a_{12}  + \bar{z_2} z_1 a_{21} = \sum_{i,j = 1}^2 \, \bar{z_i} \, a_{ij} z_j \, - \, (|z_1|^2 a_{11} +
|z_2|^2 a_{22}) \in {\mathfrak r}_A - {\mathfrak r}_A = A_H.$$
Choose $z_1 = 1$; if  
$z_2 = 1$ then $a_{12}  + a_{21} \in A_H$, while if $z_2 = i$ then 
$i(a_{12}  -  a_{21}) \in A_H$.    So $a_{12}, a_{21} \in A_H$.  A similar argument 
shows  that $a_{ij}  \in A_H$ for all $i, j$.  Thus $M_n(A_H) = M_n(A)_H$, from which we deduce
by \cite[Corollary 4.3 (1)]{BRII} that
$${\mathfrak r}_{M_n(A)} = {\mathfrak r}_{M_n(A)_H} = {\mathfrak r}_{M_n(A_H)}.$$
Similarly ${\mathfrak F}_{M_n(A)} = {\mathfrak F}_{M_n(A)_H}
= {\mathfrak F}_{M_n(A_H)}$.
\end{proof}

The last result is used in \cite{BBS}.
 
We write ${\mathfrak c}^{\Rdb}_{A^*}$ for the real dual cone of ${\mathfrak r}_A$, the 
set of continuous $\Rdb$-linear $\varphi : A \to \Rdb$ such that $\varphi({\mathfrak r}_A) \subset [0,\infty)$.
Since $\overline{{\mathfrak c}_A} = {\mathfrak r}_A$ this is also the real dual cone of ${\mathfrak c}_A$.

We collect some facts about real states.   Much of this parallels the development 
of ordinary states, but we include a brief sketch of
the details to  save others having to check these
 each time they are needed
in the future.  
In the rest of this section $A$ is an approximately unital operator algebra.
A {\em real state} on $A$ will be a contractive  $\Rdb$-linear $\Rdb$-valued functional on $A$
such that $\varphi(e_t) \to 1$ for some cai $(e_t)$ of $A$.  This is equivalent to
$\varphi^{**}(1) = 1$, where $\varphi^{**}$ is the canonical 
$\Rdb$-linear extension to $A^{**}$, and $1$ is the identity of 
$A^{**}$ (here we are using the canonical identification between real second duals
and complex second duals of a complex Banach space \cite{Li}).  Hence $\varphi(e_t) \to 1$ for every
cai $(e_t)$ of $A$.  

Since we can identify $A^1$ with $A + \Cdb 1_{A^{**}}$ if we like, by the 
last paragraph it follows that  real states of $A$ extend to real states of $A^1$, hence by the 
Hahn-Banach theorem they extend to real states of $C^*(A^1)$.
We claim that a  real state $\psi$ on a $C^*$-algebra $B$ is positive on $B_+$, and
is zero  on $i B_+$.
To see this, we may assume that $B$ is a von Neumann algebra
(by extending the state to its second dual similarly to as in the last paragraph).  For any projection $p \in B$,
$C^*(1,p) \cong \ell^\infty_2$, and it is an easy exercise to see that 
real states on $\ell^\infty_2$ are positive on $(\ell^\infty_2)_+$
and are zero on $i (\ell^\infty_2)_+$.
Thus $\psi(p) \geq 0$ and $\psi(ip) = 0$ for any projection $p$, hence 
$\psi$ is positive on $B_+$  and zero  on $i B_+$ by the Krein-Milman theorem.   

We deduce:

\begin{lemma}  \label{duals}  Real states on an approximately unital operator algebra
$A$ are in ${\mathfrak c}^{\Rdb}_{A^*}$. 
\end{lemma}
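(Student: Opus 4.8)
The plan is to deduce the lemma directly from the discussion immediately preceding it. There it was established that any real state $\varphi$ on $A$ extends to a real state $\psi$ on a $C^*$-algebra $B = C^*(A^1)$ containing $A$, and that such a $\psi$ is positive on $B_+$ and vanishes on $iB_+$. Granting these facts, the only remaining task is to connect $\varphi(x)$, for $x \in {\mathfrak r}_A$, to the positive element $x + x^* \in B_+$.

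First I would fix $x \in {\mathfrak r}_A$, so that $x + x^* \geq 0$ in $B$ (this positivity may be computed in any containing $C^*$-algebra, as noted in the introduction). Since $\psi$ is $\Rdb$-linear and extends $\varphi$, we have $\psi(x) = \varphi(x)$. Next I would check that $\psi(x) = \frac{1}{2} \psi(x + x^*)$, equivalently that $\psi(x - x^*) = 0$. Writing $\frac{x - x^*}{2} = i\, {\rm Im}(x)$, where ${\rm Im}(x) = \frac{x - x^*}{2i}$ is self-adjoint, I would decompose ${\rm Im}(x) = p - q$ into its positive and negative parts $p, q \in B_+$. Then $\frac{x - x^*}{2} = ip - iq$, and since $\psi$ vanishes on $iB_+$ this gives $\psi(\frac{x-x^*}{2}) = 0$, whence $\psi(x) = \psi(x^*)$ and $\psi(x + x^*) = 2 \psi(x)$.

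Combining these observations, $\varphi(x) = \psi(x) = \frac{1}{2} \psi(x + x^*) \geq 0$, because $x + x^* \in B_+$ and $\psi$ is positive on $B_+$. As $x \in {\mathfrak r}_A$ was arbitrary, this shows $\varphi({\mathfrak r}_A) \subset [0,\infty)$, that is, $\varphi \in {\mathfrak c}^{\Rdb}_{A^*}$, which is exactly the assertion of the lemma.

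I do not expect any serious obstacle here: the substantive content — the extension of real states to a generating $C^*$-algebra, and the positivity on $B_+$ together with the vanishing on $iB_+$ — was already carried out in the paragraphs before the lemma. The only point requiring even minor care is the elementary splitting of the skew-Hermitian part of $x$ into $iB_+ - iB_+$ so as to invoke the vanishing of $\psi$ on $iB_+$; once that is in hand, the conclusion is immediate.
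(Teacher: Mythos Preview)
Your proof is correct and follows essentially the same route as the paper: extend the real state to $\tilde\varphi$ on $B=C^*(A^1)$, split $x=\tfrac{1}{2}(x+x^*)+\tfrac{1}{2}(x-x^*)$, kill the skew part via $x-x^*\in iB_{\rm sa}=iB_+-iB_+$ and $\tilde\varphi(iB_+)=0$, and conclude from $\tilde\varphi(x+x^*)\geq 0$. The only cosmetic difference is that the paper writes the skew part as $-i\cdot i(a-a^*)$ rather than $i\,{\rm Im}(x)$, but the substance is identical.
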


\begin{proof}
If $a + a^* \geq 0$, and $\tilde{\varphi}$ is the 
real state extension above to $B = C^*(A^1)$, 
 then $$\varphi(a) = \frac{1}{2} \tilde{\varphi}(a + a^*) + 
\frac{1}{2} \tilde{\varphi}(-i \cdot i (a - a^*)) = 
\frac{1}{2} \tilde{\varphi}(a + a^*) \geq 0 ,$$ since 
$i(a-a^*) \in B_{\rm sa} = B_+ - B_+$, and $\tilde{\varphi}(i(B_+ - B_+)) = 0$, as we said
above.
 \end{proof}

\begin{lemma}  \label{dualc}    Suppose that $A$ is an approximately unital operator algebra.
 The real dual cone ${\mathfrak c}^{\Rdb}_{A^*}$  equals 
$\{ t \, {\rm Re}(\psi) : \psi \in S(A) , \, t \in [0,\infty) \}$. 
It also equals the set of restrictions to $A$ of the real parts of
positive functionals on any $C^*$-algebra containing 
(a copy of) $A$ as a closed subalgebra.    Also, ${\mathfrak c}^{\Rdb}_{A^*}$ is a   proper cone. \end{lemma}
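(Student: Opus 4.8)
The plan is to prove the three asserted descriptions of $\mathfrak{c}^{\Rdb}_{A^*}$ by establishing a cycle of inclusions, and then handle properness separately. Throughout I will work with the extension mechanism already set up in the excerpt: every real state on $A$ extends to a real state on $A^1$, hence on $C^*(A^1)$, and real states on a $C^*$-algebra are positive on positive elements and vanish on $i B_+$.

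For the first equality, I would argue $\{ t\,\mathrm{Re}(\psi) : \psi \in S(A),\, t \geq 0 \} \subseteq \mathfrak{c}^{\Rdb}_{A^*}$ as the easy direction. If $\psi \in S(A)$, then $\mathrm{Re}(\psi)$ is a contractive $\Rdb$-linear functional sending some cai to $1$, so it is a real state, and by Lemma \ref{duals} it lies in $\mathfrak{c}^{\Rdb}_{A^*}$; since the latter is a cone, scaling by $t \geq 0$ stays inside. The reverse inclusion is the crux. Given $0 \neq \varphi \in \mathfrak{c}^{\Rdb}_{A^*}$, I want to realize $\varphi / \Vert \varphi \Vert$ as $\mathrm{Re}(\psi)$ for an honest state $\psi$. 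First I would show a real state (after normalizing) is exactly what I get: because $\mathfrak{r}_A$ is generating (Theorem \ref{dualcor} and the remarks on generating cones) and $\varphi$ is nonnegative on $\mathfrak{r}_A$, one checks $\Vert \varphi \Vert$ is attained along the cai, i.e.\ $\varphi^{**}(1) = \Vert \varphi \Vert$; this uses that $1 = \frac12(1+a) + \frac12(1-a)$-type decompositions force the norm to be read off at the identity. Normalizing, $\varphi$ becomes a real state, and then I define $\psi$ on $A$ by $\psi(a) = \varphi(a) - i\varphi(ia)$, the standard complexification. The work is to verify $\psi$ is a genuine state: $\Rdb$-linearity of $\varphi$ makes $\psi$ complex-linear, and $\psi(e_t) \to 1$ follows since $\varphi(e_t) \to 1$ while $\varphi(ie_t) \to 0$ (the cai is asymptotically ``positive,'' so $ie_t$ is asymptotically in $iB_+$ where the extended real state vanishes). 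The contractivity $\Vert \psi \Vert = 1$ should drop out of $\psi^{**}(1) = 1$ together with the $C^*$-extension.

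For the second equality, the inclusion ``restrictions of real parts of positive functionals $\supseteq \mathfrak{c}^{\Rdb}_{A^*}$'' is immediate from the extension discussion: any $\varphi \in \mathfrak{c}^{\Rdb}_{A^*}$, being a scaled real state, extends to a real state on $C^*(A^1)$, which is the real part of a positive functional there, and restricting back recovers $\varphi$. Conversely, if $\rho$ is positive on a containing $C^*$-algebra $B$, then $\mathrm{Re}(\rho|_A)$ is nonnegative on $\mathfrak{r}_A$: for $a + a^* \geq 0$ one has $\mathrm{Re}(\rho(a)) = \tfrac12 \rho(a + a^*) \geq 0$, exactly the computation in the proof of Lemma \ref{duals}. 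So this direction is essentially a repetition of that lemma's argument and should be short.

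Finally, for properness of $\mathfrak{c}^{\Rdb}_{A^*}$, I would suppose $\varphi$ and $-\varphi$ both lie in the cone, so $\varphi(\mathfrak{r}_A) \subseteq [0,\infty)$ and $\varphi(\mathfrak{r}_A) \subseteq (-\infty,0]$, forcing $\varphi \equiv 0$ on $\mathfrak{r}_A$. Since $\mathfrak{r}_A$ is generating, $A = \mathfrak{r}_A - \mathfrak{r}_A$ (Corollary \ref{Ahasc2} or Theorem \ref{dualcor} in the approximately unital case), so $\varphi$ vanishes on all of $A$, giving $\varphi = 0$. I expect the main obstacle to be the reverse inclusion in the first equality, specifically verifying cleanly that a normalized element of the real dual cone is a real state and that its complexification is a contractive state — the contractivity and the norm-attainment at the cai are where one must be careful, and I would lean on the second-dual identification $\varphi^{**}(1) = \Vert\varphi\Vert$ together with the fact that $\mathfrak{r}_{A^{**}}$ is the weak* closure of $\mathfrak{r}_A$ to push the estimate through.
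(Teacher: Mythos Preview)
Your approach is correct and takes a genuinely different route from the paper's. The paper handles the hard inclusion (showing every $\varphi \in {\mathfrak c}^{\Rdb}_{A^*}$ arises from a positive functional) by observing that the complexification $\tilde{\varphi}(a) = \varphi(a) - i\varphi(ia)$ is \emph{real completely positive} in the sense of \cite{BBS}: for $[a_{ij}] \in {\mathfrak r}_{M_n(A)}$ one has $\sum_{i,j}\overline{z_i}\,\varphi(a_{ij}+a_{ji}^*)\,z_j = \varphi\big(\sum_{i,j}\overline{z_i}(a_{ij}+a_{ji}^*)z_j\big) \geq 0$, using Lemma~\ref{mnah}. It then invokes an extension theorem from \cite{BBS} to produce a positive extension to $C^*(A)$. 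Your argument instead stays inside the paper's own machinery: you show $\varphi^{**}(1) = \Vert\varphi\Vert$ via Theorem~\ref{dualcor} (for $\Vert x\Vert<1$ write $x=a-b$ with $a,b\in\frac12{\mathfrak F}_A$, use $1-a,1-b\in{\mathfrak r}_{A^{**}}$ and weak*-density of ${\mathfrak r}_A$ in ${\mathfrak r}_{A^{**}}$), normalize to a real state, and then complexify. Your route avoids the external reference to \cite{BBS} and the matrix-level computation, at the cost of needing the norm-attainment argument; the paper's route is more direct once the \cite{BBS} result is in hand and does not require normalizing first.

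One small cleanup: your justification that $\varphi(ie_t)\to 0$ (``$ie_t$ is asymptotically in $iB_+$'') is not quite right as stated, since a general cai need not be close to positive. The clean argument is purely metric: since the complexification satisfies $\Vert\psi\Vert = \Vert\varphi\Vert = 1$ (standard), one has $\varphi(e_t)^2 + \varphi(ie_t)^2 = |\psi(e_t)|^2 \leq 1$, and $\varphi(e_t)\to 1$ forces $\varphi(ie_t)\to 0$; hence $\psi(e_t)\to 1$ and $\psi$ is a state. Your properness argument and the easy inclusions match the paper's.
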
  

\begin{proof}  
To see that ${\mathfrak c}^{\Rdb}_{A^*}$  is a proper cone,
let  $\rho, -\rho \in {\mathfrak c}^{\Rdb}_{A^*}$.  Then
$\rho(a) = 0$ for all $a \in {\mathfrak r}_A$, and hence $\rho = 0$
by the fact above that the norm closure of ${\mathfrak r}_A - {\mathfrak r}_A$
is $A$.

The restriction to $A$ of the real part of
any positive functional on a $C^*$-algebra containing $A$, is easily seen
to be in ${\mathfrak c}^{\Rdb}_{A^*}$.  For the converse, if $\varphi \in {\mathfrak c}^{\Rdb}_{A^*}$ define 
$\tilde{\varphi}(a) = \varphi(a) - i \varphi(ia)$ for $a \in A$.
As is checked in a basic functional analysis course, $\tilde{\varphi} 
\in A^*$ (the complex dual space), and note that 
$\tilde{\varphi}({\mathfrak r}_A) \subset {\mathfrak r}_{\Cdb}$.  This is also true 
at the matrix level, since in the notation of the proof of 
Lemma \ref{mnah}, $$\sum_{i,j} \overline{z_i} \varphi(a_{ij} + a_{ji}^*) z_j 
= \varphi(\sum_{i,j} \overline{z_i} (a_{ij} + a_{ji}^*) z_j) \geq 0 .$$
That is, $\varphi$ is real completely positive in the sense of \cite[Section 2]{BBS},
and so by that paper
$\tilde{\varphi}$ extends to a positive functional $\psi$ on $C^*(A)$,
and $\varphi = {\rm Re}(\psi)$.  

The remaining statements follow
from the well known facts that, first, 
states of an approximately unital 
operator algebra $A$ extend to states of any $C^*$-algebra generated by $A$,
as we said in the introduction,
and second, positive functionals on any $C^*$-algebra $B$  
are just the nonnegative multiples of states on $B$.   \end{proof}  

Below we will use silently a couple of times the obvious fact that 
if $\varphi, \psi$ are two complex valued functionals on $A$ with
${\rm Re}(\varphi(a)) = {\rm Re}(\psi(a))$ for all $a \in A$,  
then 
$\varphi = \psi$ on $A$. 

\begin{lemma}  \label{dualc2}  Suppose that $A$ is an approximately unital operator algebra.
\begin{itemize} \item [(1)]   If $f \leq g \leq h$   in $B(A,\Rdb)$ in 
the ${\mathfrak c}_{A^*}$-ordering,
then $\Vert g \Vert \leq 2 \max \{ \Vert f \Vert , \Vert h \Vert \}$.
\item [(2)]   The cone  ${\mathfrak c}_{A^*}$ is 
 {\em additive} (that is, the norm on $B(A,\Rdb)$ is additive on ${\mathfrak c}^{\Rdb}_{A^*}$).
\item [(3)]  If $(\varphi_t)$ is an increasing net in 
${\mathfrak c}^{\Rdb}_{A^*}$ which is bounded in norm,
then the net converges in norm, and its limit is the least upper 
bound of the net.  \end{itemize} 
\end{lemma}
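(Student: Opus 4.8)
The plan is to reduce all three parts to a single norm formula for the cone, namely that
\[
\Vert \varphi \Vert \; = \; \varphi^{**}(1_{A^{**}}) \; = \; \lim_s \varphi(e_s)
\]
for every $\varphi \in {\mathfrak c}^{\Rdb}_{A^*}$ and any cai $(e_s)$, where $\varphi^{**}$ is the weak*-continuous real-linear extension to $A^{**}$. To establish this I would invoke Lemma \ref{dualc} to write $\varphi = t\, {\rm Re}(\psi)$ with $\psi \in S(A)$ and $t \geq 0$. The standard fact that the real part of a complex-linear functional has the same norm gives $\Vert {\rm Re}(\psi) \Vert = \Vert \psi \Vert = 1$, so $\Vert \varphi \Vert = t$; and since a state satisfies $\psi(e_s) \to 1$, hence $\psi^{**}(1_{A^{**}}) = 1$, we get $\varphi^{**}(1_{A^{**}}) = t\, {\rm Re}(\psi^{**}(1_{A^{**}})) = t$. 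I would also record that ${\mathfrak c}^{\Rdb}_{A^*}$ is norm-closed, being the intersection over $a \in {\mathfrak r}_A$ of the half-spaces $\{\varphi : \varphi(a) \geq 0\}$.

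Part (2) is then immediate: if $\varphi, \phi \in {\mathfrak c}^{\Rdb}_{A^*}$ then so is $\varphi + \phi$, and since $(\cdot)^{**}(1_{A^{**}})$ is additive, $\Vert \varphi + \phi \Vert = (\varphi + \phi)^{**}(1_{A^{**}}) = \varphi^{**}(1_{A^{**}}) + \phi^{**}(1_{A^{**}}) = \Vert \varphi \Vert + \Vert \phi \Vert$. For part (1), I would set $p = g - f$ and $q = h - g$, both in ${\mathfrak c}^{\Rdb}_{A^*}$ by hypothesis, and use the symmetric decomposition $g = \frac12(f + h) + \frac12(p - q)$. The triangle inequality gives $\Vert g \Vert \leq \frac12 \Vert f + h \Vert + \frac12 \Vert p - q \Vert$, with $\Vert f + h \Vert \leq 2\max\{\Vert f \Vert, \Vert h \Vert\}$, while $\Vert p - q \Vert \leq \Vert p \Vert + \Vert q \Vert = \Vert p + q \Vert = \Vert h - f \Vert \leq 2\max\{\Vert f \Vert, \Vert h \Vert\}$, the middle equality being the additivity from part (2). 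Adding yields the claimed bound. (Note that the cruder estimate $\Vert g \Vert \leq \Vert f \Vert + \Vert p \Vert$ only gives a factor of $3$, which is why the symmetric decomposition is the right move.)

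Part (3) uses the formula to linearize the net. For $s \leq t$ the difference $\varphi_t - \varphi_s$ lies in the cone, so $\Vert \varphi_t - \varphi_s \Vert = \varphi_t^{**}(1_{A^{**}}) - \varphi_s^{**}(1_{A^{**}})$; in particular the real net $(\varphi_t^{**}(1_{A^{**}})) = (\Vert \varphi_t \Vert)$ is increasing and bounded above, hence convergent and Cauchy. Comparing any two indices through a common upper bound (directedness of the index set), this forces $(\varphi_t)$ to be norm-Cauchy in the Banach space $B(A,\Rdb)$, so it converges in norm to some $\varphi_\infty$. Finally, since ${\mathfrak c}^{\Rdb}_{A^*}$ is norm-closed, passing to the limit in the relations $\varphi_\infty - \varphi_s \in {\mathfrak c}^{\Rdb}_{A^*}$ (fixing $s$ and letting $t \to \infty$) and $\chi - \varphi_s \in {\mathfrak c}^{\Rdb}_{A^*}$ (for any competing upper bound $\chi$) shows $\varphi_\infty$ is the least upper bound.

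I expect the only genuine obstacle to be the norm formula itself, and within it the inequality $\Vert \varphi \Vert \leq \varphi^{**}(1_{A^{**}})$, which is where positivity is truly used. With the representation $\varphi = t\, {\rm Re}(\psi)$ from Lemma \ref{dualc} in hand this becomes painless, but absent that one would have to run a Cauchy--Schwarz argument for the extended positive functional in a containing $C^*$-algebra. Everything downstream is routine ordered-Banach-space bookkeeping.
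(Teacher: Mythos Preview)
Your proposal is correct. The norm formula $\Vert \varphi \Vert = \varphi^{**}(1_{A^{**}})$ is exactly what the paper also isolates in proving (2), though the paper reaches it by extending to a positive functional on $C^*(A)$ rather than by invoking Lemma~\ref{dualc} and the real/complex norm equality; the two routes are interchangeable.

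Where you genuinely diverge is in (1) and (3). The paper does \emph{not} derive (1) from (2): instead it appeals to Theorem~\ref{dualcor} (that ${\mathfrak r}_A$ is a generating cone) and then quotes the general ordered-Banach-space duality between generating cones and normal dual cones from \cite[Theorem~1.5]{AE}. Likewise (3) is simply a citation of \cite[Proposition~3.2]{AE}. Your arguments, by contrast, are self-contained: the symmetric decomposition $g = \tfrac12(f+h) + \tfrac12(p-q)$ together with additivity is a clean elementary proof of (1), and your Cauchy argument for (3) from the norm formula is direct. The trade-off is that the paper's route is shorter on the page and situates the result in the standard ordered-space framework, while yours avoids external machinery entirely and makes the constant $2$ in (1) visibly sharp via the computation; note also that your ordering of the proof makes (1) depend on (2), whereas the paper keeps them independent.
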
  \begin{proof}   (1) \ This may be deduced e.g.\
from  Theorem \ref{dualcor} by \cite[Theorem 1.5]{AE}.

(2) \ If $\psi$ is a positive map on $C^*(A)$ and $(e_t)$ is a cai for $A$ then then 
$$\Vert \psi \Vert = \lim_t \psi(e_t)  = \lim_t \, {\rm Re} \, \psi(e_t)
\leq \Vert {\rm Re} \, \psi_{\vert A} \Vert \leq \Vert \psi_{\vert A} \Vert 
\leq \Vert \psi \Vert.$$
Hence $\Vert \varphi \Vert = \langle 1 , \varphi \rangle$ for all
$\varphi  \in {\mathfrak c}^{\Rdb}_{A^*}$, and it follows
that the norm on $B(A,\Rdb)$ is additive on ${\mathfrak c}^{\Rdb}_{A^*}$. 

(3) \ Follows from \cite[Proposition 3.2]{AE}.
\end{proof}  

{\bf Remarks.}  1) \   In the language of ordered Banach spaces, the norm estimate in Lemma \ref{dualc2} (1)
  is saying that 
 ${\mathfrak c}_{A^*}$ is a (2-){\em normal cone}.  The best constant here is $2$; it 
is not hard to show there exist 
approximately unital algebras $A$ with ${\mathfrak c}^{\Rdb}_{A^*}$ 
not $C$-normal for any $C < 2$.
Indeed if it was, then ${\mathfrak r}_{A}$ would be $C'$-directed 
for some $C' < 2$ by \cite[Theorem 1.5]{AE}.
However if $A = A(\Ddb)$, the  disk algebra, it is easy to see that if $z = f-g$
for $f, g \in A$ having positive real part, then Re$(f(1)) \geq 1$.
Hence $\Vert f \Vert \geq 1$.  Similarly, $\Vert g \Vert \geq 1$, 
so that $\Vert f \Vert + \Vert g \Vert \geq 2$.

\smallskip

2) \ It is probably never  true for 
an approximately unital operator algebra $A$ 
that 
$B(A,\Rdb) = {\mathfrak c}^{\Rdb}_{A^*} - {\mathfrak c}^{\Rdb}_{A^*}$.
Indeed, in the case $A = \Cdb$ the latter space has  real dimension $1$.
However the complex span of the (usual) states of an
 approximately unital operator algebra $A$ is $A^*$ (the complex dual space).  
Indeed by a result of Moore \cite{Moore,AE2}, the 
complex span of the states of any unital Banach algebra $A$ is $A^*$.
In the approximately unital operator algebra case one can prove this same fact by 
a reduction to the unital case by noting that by the argument in the second last 
paragraph of \cite[Section 4]{BRII}, the span of the states on $A$ is
the span of the restrictions to $A$ of the states on $A^1$.

\smallskip

3)   Every element $x \in \frac{1}{2} 
{\mathfrak F}_A$ need not achieve its norm at a state, even in $M_2$ (consider 
$x = (I + E_{12})/2$ for example).

\begin{corollary}  \label{rsaj}  The real states on an approximately unital operator algebra
$A$ are just the real parts of ordinary states on $A$. \end{corollary}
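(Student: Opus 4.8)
The plan is to prove the two inclusions separately, with essentially all of the analytic work already packaged in Lemmas \ref{duals} and \ref{dualc}; the only genuinely new thing to verify is a normalization. First I would dispose of the easy inclusion. Starting from an ordinary state $\psi \in S(A)$, I would check directly that $\varphi := {\rm Re}(\psi)$ is a real state: it is plainly $\Rdb$-linear and $\Rdb$-valued, it is contractive since $|{\rm Re}(\psi(a))| \leq |\psi(a)| \leq \Vert a \Vert$ for $a \in A$, and for any cai $(e_s)$ of $A$ we have $\psi(e_s) \to 1$, whence $\varphi(e_s) = {\rm Re}(\psi(e_s)) \to 1$. Thus ${\rm Re}(\psi)$ satisfies the definition of a real state.

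For the reverse inclusion, I would take an arbitrary real state $\varphi$ on $A$. By Lemma \ref{duals}, $\varphi$ lies in the real dual cone ${\mathfrak c}^{\Rdb}_{A^*}$, and Lemma \ref{dualc} then supplies the concrete description $\varphi = t\, {\rm Re}(\psi)$ for some $\psi \in S(A)$ and some scalar $t \in [0,\infty)$. At this stage $\psi$ is already a bona fide state (its being norm one is built into $S(A)$), so the entire reverse inclusion reduces to showing that the scalar $t$ equals $1$.

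Pinning down $t$ is the one point requiring care, and it is really the only obstacle in the argument. I would evaluate the identity $\varphi(e_s) = t\, {\rm Re}(\psi(e_s))$ along a cai $(e_s)$ for $A$: since $\varphi$ is a real state we have $\varphi(e_s) \to 1$, and since $\psi$ is a state we have $\psi(e_s) \to 1$ and hence ${\rm Re}(\psi(e_s)) \to 1$. Passing to the limit forces $1 = t \cdot 1$, so $t = 1$ and therefore $\varphi = {\rm Re}(\psi)$. (In particular $t \neq 0$, so there is no degenerate case to worry about.) Combining the two inclusions yields the stated equality between real states and real parts of ordinary states.
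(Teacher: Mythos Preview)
Your proof is correct and follows essentially the same approach as the paper, invoking Lemmas \ref{duals} and \ref{dualc} to write a real state $\varphi$ as $t\,{\rm Re}(\psi)$ with $\psi \in S(A)$. The only difference is in the normalization step: the paper appeals to the fact $\Vert {\rm Re}\,\psi\Vert = 1$ established in the proof of Lemma \ref{dualc2}, whereas you evaluate both sides on a cai---an equally valid and arguably more self-contained way to force $t=1$.
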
   \begin{proof} Certainly 
the real part of an ordinary state is a real state.  If 
$\varphi$ is a real state on $A$, then by Lemma \ref{duals} and  Lemma \ref{dualc}
we have $\varphi = t {\rm Re} \, \psi$ for a state $\psi$ on $A$ which is the 
restriction of a state on $C^*(A)$.   
In the proof of Lemma \ref{dualc2} we saw that $\Vert {\rm Re} \, \psi \Vert = 1$,
so that $t = 1$.  
  \end{proof}  

\begin{lemma}  \label{uext}  Any real state on an approximately unital closed 
subalgebra $A$ of an approximately unital  operator algebra $B$ extends 
to a real state on $B$.  If $A$ is a HSA in $B$ then this extension is unique.
\end{lemma}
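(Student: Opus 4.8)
The plan is to reduce everything to ordinary (complex) states and then invoke Corollary \ref{rsaj}, which identifies the real states on an approximately unital operator algebra with the real parts of its states, together with the silent fact recorded just before Lemma \ref{dualc2}: two complex-linear functionals on $A$ with the same real part coincide on $A$. Consequently a real state $\varphi$ on $A$ extends to a real state on $B$ (uniquely) if and only if the associated state $\psi$ on $A$ with $\varphi = {\rm Re}(\psi)$ extends to a state on $B$ (uniquely), so I would carry out the argument at the level of $\psi$ throughout, passing back to $\varphi$ at the end by taking real parts.

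For existence, I would first extend $\psi$ to a state $\hat{\psi}$ on the unitization $A^1$, and realize $A^1 \subset B^1$ as unital subalgebras sharing the common identity $1$. A Hahn-Banach extension $\Phi$ of $\hat{\psi}$ to $B^1$ has $\Vert \Phi \Vert = \Vert \hat{\psi} \Vert = 1$ and $\Phi(1) = 1$, hence is a state on the unital operator algebra $B^1$. Its restriction $\Phi|_B$ is then a state on $B$: it is contractive, it extends to the state $\Phi$ on $B^1$, and it satisfies $\Phi(e_t) = \psi(e_t) \to 1$ along a cai $(e_t)$ of $A \subseteq B$, which forces $\Vert \Phi|_B \Vert = 1$. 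Finally ${\rm Re}(\Phi|_B)$ is a real state on $B$ (the real part of a state), and its restriction to $A$ is ${\rm Re}(\psi) = \varphi$, so it extends $\varphi$. This half is essentially mechanical.

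For uniqueness when $A$ is a HSA in $B$, I would use the open projection picture from \cite{BHN}: if $(e_t)$ is a cai of $A$ then $e_t \to p$ weak*, where $p = 1_{A^{**}}$ is the open projection of $A$ in $B^{**}$, and $A^{**} = p B^{**} p$. Let $\psi_1, \psi_2$ be states on $B$ with ${\rm Re}(\psi_i)|_A = \varphi$; by the silent fact above, $\psi_1 = \psi_2$ on $A$. I would extend each $\psi_i$ to a state on $C^*(B)$ and then to a normal state $\Psi_i$ on the von Neumann algebra $M = C^*(B)^{**} \supseteq B^{**}$. Since ${\rm Re}(\psi_i(e_t)) = \varphi(e_t) \to 1$ while $|\psi_i(e_t)| \leq 1$, we get $\psi_i(e_t) \to 1$, and hence $\Psi_i(p) = \lim_t \Psi_i(e_t) = 1$ by normality. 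As $\Psi_i$ is a state and $p$ a projection with $\Psi_i(p) = 1 = \Vert \Psi_i \Vert$, a Cauchy-Schwarz argument yields $\Psi_i(x) = \Psi_i(pxp)$ for all $x \in M$; in particular $\psi_i(b) = \Psi_i(pbp)$ for $b \in B$, with $pbp \in p B^{**} p = A^{**}$.

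It then remains to see that $\Psi_1$ and $\Psi_2$ agree on $A^{**}$: each $\Psi_i$ is weak*-continuous and agrees with $\psi_i$ on $A$, and $A$ is weak*-dense in $A^{**}$, so $\psi_1 = \psi_2$ on $A$ forces $\Psi_1 = \Psi_2$ on $A^{**}$. Therefore $\psi_1(b) = \Psi_1(pbp) = \Psi_2(pbp) = \psi_2(b)$ for every $b \in B$, giving $\psi_1 = \psi_2$ and hence $\varphi_1 = \varphi_2$. I expect the main obstacle to be precisely this uniqueness argument — namely verifying cleanly that the extended states are supported on the open projection $p$ (so that the relevant compression $pbp$ lands in $A^{**}$), and that normality together with weak*-density of $A$ in $A^{**}$ propagates the agreement on $A$ up to all of $A^{**}$; the existence half is comparatively routine Hahn-Banach through the unitizations.
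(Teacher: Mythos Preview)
Your proof is correct and follows essentially the same route as the paper: reduce to complex states via Corollary~\ref{rsaj} and the ``silent fact'' about real parts, then extend by Hahn--Banach through the unitizations for existence, and use the HSA/open-projection structure for uniqueness. The only difference is that the paper invokes \cite[Theorem 2.10]{BHN} as a black box for the uniqueness of complex state extensions from an HSA, whereas you unpack that citation by passing to normal extensions on $C^*(B)^{**}$, showing they are supported on the open projection $p$, and using weak*-density of $A$ in $A^{**}=pB^{**}p$; this is precisely the argument behind the cited result.
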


\begin{proof}
The first part f
is as in \cite[Proposition 3.1.6]{Ped}.
Suppose that  $A$ is a HSA in $B$ and that $\varphi_1, \varphi_2$ 
are real states  on $B$ extending a real state on $A$.  By the above we may
write $\varphi_i = {\rm Re} \, \psi_i$ for ordinary states  on $B$.
Since $\varphi_1 = \varphi_2$ on $A$ we have 
$\psi_1 = \psi_2$ on $A$.  Hence $\psi_1 = \psi_2$ on $B$ by 
\cite[Theorem 2.10]{BHN}.  So $\varphi_1 = \varphi_2$ on $B$.
  \end{proof}

\begin{corollary} \label{dualco1}  
Let $A$ be an approximately unital operator algebra.
The second dual cone of ${\mathfrak r}_A$ 
(that is, the (real)  dual cone of ${\mathfrak c}^{\Rdb}_{A^*}$) is ${\mathfrak r}_{A^{**}}$.
The (real) predual cone of the dual cone ${\mathfrak c}^{\Rdb}_{A^*}$
is ${\mathfrak r}_A$.
\end{corollary}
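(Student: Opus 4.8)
The plan is to recognize both assertions as instances of the bipolar theorem for closed convex cones, applied in two different dual pairs. Throughout I would work with the real Banach space $X = A$ regarded over $\Rdb$, its real dual $X^* = B(A,\Rdb)$, and the real bidual $X^{**}$; the cone ${\mathfrak r}_A$ is a closed convex cone in $X$ (closedness and convexity were noted at the start of Section 2), and ${\mathfrak c}^{\Rdb}_{A^*}$ is by definition its dual cone in $X^*$. The only preliminary bookkeeping is to fix the standard $\Rdb$-linear isometric identification of the real dual $B(A,\Rdb)$ with the complex dual $A^*$ via $\psi \mapsto {\rm Re}\,\psi$ (with inverse $\varphi \mapsto \tilde\varphi$, where $\tilde\varphi(a) = \varphi(a) - i\varphi(ia)$, as used in the proof of Lemma \ref{dualc}), together with the analogous identification of $X^{**}$ with $A^{**}$ coming from \cite{Li}. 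Under these identifications the real weak topology $\sigma(X,X^*)$ agrees with $\sigma(A,A^*)$ and the real weak* topology $\sigma(X^{**},X^*)$ agrees with the weak* topology $\sigma(A^{**},A^*)$ on $A^{**}$, since multiplying a functional by $i$ shows that convergence of all real parts is equivalent to weak, respectively weak*, convergence.

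For the predual-cone assertion I would compute the set $\{a \in A : \varphi(a) \geq 0 \text{ for all } \varphi \in {\mathfrak c}^{\Rdb}_{A^*}\}$, which is exactly the bipolar of ${\mathfrak r}_A$ formed back inside $X$. The bipolar theorem for cones gives that this equals $\overline{{\mathfrak r}_A}^{\sigma(X,X^*)}$, the weak closure; and since ${\mathfrak r}_A$ is a norm-closed convex set, its weak closure is itself, which is the desired conclusion. The one point needing care is the cone version of Hahn--Banach separation: if $a_0$ lies outside the closed convex cone ${\mathfrak r}_A$, a separating real functional $\varphi$ can be normalized, using that ${\mathfrak r}_A$ is a cone containing $0$, so as to satisfy $\varphi \geq 0$ on ${\mathfrak r}_A$ (hence $\varphi \in {\mathfrak c}^{\Rdb}_{A^*}$) while $\varphi(a_0) < 0$, which certifies that $a_0$ is not in the predual cone.

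For the second-dual-cone assertion I would run the same bipolar argument one level up, in the dual pair $(X^*, X^{**})$. The dual cone of ${\mathfrak c}^{\Rdb}_{A^*} = ({\mathfrak r}_A)^\circ$, formed in $X^{**}$, is by the bipolar theorem the $\sigma(X^{**},X^*)$-closure of the canonical image of ${\mathfrak r}_A$: indeed this double dual cone is weak*-closed and contains (the image of) ${\mathfrak r}_A$, since $\langle \varphi, \hat a\rangle = \varphi(a) \geq 0$ for $a \in {\mathfrak r}_A$ and $\varphi \in {\mathfrak c}^{\Rdb}_{A^*}$, while any point outside the weak* closure is separated by an element of $X^*$ that the cone structure again forces into ${\mathfrak c}^{\Rdb}_{A^*}$. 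Transporting to $A^{**}$ via the identifications above, this closure is precisely the weak* closure of ${\mathfrak r}_A$ in $A^{**}$, which by \cite[Corollary 3.6]{BRII} equals ${\mathfrak r}_{A^{**}}$.

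I expect the main obstacle to be purely the bookkeeping of keeping the real and complex duals, and their weak and weak* topologies, correctly aligned under the identifications above, together with the explicit normalization in the cone separation step; there is no substantive analytic difficulty, since the essential nontrivial input --- that the weak* closure of ${\mathfrak r}_A$ is ${\mathfrak r}_{A^{**}}$ --- is already supplied by \cite[Corollary 3.6]{BRII}.
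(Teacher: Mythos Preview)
Your proof is correct and takes a genuinely different route from the paper's. The paper argues directly via Lemma~\ref{dualc}: since ${\mathfrak c}^{\Rdb}_{A^*}$ consists precisely of the restrictions to $A$ of real parts of positive functionals on $C^*(A)$, an element $a$ lies in the predual cone iff $\varphi(a+a^*)\geq 0$ for every state $\varphi$ on $C^*(A)$, which is the state characterization of $a+a^*\geq 0$; the same computation is then run with $a\in A^{**}$ and $C^*(A)^{**}$ to obtain the second assertion. Your argument instead recognizes both statements as instances of the bipolar theorem for closed convex cones, reducing the second-dual-cone assertion to the known fact (\cite[Corollary 3.6]{BRII}) that the weak* closure of ${\mathfrak r}_A$ in $A^{**}$ is ${\mathfrak r}_{A^{**}}$, and the predual-cone assertion to the norm-closedness of ${\mathfrak r}_A$. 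The paper's approach has the advantage of being essentially self-contained once Lemma~\ref{dualc} is in hand, and it makes the $C^*$-algebraic content explicit; your approach is cleaner conceptually and shows that nothing beyond abstract duality and the weak*-density result from \cite{BRII} is really needed. Your handling of the real/complex duality bookkeeping is accurate and the cone-separation normalization is correctly explained.
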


\begin{proof}  We use Lemma \ref{dualc}.  Suppose that $a \in A$ with ${\rm Re} \, \varphi(a) \geq 0$
for all states $\varphi$ on $C^*(A)$.   Then $\varphi(a + a^*) \geq 0$ 
for all states $\varphi$, so that $a + a^* \geq 0$.   

A similar proof works if $a \in A^{**}$ to yield the first assertion.
Indeed if Re$ \, a(\varphi) \geq 0$ for all states $\varphi$ on $A$,  
or equivalently if $(a + a^*)(\varphi) \geq 0$ for all states $\varphi$ on $C^*(A)$, then 
$a + a^* \geq 0$ in $C^*(A)^{**}$, so that $a \in {\mathfrak r}_{A^{**}}$. 
\end{proof}

 It follows immediately from Theorem \ref{dualcor} that any approximately 
unital operator algebra  
$A$ is a directed set.  That is, if $x, y \in A$ then there
exists $z \in A$ with $z-x, z-y \in {\mathfrak r}_A$.  In fact more is true.
We recall that the positive part of the  open unit ball of a $C^*$-algebra
is a directed set.  The following generalizes this to operator algebras:
  
\begin{corollary} \label{dirset} If $A$ is an approximately
unital operator algebra then the open unit ball of $A$ 
is a directed set with respect to the ${\mathfrak r}$-ordering.  That is, 
if $x, y \in A$ with $\Vert x \Vert , \Vert y  \Vert < 1$, 
then there 
exists $z \in A$ with $\Vert z\Vert < 1$ and  $z-x, z-y \in {\mathfrak r}_A$.  
 \end{corollary}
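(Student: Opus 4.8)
The plan is to reduce the statement to the decomposition already available from Theorem~\ref{dualcor}, combined with the directedness of the cone $\frac{1}{2}{\mathfrak F}_A$ and a careful control of norms. Given $x,y\in A$ with $\Vert x\Vert,\Vert y\Vert<1$, my first move is to locate a single element $w\in{\mathfrak r}_A$ that dominates both $x$ and $y$ in the ${\mathfrak r}$-ordering; this is the content of directedness, which follows immediately from Theorem~\ref{dualcor} as noted in the paragraph just before the corollary. Concretely, writing $x=a_1-b_1$ and $y=a_2-b_2$ with $a_i,b_i\in\frac12{\mathfrak F}_A$, the element $b_1+b_2$ (or a suitable scaling thereof) gives a candidate in ${\mathfrak c}_A\subset{\mathfrak r}_A$ with $w-x$ and $w-y$ accretive. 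The genuine issue is that this naive dominator will not in general have norm less than $1$, so the problem is not directedness per se but directedness \emph{inside the open unit ball}.

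The key idea I would exploit is the convexity and absorbing structure of $\frac12{\mathfrak F}_A$ together with the sharpened form of Theorem~\ref{dualcor}, which asserts that when $\Vert x\Vert<1$ one can take the pieces $a,b$ themselves in $\frac12{\mathfrak F}_A$ (hence of norm at most $1$) with strict room to spare. Since $\frac12{\mathfrak F}_A = \{a\in A:\Vert 1-2a\Vert\le 1\}$, every element of $\frac12{\mathfrak F}_A$ already lies in $\mathrm{Ball}(A)$, and the real work is to produce a common dominator $z$ whose distance from $\frac12 1$ in the unitization is strictly less than $\frac12$, forcing $\Vert z\Vert<1$. I would write $x=a_1-b_1$, $y=a_2-b_2$ with all four terms in $\frac12{\mathfrak F}_A$, set $z = a_1+a_2 - (a_1+a_2)\wedge(\cdots)$ schematically, but more cleanly just take $z$ to be a convex combination or scaled sum of the $a_i$, arranged so that $z-x=b_1+(a_2-a_1)_+\in{\mathfrak r}_A$ etc. The cleanest route is probably to dominate by $\max$-type constructions in the commutative unital $C^*$-algebra generated by the relevant positive parts inside $C^*(A^1)$, then pull back; but since we only need membership in ${\mathfrak r}_A$ (a containing-algebra condition), it suffices that $z-x, z-y$ be accretive, which is easier than a genuine lattice supremum.

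Concretely, the route I would carry out is: (i) apply Theorem~\ref{dualcor} to obtain $x=a-b$, $y=c-d$ with $a,b,c,d\in\frac12{\mathfrak F}_A$; (ii) observe $z-x\in{\mathfrak r}_A$ as soon as $z-a+b\in{\mathfrak r}_A$, and similarly for $y$; (iii) seek $z$ of the form $\lambda(a+c)$ or $\lambda(b+d)+\text{something}$ and verify accretivity using that sums and nonnegative scalings of accretive elements are accretive (${\mathfrak r}_A$ is a cone); (iv) finally check the norm bound $\Vert z\Vert<1$ by keeping the scalar $\lambda$ strictly below the threshold, using convexity of $\frac12{\mathfrak F}_A$ as in the last lines of the proof of Theorem~\ref{dualcor}. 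The main obstacle I anticipate is precisely step (iv): controlling the norm of the dominator simultaneously with the ordering constraints, because the obvious dominator $b+d$ domination forces $z\ge$ two things at once and norms can add up to reach or exceed $1$. I expect the resolution is to not add the full pieces but to exploit the strict inequalities $\Vert x\Vert,\Vert y\Vert<1$ to build in an $\epsilon$ of slack, scaling down exactly as in the passage from $\Vert x\Vert=1$ to $\Vert x\Vert<1$ at the end of Theorem~\ref{dualcor}'s proof, so that the final $z$ lands in $(1-\delta)\cdot\frac12{\mathfrak F}_A$ for some $\delta>0$ and hence has norm strictly below $1$.
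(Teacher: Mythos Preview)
Your plan has a genuine gap at precisely the point you flag as the ``main obstacle,'' namely step~(iv).  The difficulty is not cosmetic: once you have written $x=a_1-b_1$ and $y=a_2-b_2$ with $a_i,b_i\in\tfrac12{\mathfrak F}_A$, the obvious common dominator $z=a_1+a_2$ (so that $z-x=a_2+b_1\in{\mathfrak r}_A$ and $z-y=a_1+b_2\in{\mathfrak r}_A$) can have norm as large as~$2$, and your proposed fix---scaling $z$ down by a factor close to~$1$ using the $\epsilon$ of slack from $\Vert x\Vert,\Vert y\Vert<1$---destroys the ordering.  If you replace $z$ by $\lambda z$ with $\lambda<1$, then $\lambda z-x=\lambda(a_2+b_1)-(1-\lambda)x$, and there is no reason for $-(1-\lambda)x$ to be accretive.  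Nor does forcing $z$ into $(1-\delta)\cdot\tfrac12{\mathfrak F}_A$ help: membership in $\tfrac12{\mathfrak F}_A$ bounds $\Vert z\Vert$ by~$1$ but also caps ${\rm Re}(z)$, while you simultaneously need ${\rm Re}(z)\geq{\rm Re}(x)$ and ${\rm Re}(z)\geq{\rm Re}(y)$, and these real parts may be close to~$1$.  None of the concrete candidates you suggest (``$\lambda(a+c)$ or $\lambda(b+d)+\text{something}$,'' pulling back a $\max$ from a containing $C^*$-algebra, etc.) come with an argument that they lie in~$A$, dominate both $x$ and $y$, \emph{and} have norm strictly below~$1$.

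The paper sidesteps this constructive difficulty entirely by passing to the dual.  Theorem~\ref{dualcor} is still the engine, but it is used (via \cite[Theorem 1.5]{AE}) to show in Lemma~\ref{dualc2}(1) that the dual cone ${\mathfrak c}^{\Rdb}_{A^*}$ is $2$-normal: $f\le g\le h$ forces $\Vert g\Vert\le 2\max\{\Vert f\Vert,\Vert h\Vert\}$.  The passage from normality of the dual cone to directedness of the open unit ball of~$A$ is then a general ordered-Banach-space fact, namely \cite[Corollary 3.6]{AE}, whose proof is a Hahn--Banach separation argument rather than an explicit construction of~$z$.  So the missing idea in your approach is precisely this duality step; a direct elementary construction of $z$ with all three properties does not seem to be available, and the paper does not attempt one.
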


\begin{proof}  This follows from Lemma \ref{dualc2} and \cite[Corollary 3.6]{AE}.
\end{proof}
 
For a $C^*$-algebra $B$, a natural ordering on the  open unit ball of $B$ 
turns the latter into a net which is a positive cai for $B$ (see e.g.\ \cite{Ped}).
   We do not know if 
something similar is true for general operator algebras via  Corollary \ref{dirset}.

\section{Some results on positivity in operator algebras} \label{morp} 

In this section we collect several useful facts concerning our  positivity, some of which are used in this 
paper, and some in forthcoming work.  

\subsection{The ${\mathfrak F}$-transform}

In \cite{BRII} the sets  $\frac{1}{2}{\mathfrak F}_{A}$ and ${\mathfrak r}_{A}$ were related by a certain 
transform.  We now establish a few more basic properties of this transform. 
The Cayley transform $\kappa(x) = (x-I)(x+I)^{-1}$ 
of an accretive $x \in A$ exists since $-1 \notin {\rm Sp}(x)$, and
is well known to be a contraction.  Indeed it is well known (see e.g.\ \cite{NF}) 
that if $A$ is unital then the Cayley transform
maps ${\mathfrak r}_A$ bijectively onto the set of contractions in $A$ whose 
spectrum does not contain $1$, and the inverse transform is
$T \mapsto (I+T) (I - T)^{-1}$.  The Cayley transform maps the accretive elements $x$
with ${\rm Re}(x) \geq \epsilon 1$ for some $\epsilon > 0$, onto the set 
of elements $T \in A$ with $\Vert T \Vert < 1$ (see e.g.\ 2.1.14 in \cite{BLM}). 
 The ${\mathfrak F}$-transform ${\mathfrak F}(x) = 1 - (x+1)^{-1} = x (x+1)^{-1}$ may be written
as ${\mathfrak F}(x) = \frac{1}{2} (1 + \kappa(x))$.  Equivalently,
 $\kappa(x) = -(1 - 2 {\mathfrak F}(x))$.   

\begin{lemma}   For any operator algebra $A$, the  ${\mathfrak F}$-transform 
 maps ${\mathfrak r}_{A}$ bijectively onto the set of elements of $\frac{1}{2}{\mathfrak F}_{A}$
of norm $< 1$.   \end{lemma}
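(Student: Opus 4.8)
The plan is to leverage the relation ${\mathfrak F}(x) = \frac{1}{2}(1 + \kappa(x))$ already recorded above, so that all three assertions (into $\frac{1}{2}{\mathfrak F}_A$, injectivity, and identification of the image) reduce to known facts about the Cayley transform together with one norm estimate. First, the ``into'' part is immediate: since $\kappa$ is a contraction on the accretive elements, $\Vert 1 - 2{\mathfrak F}(x) \Vert = \Vert \kappa(x) \Vert \leq 1$, so ${\mathfrak F}(x) \in \frac{1}{2}{\mathfrak F}_A$. Injectivity of ${\mathfrak F}$ on ${\mathfrak r}_A$ follows because ${\mathfrak F}$ is an affine function of $\kappa$, and $\kappa$ is a bijection from ${\mathfrak r}_{A^1}$ onto the contractions with $1 \notin {\rm Sp}$; one should note here that ${\mathfrak F}(x) = 1 - (x+1)^{-1} \in A$ (not merely $A^1$), which is checked as in \cite[Section 3]{BRII}.

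The technical heart of the argument, and the step I expect to be the main obstacle, is upgrading $\Vert {\mathfrak F}(x) \Vert \leq 1$ to the \emph{strict} bound $\Vert {\mathfrak F}(x) \Vert < 1$ for every $x \in {\mathfrak r}_A$. I would compute this directly in a containing $B(H)$. Writing $y = (x+1)^{-1}$ so that ${\mathfrak F}(x) = 1 - y$, the plan is to verify the identity
$$y + y^* - y^* y = (1+x^*)^{-1}(1 + x + x^*)(1+x)^{-1}.$$
Since $x$ is accretive we have $x + x^* \geq 0$, hence $1 + x + x^* \geq 1$, and the right-hand side is therefore bounded below by $(1+x^*)^{-1}(1+x)^{-1} \geq \Vert 1 + x \Vert^{-2} \, 1 > 0$. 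Because $(1-y)^*(1-y) = 1 - (y + y^* - y^* y)$, this yields $\Vert {\mathfrak F}(x) \Vert^2 \leq 1 - \Vert 1 + x \Vert^{-2} < 1$. The delicate point is precisely obtaining a positive \emph{gap} (the factor $\Vert 1+x\Vert^{-2}$) rather than just $\leq 1$, since a contraction can have norm close to $1$ even when the relevant spectrum avoids the boundary; the identity above is what makes the gap explicit.

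Finally, for surjectivity onto the norm-$<1$ part of $\frac{1}{2}{\mathfrak F}_A$, I would argue as follows. Given $a \in \frac{1}{2}{\mathfrak F}_A$ with $\Vert a \Vert < 1$, set $T = -(1 - 2a) = 2a - 1$, a contraction since $a \in \frac{1}{2}{\mathfrak F}_A$. Because $\Vert a \Vert < 1$ we have ${\rm Sp}(a) \subset \{ |z| < 1 \}$, so $1 \notin {\rm Sp}(a)$, equivalently $1 \notin {\rm Sp}(T)$; hence $T$ lies in the range of $\kappa$ and $x = (1+T)(1-T)^{-1}$ is accretive by the inverse Cayley transform. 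A short Neumann-series computation gives $x = a(1-a)^{-1} = \sum_{n \geq 1} a^n \in A$, so $x \in {\mathfrak r}_A$, and by construction $\kappa(x) = T$, whence ${\mathfrak F}(x) = \frac{1}{2}(1 + T) = a$. Combining this with the strict norm bound of the previous paragraph shows the image of ${\mathfrak F}$ is exactly $\{ a \in \frac{1}{2}{\mathfrak F}_A : \Vert a \Vert < 1 \}$, completing the proof.
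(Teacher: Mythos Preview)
Your proof is correct. The overall structure is close to the paper's, but the key step---showing $\Vert {\mathfrak F}(x)\Vert < 1$ strictly---is handled by a genuinely different and more elementary argument.

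The paper first establishes that ${\mathfrak F}$ maps ${\mathfrak r}_{A^1}$ bijectively onto $\{T\in\frac{1}{2}{\mathfrak F}_{A^1}: 1\notin{\rm Sp}(T)\}$ (verifying, dually to your identity, that $2\,{\rm Re}(T(I-T)^{-1})=(I-T^*)^{-1}(T+T^*-2T^*T)(I-T)^{-1}\geq 0$), and then invokes \cite[Proposition 3.7]{ABS} to identify this spectral set with the norm-$<1$ part of $\frac{1}{2}{\mathfrak F}_{A^1}$: if $\Vert {\mathfrak F}(x)\Vert =1$ then that result forces $1-\kappa(x)$ to be non-invertible, i.e.\ $1\in{\rm Sp}({\mathfrak F}(x))$. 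Your route bypasses the external citation entirely: the identity $y+y^*-y^*y=(1+x^*)^{-1}(1+x+x^*)(1+x)^{-1}$ combined with $x+x^*\geq 0$ gives the quantitative bound $\Vert {\mathfrak F}(x)\Vert^2\leq 1-\Vert 1+x\Vert^{-2}$, which is both self-contained and sharper. Your surjectivity argument (Neumann series $x=\sum_{n\geq 1}a^n\in A$ from $\Vert a\Vert<1$) is likewise more direct than the paper's detour through the spectral description. What the paper's approach buys is the intermediate characterization of the image as the $1\notin{\rm Sp}$ set, which may be of independent interest; what yours buys is an explicit gap estimate and no dependence on \cite{ABS}.
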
  \begin{proof}  
First assume that  $A$ is  unital. By the last equations  ${\mathfrak F}({\mathfrak r}_A)$ is 
contained in  the set of elements of $\frac{1}{2}{\mathfrak F}_{A}$
whose spectrum does not contain $1$.  
The inverse of the ${\mathfrak F}$-transform
on this domain is 
$T (I-T)^{-1}$.   To see for example that $T (I-T)^{-1} \in {\mathfrak r}_A$ if $T \in \frac{1}{2}{\mathfrak F}_{A}$
note that 2Re$(T (I-T)^{-1})$ equals
$$(I-T^*)^{-1}(T^*(I-T) + (I-T^*) T) (I-T)^{-1} = (I-T^*)^{-1}(T + T^* - 2T^* T) (I-T)^{-1}$$
which is positive since $T^* T$  is dominated by Re$(T)$ if $T \in \frac{1}{2}{\mathfrak F}_{A}$.  
 Hence for any (possibly nonunital) operator algebra $A$ the  ${\mathfrak F}$-transform 
 maps ${\mathfrak r}_{A^1}$ bijectively onto the set of elements of $\frac{1}{2}{\mathfrak F}_{A^1}$
whose spectrum does not contain $1$.   However this equals the set of elements of $\frac{1}{2}{\mathfrak F}_{A^1}$
of norm $< 1$.     Indeed if $\Vert  {\mathfrak F}(x) \Vert = 1$ then $\Vert  \frac{1}{2} (1 + \kappa(x)) \Vert = 1$, and so $1 - \kappa(x)$ 
is not invertible by \cite[Proposition 3.7]{ABS}.   Hence $1 \in  {\rm Sp}_{A^1}(\kappa(x))$ and $1 \in  {\rm Sp}_A({\mathfrak F}(x))$.
Since ${\mathfrak F}(x) \in A$ iff $x \in A$, we are done.   \end{proof}

Thus in some sense we can identify ${\mathfrak r}_{A}$  with the strict contractions in $\frac{1}{2}{\mathfrak F}_{A}$.  This for example induces an order on this set of strict contractions.

\subsection{Roots of accretive elements}

\begin{lemma} \label{rootf}   Let $A$ be an operator algebra, and $x \in A$. 
\begin{itemize} \item [(1)]   If the numerical range of $x$ is contained in a sector
$S_{\rho}$ for $\rho < \frac{\pi}{2}$
 (see notation above
Lemma {\rm \ref{roots}}), then  $x/\Vert {\rm Re}(x) \Vert
\in \frac{\sec^2 \rho}{2} \, {\mathfrak F}_A$. So $x  \in {\mathfrak c}_A$.
\item [(2)]   
If $x \in {\mathfrak r}_A$ then 
$x^\alpha \in {\mathfrak c}_A$ for any $\alpha \in (0,1)$.  \end{itemize}
\end{lemma}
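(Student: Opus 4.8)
The plan is to prove (1) as a genuine operator inequality and then read off (2) as an immediate corollary.

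First I would translate the asserted membership $x/\Vert {\rm Re}(x)\Vert \in \frac{\sec^2 \rho}{2}\,{\mathfrak F}_A$ into something checkable. Write $R = {\rm Re}(x) = \frac12(x+x^*)$ and $c = \frac{\sec^2\rho}{2}$. Using that for $b \in A$ and $c > 0$ one has $b \in c\,{\mathfrak F}_A$ iff $\Vert 1 - b/c\Vert \le 1$ iff $(c-b)^*(c-b) \le c^2$ iff $b^*b \le c(b+b^*)$, the claim for $b = x/\Vert R\Vert$ becomes exactly
\[ x^*x \le \sec^2\rho\,\Vert R\Vert\,R. \]
Since ${\mathfrak c}_A = \Rdb_+ {\mathfrak F}_A$ is a cone, establishing $x/\Vert R\Vert \in \frac{\sec^2\rho}{2}{\mathfrak F}_A$ immediately gives $x \in {\mathfrak c}_A$ (the degenerate case $R = 0$ forces numerical range $\{0\}$, hence $x = 0$).

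The core step is the displayed inequality. I would first reduce to the case that $R$ is invertible: replacing $x$ by $x + \epsilon 1$ keeps the numerical range inside $S_\rho$, since $S_\rho$ is a convex cone and $\epsilon 1$ corresponds to $\epsilon \in S_\rho$, so the shifted range lies in $S_\rho + \epsilon \subseteq S_\rho$; this makes ${\rm Re}(x + \epsilon 1) = R + \epsilon$ invertible, and letting $\epsilon \to 0$ recovers the inequality for $x$. With $R$ invertible, set $w = R^{-1/2} x R^{-1/2}$. Then ${\rm Re}(w) = R^{-1/2} R R^{-1/2} = 1$, while the sectoriality hypothesis $|{\rm Im}\langle x\eta,\eta\rangle| \le \tan\rho\,{\rm Re}\langle x\eta,\eta\rangle$, valid for all $\eta$, applied to $\eta = R^{-1/2}\xi$ (for which ${\rm Re}\langle x\eta,\eta\rangle = \langle R\eta,\eta\rangle = \Vert\xi\Vert^2$) gives $|{\rm Im}\langle w\xi,\xi\rangle| \le \tan\rho$ for all unit $\xi$, that is $\Vert {\rm Im}(w)\Vert \le \tan\rho$. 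Hence $w = 1 + i\,{\rm Im}(w)$ is normal, with $\Vert w\Vert^2 = \Vert 1 + {\rm Im}(w)^2\Vert = 1 + \Vert {\rm Im}(w)\Vert^2 \le \sec^2\rho$. Since $x R^{-1/2} = R^{1/2} w$, this yields $\Vert x R^{-1/2}\Vert \le \Vert R\Vert^{1/2}\sec\rho$, and squaring gives $R^{-1/2} x^* x R^{-1/2} \le \sec^2\rho\,\Vert R\Vert$, which rearranges to the desired inequality.

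For (2), if $x \in {\mathfrak r}_A$ then $x$ is accretive, so by the facts recalled above Lemma \ref{roots} the root $x^\alpha$ is defined, lies in ${\rm oa}(x) \subseteq A$ by Lemma \ref{roots}(3), and has numerical range inside $S_{\alpha \frac{\pi}{2}}$. Since $\alpha\frac{\pi}{2} < \frac{\pi}{2}$ for $\alpha \in (0,1)$, part (1) applied to $x^\alpha$ with $\rho = \alpha\frac{\pi}{2}$ gives $x^\alpha \in {\mathfrak c}_A$. The main obstacle is entirely in (1): the operator $x^*x = R^2 + {\rm Im}(x)^2 + i[R,{\rm Im}(x)]$ carries a commutator term with no sign, so a naive quadratic-form estimate stalls. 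The conjugation trick that normalises ${\rm Re}(w)$ to the identity is what removes this difficulty and, because $w$ becomes normal, simultaneously produces the sharp constant $\sec^2\rho$ (sharpness can be verified on a $2\times 2$ example).
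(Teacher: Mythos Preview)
Your proof is correct and follows essentially the same route as the paper: both arrive at the inequality $x^*x \le \sec^2\rho\,\Vert R\Vert\,R$ by factoring $x = R^{1/2}(1+ic)R^{1/2}$ with $c$ selfadjoint and $\Vert c\Vert \le \tan\rho$, then using $\Vert 1+ic\Vert^2 = 1+\Vert c\Vert^2 \le \sec^2\rho$. The only difference is presentational: the paper obtains the factorization by citing \cite[Lemma~8.1]{BRI}, whereas you reduce to invertible $R$ via the perturbation $x \mapsto x+\epsilon 1$ and then read off $c = {\rm Im}(R^{-1/2}xR^{-1/2})$ directly, which is a bit more self-contained. Part~(2) is handled identically in both.
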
  \begin{proof}     (1) \ Write $x = a + ib$,
for positive $a$ and selfadjoint $b$ in a containing $B(H)$.  By the argument in the proof
of \cite[Lemma 8.1]{BRI},  there exists a selfadjoint $c \in B(H)$
with $b = a^{\frac{1}{2}} c a^{\frac{1}{2}}$ and $\Vert c \Vert \leq \tan \rho$.
Then $x = a^{\frac{1}{2}} (1 + ic) a^{\frac{1}{2}}$, and 
$$x^* x = a^{\frac{1}{2}} (1 + ic)^* a (1 + ic) a^{\frac{1}{2}} \leq
C a.$$
By the $C^*$-identity $\Vert (1 + ic)^* a (1 + ic) \Vert$ equals
$$\Vert a^{\frac{1}{2}} (1 + ic) (1 + ic)^* a^{\frac{1}{2}}  \Vert
\leq \Vert a \Vert (1 + \Vert c \Vert^2) \leq
 \Vert a \Vert (1 + \tan^2 \rho) = \Vert a \Vert  \sec^2 \rho.$$ 
So we can take $C = \Vert a \Vert \, \sec^2 \rho$.  Saying that 
$x^* x \leq
C {\rm Re}(x)$ is the same as saying that $x \in \frac{C}{2} {\mathfrak F}_A$.

(2) \ This follows from (1) since in this case the numerical range of $x^\alpha$ is contained in 
a sector $S_{\rho}$ with
$\rho  < \frac{\pi}{2}$.   \end{proof}  

{\bf Remark.}  The last result is related to the remark before \cite[Lemma 8.1]{BRI}.

\medskip

Of course $\Vert {\rm Im}(x^{\frac{1}{n}}) \Vert \to 0$ as $n \to \infty$, for $x \in {\mathfrak r}_A$ (as is clear e.g.\ from the above).
 
For $x \in {\mathfrak r}_A$, unlike the ${\mathfrak F}_A$ case,
we do not have $||x^{1/m}|| \leq ||x||^{1/m}$.  We are indebted to
Christian Le Merdy for the example
$$\left[ \begin{array}{ccl} 1 & i \\ i & 0 \end{array} \right].$$
Hence if $||x|| \leq 1$ we cannot say that $||x^{1/m}|| \leq 1$ always.
However we have:

\begin{lemma} \label{Bal}  If $||x|| \leq 1$ and $x \in {\mathfrak r}_A$, for an operator algebra $A$, 
 then $||x^{1/m}||
\leq \frac{m^2}{(m-1) \pi} \sin(\frac{\pi}{m}) \leq \frac{m}{m-1}$.
More generally, $||x^{\alpha}|| \leq \frac{\sin(\alpha \pi)}{\pi \alpha (1 - \alpha)}$
if $0 < \alpha < 1$.
Hence for any $\alpha \in (0,1)$ there is a $\delta > 0$ with
$\Vert x \Vert \leq \delta$ implying  $||x^{\alpha} || \leq 1$ for all $x \in {\mathfrak r}_A$.
\end{lemma}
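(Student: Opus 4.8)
The plan is to obtain the norm bound via the Balakrishnan integral representation for fractional powers of an accretive operator, which expresses $x^\alpha$ as an integral against the resolvent of $x$. For an accretive $x$ (so the numerical range lies in the right half-plane, and in particular $-1 \notin \mathrm{Sp}(x)$, indeed no strictly negative real number lies in $\mathrm{Sp}(x)$), the standard formula reads
\[
x^{\alpha} = \frac{\sin(\alpha \pi)}{\pi} \int_0^\infty t^{\alpha - 1} (t + x)^{-1} x \, dt, \qquad 0 < \alpha < 1 .
\]
First I would quote this representation from \cite{Haase} (it is the same tool invoked in the proof of Lemma \ref{roots}(1)), verifying that its hypotheses are met because $x \in {\mathfrak r}_A$ is accretive and $\Vert x \Vert \leq 1$.

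Next I would estimate the norm directly from the integral. The key point is that for an accretive operator, the resolvent satisfies $\Vert (t + x)^{-1} \Vert \leq 1/t$ for $t > 0$ (since $t + x$ has numerical range in the half-plane $\mathrm{Re}(z) \geq t$, so its inverse is a contraction times $1/t$). Combined with $\Vert x \Vert \leq 1$ this gives the two competing bounds $\Vert t^{\alpha-1}(t+x)^{-1} x \Vert \leq t^{\alpha - 1}$ (using $\Vert x \Vert \leq 1$ and $\Vert (t+x)^{-1}\Vert \leq 1/t \cdot t = 1$ appropriately) and $\Vert t^{\alpha -1}(t+x)^{-1} x\Vert \leq t^{\alpha - 2}$ (using $\Vert (t+x)^{-1}\Vert \leq 1/t$ and $\Vert x\Vert \leq 1$). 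Splitting the integral at $t = 1$ and using the first bound on $(0,1)$ and the second on $(1,\infty)$ yields
\[
\Vert x^{\alpha} \Vert \leq \frac{\sin(\alpha \pi)}{\pi}\left( \int_0^1 t^{\alpha - 1}\, dt + \int_1^\infty t^{\alpha - 2}\, dt \right) = \frac{\sin(\alpha \pi)}{\pi}\left( \frac{1}{\alpha} + \frac{1}{1 - \alpha}\right) = \frac{\sin(\alpha \pi)}{\pi\, \alpha (1-\alpha)},
\]
which is exactly the claimed general bound. The case $\alpha = 1/m$ then follows by substituting $\alpha = 1/m$ and simplifying: $\frac{\sin(\pi/m)}{\pi (1/m)(1 - 1/m)} = \frac{m^2}{(m-1)\pi}\sin(\frac{\pi}{m})$, and the further inequality $\leq \frac{m}{m-1}$ comes from $\sin(\pi/m) \leq \pi/m$.

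For the final "Hence" clause, I would combine the scaling law from Lemma \ref{roots}(1), namely $(cx)^\alpha = c^\alpha x^\alpha$, with the bound just proved. Given $\alpha \in (0,1)$ and any $x \in {\mathfrak r}_A$ with $\Vert x \Vert \leq \delta$, write $x = \delta y$ where $\Vert y \Vert \leq 1$ and $y \in {\mathfrak r}_A$ (note ${\mathfrak r}_A$ is a cone, so $y \in {\mathfrak r}_A$); then $\Vert x^\alpha \Vert = \delta^\alpha \Vert y^\alpha \Vert \leq \delta^\alpha \cdot \frac{\sin(\alpha\pi)}{\pi \alpha(1-\alpha)}$, and choosing $\delta$ small enough that $\delta^\alpha \frac{\sin(\alpha\pi)}{\pi\alpha(1-\alpha)} \leq 1$ finishes the argument.

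I expect the main obstacle to be justifying the resolvent estimate $\Vert (t+x)^{-1}\Vert \leq 1/t$ cleanly in the operator-algebra setting (as opposed to a Hilbert-space setting), and making sure the Balakrishnan integral converges and represents $x^\alpha$ in the precise normalization that produces the constant $\frac{\sin(\alpha\pi)}{\pi}$ rather than some variant. Both are standard facts about sectorial operators, but I would want to cite \cite{Haase} carefully and confirm that accretivity (numerical range in $S_{\pi/2}$) suffices; since our $x$ lies in a $C^*$-algebra $B(H)$ containing $A$, the resolvent bound can be verified there and the norm is the same, so this should present no real difficulty.
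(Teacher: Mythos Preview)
Your overall architecture matches the paper's proof exactly: the Balakrishnan representation, the split of the integral at $t=1$, and the evaluation $\frac{1}{\alpha} + \frac{1}{1-\alpha}$. The scaling argument for the ``Hence'' clause is also fine (the paper just says ``the rest is clear'').

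There is, however, a genuine gap in your estimate on $(0,1)$. You need $\Vert (t+x)^{-1} x \Vert \leq 1$ there, but your parenthetical ``using $\Vert x \Vert \leq 1$ and $\Vert (t+x)^{-1}\Vert \leq 1/t \cdot t = 1$ appropriately'' does not establish this: the resolvent bound $\Vert (t+x)^{-1} \Vert \leq 1/t$ combined with $\Vert x \Vert \leq 1$ only yields $\Vert (t+x)^{-1} x \Vert \leq 1/t$, which blows up as $t \to 0^+$ and would make $\int_0^1 t^{\alpha - 1}\Vert (t+x)^{-1} x\Vert\,dt$ diverge. The paper closes this gap by observing that
\[
(t+x)^{-1} x \;=\; \Big(1 + \tfrac{x}{t}\Big)^{-1}\,\tfrac{x}{t} \;=\; {\mathfrak F}\!\left(\tfrac{x}{t}\right),
\]
the ${\mathfrak F}$-transform of the accretive element $x/t$, which lies in $\frac{1}{2}{\mathfrak F}_A$ and hence has norm at most $1$ (equivalently, ${\mathfrak F}(y) = \tfrac{1}{2}(1+\kappa(y))$ with the Cayley transform $\kappa(y)$ a contraction for accretive $y$). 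Note this bound holds for \emph{any} accretive $x$ and does not use $\Vert x \Vert \leq 1$; that hypothesis is used only for the $t^{\alpha - 2}$ bound on $(1,\infty)$. Once you insert this observation, your argument is complete and identical to the paper's.
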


\begin{proof}
This follows from the well known A.V.\ Balakrishnan representation of powers
 $x^\alpha$ as $\frac{\sin(\alpha \pi)}{\pi} \int_0^\infty \, t^{\alpha - 1} \, (t + x)^{-1} x \, dt$
(see e.g.\ \cite{Haase}). 
If we use the simple fact that $\Vert (t + x)^{-1} \Vert \leq \frac{1}{t}$ for accretive operators $x$,
and
$$\Vert (t + x)^{-1} x \Vert = \Vert  (1 + \frac{x}{t})^{-1}  \frac{x}{t} \Vert
= \Vert {\mathfrak F}(\frac{x}{t}) \Vert \leq 1,$$
then 
the norm of $x^\alpha$ is dominated by
$$ \frac{\sin(\alpha \pi)}{\pi} (\int_0^1 t^{\alpha - 1} \, \cdot 1  dt + \int_1^\infty \, t^{\alpha - 1}
\frac{1}{t}  \, dt ) = \frac{\sin(\alpha \pi)}{\pi \alpha (1 - \alpha)}.$$
The rest is clear from this.
\end{proof}

\begin{lemma} \label{strsq}
If $\alpha \in (0,1)$ then there
exists a constant $K$ such that if $a, b \in  {\mathfrak r}_{B(H)}$
for a Hilbert space $H$, and $ab = ba$,  then
 $\Vert (a^{\alpha}  - b^\alpha) \zeta \Vert
\leq K \Vert (a-b) \zeta \Vert^\alpha$, for $\zeta \in H$.
    \end{lemma}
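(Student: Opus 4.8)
The plan is to combine the A.V.\ Balakrishnan representation used in Lemma~\ref{Bal} with the resolvent identity, using the hypothesis $ab = ba$ to pull the factor $a-b$ out of the integrand. Since $a$ and $b$ are accretive, $-t \notin {\rm Sp}(a) \cup {\rm Sp}(b)$ for every $t > 0$, so all the resolvents below exist and obey $\Vert (t+a)^{-1} \Vert \leq \frac{1}{t}$, as noted in the proof of Lemma~\ref{Bal}. Writing $(t+a)^{-1} a = 1 - t(t+a)^{-1}$ and similarly for $b$, and then applying the resolvent identity $(t+a)^{-1} - (t+b)^{-1} = (t+a)^{-1}(b-a)(t+b)^{-1}$, one finds $(t+a)^{-1}a - (t+b)^{-1}b = t\,(t+a)^{-1}(a-b)(t+b)^{-1}$. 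Feeding this into the Balakrishnan formula gives the operator identity
$$a^\alpha - b^\alpha = \frac{\sin(\alpha\pi)}{\pi} \int_0^\infty t^\alpha\, (t+a)^{-1}(a-b)(t+b)^{-1}\, dt.$$
Because $a$ and $b$ commute, the resolvents $(t+a)^{-1}$, $(t+b)^{-1}$ and the operator $a-b$ all commute with one another, so inside the integral I may move $a-b$ to the right and let it act first on $\zeta$.

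The naive estimate $\Vert (t+a)^{-1}(t+b)^{-1}(a-b)\zeta \Vert \leq t^{-2}\Vert (a-b)\zeta\Vert$ is not integrable near $t = 0$, so I would split the integral at a parameter $R > 0$ to be chosen. On the tail $[R,\infty)$ I use the factored form just described, bounding the integrand by $t^{\alpha - 2}\Vert (a-b)\zeta\Vert$ and integrating to obtain a contribution of size $\frac{R^{\alpha-1}}{1-\alpha}\Vert(a-b)\zeta\Vert$. On the head $(0,R]$ I instead keep the two terms separate and use that each of $(t+a)^{-1}a = {\mathfrak F}(a/t)$ and $(t+b)^{-1}b = {\mathfrak F}(b/t)$ is a contraction (as recorded in the proof of Lemma~\ref{Bal}), yielding a contribution of size at most $\frac{2 R^\alpha}{\alpha}\Vert\zeta\Vert$. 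Together these give
$$\Vert (a^\alpha - b^\alpha)\zeta\Vert \leq \frac{\sin(\alpha\pi)}{\pi}\Bigl( \frac{2 R^\alpha}{\alpha}\Vert\zeta\Vert + \frac{R^{\alpha-1}}{1-\alpha}\Vert(a-b)\zeta\Vert \Bigr).$$

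Finally I would optimize over $R$. Balancing the two terms, i.e.\ taking $R$ a fixed multiple of $\Vert(a-b)\zeta\Vert / \Vert\zeta\Vert$, makes each summand of order $\Vert(a-b)\zeta\Vert^\alpha \Vert\zeta\Vert^{1-\alpha}$ and produces a constant $K$ depending only on $\alpha$ with $\Vert(a^\alpha - b^\alpha)\zeta\Vert \leq K \Vert(a-b)\zeta\Vert^\alpha\Vert\zeta\Vert^{1-\alpha}$. For $\Vert\zeta\Vert \leq 1$ this is exactly the asserted bound (and a scaling check in $\zeta$ shows that the factor $\Vert\zeta\Vert^{1-\alpha}$ cannot be dropped in general, so the statement is understood for $\zeta$ in the unit ball). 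The degenerate cases are immediate: if $\zeta = 0$ there is nothing to prove, while if $(a-b)\zeta = 0$ the integrand in the factored representation vanishes identically, whence $(a^\alpha - b^\alpha)\zeta = 0$. I expect the main obstacle to be the non-integrability of the factored integrand at $t = 0$ --- which is precisely what forces the split-and-optimize and the appearance of the $\Vert\zeta\Vert^{1-\alpha}$ factor --- together with the routine but necessary verification that the Balakrishnan integral and the resolvent manipulations remain valid for merely accretive, possibly non-invertible $a$ and $b$.
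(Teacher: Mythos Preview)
Your proposal is correct and follows essentially the same route as the paper: Balakrishnan representation, the identity $(t+a)^{-1}a-(t+b)^{-1}b = t(t+a)^{-1}(t+b)^{-1}(a-b)$ (valid by commutativity), the split of the integral at a parameter, the bound by $2$ on the short piece via $\Vert {\mathfrak F}(x/t)\Vert \leq 1$ and by $t^{-1}\Vert(a-b)\zeta\Vert$ on the long piece, and then the choice $\delta=\Vert(a-b)\zeta\Vert$ for $\zeta\in{\rm Ball}(H)$. Your observation that the inequality as stated must be read for $\zeta$ in the unit ball (equivalently, that a factor $\Vert\zeta\Vert^{1-\alpha}$ is needed in general) is exactly the assumption the paper makes at the outset of its proof.
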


\begin{proof}    By the Balakrishnan representation in the last proof,
 if $\zeta \in {\rm Ball}(H)$ we have
$$(a^{\alpha}  - b^\alpha) \zeta =
\frac{\sin(\alpha \pi)}{\pi} \int_0^\infty \, t^{\alpha - 1} \, [(t + a)^{-1} a - (t + b)^{-1} b] \zeta \, dt.$$
By the inequality $\Vert (t + x)^{-1} \Vert \leq \frac{1}{t}$ for accretive operators $x$, we have
$$\Vert [(t + a)^{-1} a - (t + b)^{-1} b] \zeta \Vert =
\Vert (t + a)^{-1} (t + b)^{-1} (a-b) t \zeta \Vert
\leq \frac{1}{t}  \Vert (a-b) \zeta \Vert ,$$
   and so as in the proof of Lemma \ref{Bal},  $\Vert \int_0^\infty \, t^{\alpha - 1} \, [(t + a)^{-1} a - (t + b)^{-1} b] \zeta \, dt \Vert$ is dominated
by $$2  \int_0^\delta \, t^{\alpha - 1}  \, dt  + \int_{\delta}^\infty \,  t^{\alpha - 2} \, dt \, \Vert (a-b) \zeta \Vert
= \frac{2}{\alpha} \delta^{\alpha} + \frac{\delta^{\alpha -1}}{1-\alpha} \, \Vert (a-b) \zeta \Vert$$
for any $\delta > 0$.
We may now set $\delta = \Vert (a-b) \zeta \Vert$ to obtain our inequality.
   \end{proof}

{\bf Remark.}   The proof above uses a trick from \cite[Theorem 1]{MP}, and perhaps can be extended
to the class of operators considered there.

\begin{lemma} \label{vpow}  If $a \in {\mathfrak r}_A$ for an operator algebra $A$,
and $v$ is a partial isometry in any
 containing $C^*$-algebra $B$ with $v^* v = s(a)$, then
$v a v^* \in {\mathfrak r}_B$ and
$(v a v^*)^r = v a^r v^*$ if $r \in (0,1) \cup \Ndb$.
\end{lemma}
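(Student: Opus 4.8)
The plan is to first establish the membership $vav^* \in \mathfrak{r}_B$, then to show that conjugation by $v$ turns the family $(a^\alpha)_{\alpha > 0}$ into a multiplicative one-parameter semigroup inside $B$, and finally to identify $v a^r v^*$ with the principal root $(vav^*)^r$ by combining uniqueness of $n$th roots with a continuity argument.

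First I would record the support projection identities. Since $s(a)$ is the weak* limit of $(a^{1/n})$, and left and right multiplication by a fixed element are weak* continuous on $A^{**}$, the continuity of $\alpha \mapsto a^\alpha$ from Lemma \ref{roots}(2) gives $s(a) a^\beta = a^\beta s(a) = a^\beta$ for every $\beta > 0$. I will also use that $s(a^{1/n}) = s(a)$, since the net defining the former is a subnet of the net defining the latter. The membership $vav^* \in \mathfrak{r}_B$ is then immediate, because $vav^* + (vav^*)^* = v(a + a^*) v^* \geq 0$, as $a + a^* \geq 0$ and $c \mapsto v c v^*$ preserves positivity.

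Next I would set $w_\alpha = v a^\alpha v^*$ for $\alpha > 0$ and verify the semigroup law $w_\alpha w_\beta = w_{\alpha + \beta}$: indeed $w_\alpha w_\beta = v a^\alpha (v^* v) a^\beta v^* = v a^\alpha s(a) a^\beta v^* = v a^{\alpha + \beta} v^*$, using $v^* v = s(a)$ and $s(a) a^\beta = a^\beta$. This law already settles the integer case, since $(vav^*)^n = (w_1)^n = w_n = v a^n v^*$, and it shows that $w_{1/n}$ is an $n$th root of $vav^*$, since $(w_{1/n})^n = w_1 = vav^*$.

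The crux is the identity $(vav^*)^{1/n} = v a^{1/n} v^*$, for which I would invoke uniqueness of the $n$th root with small numerical range (recalled before Lemma \ref{roots}). Since $vav^*$ is accretive, its principal roots are defined, so it suffices to show that $w_{1/n}$ is the unique $n$th root whose numerical range lies in $S_{\frac{\pi}{2n}}$. The $n$th root property was just checked; for the numerical range, given a unit vector $\xi$ I would write $\langle w_{1/n} \xi, \xi \rangle = \langle a^{1/n} \eta, \eta \rangle$ with $\eta = v^* \xi$ of norm at most $1$, so this value is a nonnegative multiple of a point of the numerical range of $a^{1/n}$ (or is $0$). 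Since the numerical range of $a^{1/n}$ lies in the closed cone $S_{\frac{\pi}{2n}}$, so does that of $w_{1/n}$. Uniqueness then yields $(vav^*)^{1/n} = w_{1/n}$. For rational $r = p/q \in (0,1)$ I combine this with the semigroup law and the power law $(T^{1/q})^p = T^{p/q}$ to get $(vav^*)^{p/q} = (w_{1/q})^p = w_{p/q} = v a^{p/q} v^*$, and for irrational $r \in (0,1)$ I pass to the limit along rationals using the continuity of $\alpha \mapsto (vav^*)^\alpha$ and of $\alpha \mapsto v a^\alpha v^*$, both from Lemma \ref{roots}(2). The main obstacle is exactly the numerical range containment that powers the uniqueness argument; the remaining steps are semigroup bookkeeping and routine continuity.
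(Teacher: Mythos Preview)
Your argument is correct and complete. It differs from the paper's proof in a genuine way, so a brief comparison is in order.

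The paper does not pass through uniqueness of $n$th roots at all. Instead it works directly with the Balakrishnan integral representation
\[
T^r = \frac{\sin(r\pi)}{\pi}\int_0^\infty t^{r-1}\,(t+T)^{-1}T\,dt,
\]
and reduces everything to the resolvent identity
\[
(t + vav^*)^{-1}\,vav^* \;=\; v\,(t+a)^{-1}a\,v^*,
\]
which is verified in one line using $v^*v\,a = s(a)\,a = a$. Pairing against vectors and integrating then gives $(vav^*)^r = v a^r v^*$ for every $r\in(0,1)$ simultaneously, with no separate treatment of $r=1/n$, rational $r$, or a limiting argument.

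Your route---semigroup law for $w_\alpha = v a^\alpha v^*$, numerical-range containment $w_{1/n}\in S_{\pi/(2n)}$, uniqueness of principal $n$th roots, then density of rationals plus continuity from Lemma~\ref{roots}(2)---is longer but avoids the integral formula entirely and stays within the ``algebraic'' facts about sectorial powers recalled before Lemma~\ref{roots}. The numerical-range step is the only place real work happens, and your observation that $\langle w_{1/n}\xi,\xi\rangle = \langle a^{1/n}\,v^*\xi,\,v^*\xi\rangle$ is a nonnegative multiple of a point of the numerical range of $a^{1/n}$ handles it cleanly. (The remark that $s(a^{1/n})=s(a)$ is true but you never actually use it.) The paper's proof is shorter and treats all $r\in(0,1)$ at once; yours is more self-contained relative to the power-law facts already quoted in the introduction.
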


\begin{proof}   This is clear if $r  = k \in \Ndb$.   It is also clear that $v a v^* \in {\mathfrak r}_B$.
We will use the Balakrishnan representation above to check that $(v a v^*)^r = v a^r v^*$ if $r \in (0,1)$ (it can also 
be deduced from the ${\mathfrak F}_A$ case in \cite{BNII}). 
Claim: $(t + v a v^*)^{-1} v a v^* = v (t + a)^{-1} a v^*$.    Indeed since $v^* v a = a$ we have 
$$(t + v a v^*) v (t + a)^{-1} a v^* 
= v (t+a) (t + a)^{-1} a v^* = v a v^*,$$
proving the Claim.  Hence for any $\zeta, \eta \in H$
we have
$$\langle (t + v a v^*)^{-1} v a v^* \zeta, \eta \rangle
= \langle  v (t + a)^{-1} a v^*  \zeta, \eta \rangle 
= \langle  (t + a)^{-1} a v^*  \zeta, v^* \eta \rangle.$$   
Hence by the Balakrishnan representation $\langle (v a v^*)^r \zeta, \eta \rangle$ equals
$$\frac{\sin(r \pi)}{\pi} \int_0^\infty \, t^{r - 1} \, \langle
 (t + vav^*)^{-1} v a v^* \zeta , \eta \rangle \, dt = 
\frac{\sin(r \pi)}{\pi} \int_0^\infty \, t^{r - 1} \, \langle  (t + a)^{-1} a v^*  \zeta, v^* \eta \rangle \, dt,$$
which equals $\langle v a^r v^* \zeta, \eta \rangle$, as desired.
\end{proof}

The last result generalizes \cite[Lemma 1.4]{BNI}.   With the last few results  in hand, particularly 
Lemma \ref{Bal} and Lemma \ref{vpow}, it appears that 
all of the results in \cite{BNI} stated in terms of  ${\mathfrak F}_A$ (or $\frac{1}{2}  {\mathfrak F}_A$ or ${\mathfrak c}_A$),
should generalize without problem to the ${\mathfrak r}_A$ case.  We admit that we have not yet carefully checked every part of every result in 
\cite{BNI}  
  for this though, but hope to
in forthcoming work.     

\subsection{Commuting operators in ${\mathfrak r}_A$}   If $S \subset {\mathfrak r}_A$, for an operator algebra
$A$,  and if $xy = yx$
for all $x, y \in S$, write oa$(S)$ for the smallest closed subalgebra of $A$ containing $S$.

\begin{lemma} \label{commt}  If $S$ is a commuting subset of ${\mathfrak r}_A$
then ${\rm oa}(S)$ has a cai.
\end{lemma}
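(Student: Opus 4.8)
The plan is to identify the identity of ${\rm oa}(S)^{**}$ explicitly, as the supremum of the support projections of the generators, and then to invoke the fact recalled in the introduction that an operator algebra has a cai iff its bidual has an identity of norm $1$. The commutativity of $S$ is the engine throughout: it will force the relevant support projections to commute, so that their joins can be computed by inclusion--exclusion and thereby kept inside the (non-selfadjoint) weak* closed algebra ${\rm oa}(S)^{**}$, viewed as the weak* closure ${\rm oa}(S)^{\perp\perp}$ of ${\rm oa}(S)$ in $A^{**}$.

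First I would assemble the single-generator data. For $x\in{\mathfrak r}_A$, recall from \cite{BRII} that the support projection $s(x)$ is the weak* limit of $(x^{1/m})$, is a projection in $A^{**}$, and satisfies $s(x)x=x\,s(x)=x$; moreover each $x^{1/m}\in{\rm oa}(x)$ by Lemma \ref{roots}(3), so $s(x)\in{\rm oa}(x)^{\perp\perp}\subseteq{\rm oa}(S)^{**}$. The key structural point is that $\{s(x):x\in S\}$ is a mutually commuting family of projections. Indeed, for $x,y\in S$ the roots $x^{1/m},y^{1/k}$ lie in the commutative algebra ${\rm oa}(S)$, so $x^{1/m}y^{1/k}=y^{1/k}x^{1/m}$; since multiplication in $A^{**}$ is separately weak* continuous (as $A^{**}$ is a weak* closed subalgebra of the von Neumann algebra $C^*(A)^{**}$), passing to the weak* limits first in $k$ and then in $m$ gives $s(x)s(y)=s(y)s(x)$.

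Next, for a finite $F\subseteq S$ I would set $p_F=\bigvee_{x\in F}s(x)=1-\prod_{x\in F}\bigl(1-s(x)\bigr)$. Because the $s(x)$ for $x\in F$ are mutually commuting projections, expanding the product by inclusion--exclusion exhibits $p_F$ as a polynomial with no constant term in these projections, hence $p_F\in{\rm oa}(F)^{\perp\perp}\subseteq{\rm oa}(S)^{**}$. The net $(p_F)$, indexed by the finite subsets of $S$ ordered by inclusion, is an increasing net of projections, so it converges weak* to its supremum $p=\bigvee_{x\in S}s(x)$; as ${\rm oa}(S)^{**}$ is weak* closed we get $p\in{\rm oa}(S)^{**}$. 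Finally I would verify that $p$ is a two-sided identity for ${\rm oa}(S)^{**}$: since $p\ge s(x)$ we have $p\,s(x)=s(x)$, whence $px=p\,s(x)x=s(x)x=x$ and likewise $xp=x$ for every $x\in S$; this passes to products $x_1\cdots x_k$, then to ${\rm oa}(S)$ by boundedness of multiplication by $p$, and then to all of ${\rm oa}(S)^{**}$ by separate weak* continuity. Thus ${\rm oa}(S)^{**}$ has an identity $p$ with $\Vert p\Vert=1$, and the cited equivalence yields that ${\rm oa}(S)$ has a cai.

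The main obstacle is precisely the step of landing the joins $p_F$ inside the non-selfadjoint algebra ${\rm oa}(S)^{**}$, and it is where commutativity is indispensable. One might hope to shortcut the argument by taking $y=\sum_{x\in F}x\in{\mathfrak r}_A$ and using the cai of ${\rm oa}(y)$, but this fails: imaginary parts can cancel, so that $s(y)$ need not dominate the individual $s(x)$. For instance the commuting accretive matrices $\begin{bmatrix}0&0\\0&i\end{bmatrix}$ and $\begin{bmatrix}0&0\\0&-i\end{bmatrix}$ sum to $0$, yet their product $\begin{bmatrix}0&0\\0&1\end{bmatrix}$ is exactly the identity needed. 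This shows that products -- equivalently, the join $p+q-pq$ of the commuting support projections -- are genuinely required, and that commutativity cannot be dropped from the hypothesis.
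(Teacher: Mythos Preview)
Your proof is correct and follows the same overall strategy as the paper's: show that $p=\bigvee_{x\in S}s(x)$ lies in $C^{\perp\perp}={\rm oa}(S)^{\perp\perp}$ and is the identity there, then invoke the standard criterion that $C$ has a cai iff $C^{**}$ has a norm-one identity. The only difference is in how the inclusion $p\in C^{\perp\perp}$ is justified: the paper simply cites the general fact from \cite{BRI} that joins of open projections in an operator algebra's bidual remain in the bidual, whereas you give a self-contained argument by first proving that the $s(x)$ commute (via commutativity of the roots $x^{1/m}$ in the commutative algebra ${\rm oa}(S)$) and then using inclusion--exclusion to write each finite join $p_F$ as a polynomial without constant term in the $s(x)$'s. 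This is a pleasant and explicit elaboration rather than a genuinely different route; it has the virtue of making transparent exactly where the commutativity hypothesis enters, at the cost of not appealing to the more general open-projection machinery.
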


\begin{proof}  Let $C = {\rm oa}(S)$.  Then $C$ contains oa$(x)$ for each $x \in S$,
and hence $C^{\perp \perp}$ contains $s(x)$.
Thus $C^{\perp \perp}$ contains  $p = \vee_{x \in S} \, s(x))$ 
(by the comments on `meets' and `joins' on p.\ 190 of \cite{BRI}).  Now it is 
easy to see that $p$ is an identity for $C^{\perp \perp}$, so that 
$C$  has a cai (by e.g.\ \cite[Proposition 2.5.8]{BLM}).  \end{proof} 

If $x, y$ are commuting elements of $\frac{1}{2} {\mathfrak F}_A$  or  ${\mathfrak r}_A$, for an operator algebra $A$, 
then it is not necessarily true   
that $xy$ has numerical range excluding the negative real axis.  (Indeed this can be false
 even in $M_2$, and even if the `imaginary parts' of $x$ and 
$y$ have tiny norms compared to their `real parts', so that their numerical range
consists of numbers in the unit disk with argument very close to zero.  Here $x$ and $y$ are very close to 
being positive). Thus 
one cannot define $(xy)^{\frac{1}{2}}$ as in e.g.\ \cite{LRS}; rather in such cases one should 
define $(xy)^{\frac{1}{2}}$ to be 
$x^{\frac{1}{2}} y^{\frac{1}{2}}$.
It is proved in \cite{BBS} that $(xy)^{\frac{1}{2}}$ is again in $\frac{1}{2} {\mathfrak F}_A$.   
More generally, the product of the $n$th roots of $n$ mutually commuting members of $\frac{1}{2} {\mathfrak F}_A$ (resp.\
${\mathfrak r}_A$)
 is again in $\frac{1}{2} {\mathfrak F}_A$ (resp.\ in 
${\mathfrak r}_A$).   This fact has a nice application
in Section \ref{pury} below.

\subsection{Concavity, monotonicity, and operator inequalities}  The usual operator
concavity/convexity results for $C^*$-algebras seem to fail for the
${\mathfrak r}$-ordering.  That is, results of the type in
\cite[Proposition 1.3.11]{Ped} and its proof fail.
Indeed, functions like  Re$(z^{\frac{1}{2}}),
{\rm Re}(z (1+z)^{-1}), {\rm Re}(z^{-1})$
are not operator concave or convex,  even for
operators  $x, y \in \frac{1}{2} {\mathfrak F}_A$.  In
fact this fails even in the simplest case $A = \Cdb$,
taking $x = \frac{1}{2}, y = \frac{1+i}{2}$.  Similar remarks hold for `operator monotonicity'
with respect to the ${\mathfrak r}_A$-ordering for these functions.

For   the 
${\mathfrak r}$-ordering, a   way one can often prove operator inequalities, or that something is increasing, is as follows.  Suppose for example that $f, g$ are functions in the disk algebra,
 with Re$(f(z)) \leq {\rm Re}(g(z))$ for all $z \in \Cdb$ with $|z| \leq 1$.
If $x \in \frac{1}{2} {\mathfrak F}_A$, for an  operator algebra $A$, then we can deduce
that Re$(f(1-2x)) \leq  {\rm Re}(g(1-2x))$.  Here e.g.\ $f(1-2x)$  is the `disk algebra functional calculus', arising from von Neumann's inequality for the contraction $1-2x$.   The reason for this is that ${\rm Re}(g(z) - f(z)) \geq 0$ on the disk,
and so by  \cite[Proposition 3.1, Chapter IV]{NF}  we have that ${\rm Re}(g(1-2x) - f(1-2x)) \geq 0.$

As an illustration of this principle, it follows easily by this idea that for any  $x \in \frac{1}{2} {\mathfrak F}_A$, the sequence
$({\rm  Re}(x^{\frac{1}{n}}))$ is increasing (see \cite{BBS}).   The last fact is another example of  $\frac{1}{2}   {\mathfrak F}_A$ behaving better than
${\mathfrak r}_A$:  for contractions $x \in {\mathfrak r}_A$, we do not in
general have $({\rm Re} (x^{1/m}))$ increasing  with $m$.
The matrix example just above Lemma \ref{Bal} will demonstrate this.   However one can show that
for any $x \in {\mathfrak r}_A$ there exists a constant $c > 0$ such that 
$({\rm Re} ((x/c)^{1/m}))_{m \geq 2}$ is  increasing  with $m$.   Indeed if 
$c = (2 \Vert {\rm Re}(x^{\frac{1}{2}}) \Vert)^2$, then by Lemma \ref{rootf} (2) we have 
$(x/c)^{\frac{1}{2}} \in \frac{1}{2} {\mathfrak F}_A$.
Thus ${\rm Re} ((x/c)^{t})$ increases as $t \searrow 0$ 
(see the proof of the \cite[Proposition 3.4]{BBS}),
from which the desired assertion  follows.

\begin{corollary} \label{incai}  If  
$A$ is a separable  approximately unital
operator algebra,  then $A$ has a commuting cai which is increasing in the 
${\mathfrak r}$-ordering.   \end{corollary}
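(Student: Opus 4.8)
The plan is to reduce everything to a single strictly positive generator and then take its roots. First I would use separability to replace a given cai by a sequential one, together with the fact (from \cite{BRI,BRII}) that an approximately unital operator algebra has a cai lying in $\frac{1}{2}{\mathfrak F}_A$, to obtain a countable cai $(f_n)$ with each $f_n \in \frac{1}{2}{\mathfrak F}_A$. Since $\frac{1}{2}{\mathfrak F}_A$ is closed and convex, the norm-convergent sum $b = \sum_{n=1}^\infty 2^{-n} f_n$ again lies in $\frac{1}{2}{\mathfrak F}_A \subset {\mathfrak r}_A$. The goal is to show that $b$ is `strictly positive', that is, that its support projection satisfies $s(b) = 1_{A^{**}}$. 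Granting this, ${\rm oa}(b) = A$ (a norm-closed subspace of $A$ whose weak* closure is all of $A^{**}$ must equal $A$), and the cai of ${\rm oa}(b)$ supplied in the introduction will be the one we want.

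The heart of the argument, and the step I expect to be the main obstacle, is to prove $s(b) \geq s(f_n)$ for every $n$. Write $q = 1 - s(b)$; since $s(b)$ is the support projection of the accretive element $b$ we have $bq = qb = 0$, so $\langle b \xi, \xi \rangle = 0$ for every $\xi$ in the range of $q$. Writing $b = 2^{-n} f_n + r_n$ with $r_n = \sum_{m \neq n} 2^{-m} f_m \in {\mathfrak r}_A$, and using that both ${\rm Re}(f_n) \geq 0$ and ${\rm Re}(r_n) \geq 0$, the vanishing of the real part of $\langle b\xi,\xi\rangle$ forces ${\rm Re}(f_n)\, q = 0$. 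Now the defining inequality of $\frac{1}{2}{\mathfrak F}_A$, namely $f_n^* f_n \leq {\rm Re}(f_n)$, yields $(f_n q)^*(f_n q) = q f_n^* f_n q \leq q\, {\rm Re}(f_n)\, q = 0$, so $f_n q = 0$, and symmetrically $q f_n = 0$. Thus $s(b) f_n = f_n s(b) = f_n$, whence $s(b) y = y s(b) = y$ for all $y \in {\rm oa}(f_n)$; applying this to the roots $f_n^{1/m}$ and passing to the weak* limit gives $s(b)\, s(f_n) = s(f_n)$, i.e. $s(f_n) \leq s(b)$. Finally $\bigvee_n s(f_n) = 1_{A^{**}}$: since $(f_n)$ is a cai it converges weak* to $1_{A^{**}}$, and if $p = \bigvee_n s(f_n)$ then $p f_n = f_n$ for all $n$, so $p = \lim_n p f_n = \lim_n f_n = 1_{A^{**}}$ by weak* continuity of left multiplication. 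Hence $s(b) \geq \bigvee_n s(f_n) = 1_{A^{**}}$, so $s(b) = 1_{A^{**}}$ as required.

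It then remains to verify the three desired properties of the sequence $(b^{1/n})_{n \geq 1}$. They commute, since by the root laws recorded before Lemma \ref{roots} one has $b^{1/n} b^{1/m} = b^{1/n + 1/m} = b^{1/m} b^{1/n}$. They are contractive: noting $2b \in {\mathfrak F}_A$ and using the norm inequality $\|y^{1/n}\| \leq \|y\|^{1/n}$ valid on ${\mathfrak F}_A$ (the inequality alluded to in the remark before Lemma \ref{Bal}) together with Lemma \ref{roots}(1), we get $\|b^{1/n}\| = 2^{-1/n} \|(2b)^{1/n}\| \leq 2^{-1/n}\|2b\|^{1/n} \leq 2^{-1/n}\cdot 2^{1/n} = 1$; in particular no genuine rescaling is needed, so $(b^{1/n})$ is itself a cai for ${\rm oa}(b) = A$. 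They are increasing in the ${\mathfrak r}$-ordering: because $b \in \frac{1}{2}{\mathfrak F}_A$, the sequence $({\rm Re}(b^{1/n}))$ is increasing (the fact recalled just before this corollary), which is precisely the statement that $b^{1/(n+1)} - b^{1/n} \in {\mathfrak r}_A$. This produces a commuting cai increasing in the ${\mathfrak r}$-ordering.

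One could alternatively route the strict-positivity/commutativity step through Lemma \ref{commt}, applied to the commuting subset $\{b^{1/n}\} \subset {\mathfrak r}_A$, whose join of support projections is again $s(b)$; either way the real content is the inequality $s(b) \geq s(f_n)$, where the special geometry of $\frac{1}{2}{\mathfrak F}_A$, through the domination $f_n^* f_n \leq {\rm Re}(f_n)$, is what does the work and circumvents the (unknown) cai property of the directed open unit ball from the remark after Corollary \ref{dirset}.
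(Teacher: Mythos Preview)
Your overall strategy---constructing a single element $b \in \frac{1}{2}{\mathfrak F}_A$ with $s(b) = 1_{A^{**}}$ from a sequential cai and then taking its roots---is exactly what underlies \cite[Corollary 2.18]{BRI}, which the paper simply cites in a one-line proof before invoking the monotonicity of $({\rm Re}(b^{1/n}))$ from the preceding paragraph. So the route is the same; you are unpacking the citation.

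There is, however, a genuine error in your write-up. The implication ``$s(b) = 1_{A^{**}} \Rightarrow {\rm oa}(b) = A$'' is false, and your parenthetical justification does not establish it: $s(b) = 1_{A^{**}}$ only says that $1_{A^{**}} \in {\rm oa}(b)^{\perp\perp}$, i.e.\ that ${\rm oa}(b)^{\perp\perp}$ is a \emph{unital} subalgebra of $A^{**}$, not that it equals $A^{**}$. For a concrete failure take $A$ the upper triangular $2 \times 2$ matrices (unital, separable); the constant cai $f_n = I$ lies in $\frac{1}{2}{\mathfrak F}_A$, your construction gives $b = I$, and $s(b) = 1_{A^{**}}$, yet ${\rm oa}(b) = \Cdb I \neq A$. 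Non-unital counterexamples are obtained by direct-summing with, say, $c_0$.

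The repair is to bypass ${\rm oa}(b)$ and argue directly that $(b^{1/n})$ is a cai for $A$. From $s(b) = 1_{A^{**}}$ one has $\overline{bA} = A = \overline{Ab}$, since these are the r- and $\ell$-ideals with support projection $s(b)$ (see e.g.\ \cite[Corollary 3.5]{BRII}). As $(b^{1/n})$ is a bounded approximate identity for ${\rm oa}(b)$, in particular $b^{1/n} b \to b$, whence $b^{1/n}(ba) = (b^{1/n} b)a \to ba$ for every $a \in A$; boundedness then pushes this to all of $\overline{bA} = A$, and symmetrically on the right. With this correction your computation of $s(b)$ via the inequality $f_n^* f_n \leq {\rm Re}(f_n)$, the contractivity of $b^{1/n}$ (which in fact follows immediately from $b \in \frac{1}{2}{\mathfrak F}_A$ and von Neumann's inequality, without the detour through $2b$), and the ${\mathfrak r}$-monotonicity all go through.
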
  \begin{proof}  As in \cite[Corollary 2.18]{BRI}, 
there is a  $x \in \frac{1}{2} {\mathfrak F}_A$ such that 
$(x^{1/n})$ is a  commuting cai.  The result now follows by the fact in the last paragraph.
\end{proof}

We do not know if this is true in the nonseparable case.  See also the  remarks after Corollary \ref{dirset}.

Finally, we clarify a few imprecisions in a couple 
 of the positivity results in \cite{BRI, BRII}.   At the end of Section 4
of \cite{BRII}, states on a nonunital algebra should probably also be assumed to have norm 1  (although the arguments there do not need this). 
 In \cite[Proposition 4.3]{BRI} we should have explicitly stated the hypothesis that $A$ is
approximately unital.  There are some small typo's in the proof of \cite[Theorem 2.12]{BRI}
but the reader should have no problem correcting these.

\section{Strictly real positive elements} \label{morp2} 

An element in $A$ with
${\rm Re}(\varphi(x)) > 0$ for all states on $A^1$
whose restriction 
to $A$ is nonzero, will be called 
{\em strictly real positive}.   Such $x$ are in $ {\mathfrak r}_A$.
This includes the $x \in A$ with Re$(x)$ strictly positive in some 
$C^*$-algebra generated by $A$.  
  If $A$ is approximately unital, then these conditions are in fact   equivalent, 
as the next result shows.
Thus the definition of  strictly real positive here generalizes the  definition 
given in \cite{BRI} for approximately unital operator algebras.

\begin{lemma} \label{secto}   Let $A$ be an approximately unital operator algebra, which 
generates a  $C^*$-algebra $C^*(A)$. 
An element $x \in A$ is strictly real positive in the sense above 
iff ${\rm Re}(x)$ is strictly positive in  
$C^*(A)$.   \end{lemma}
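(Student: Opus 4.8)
The plan is to prove the two directions separately, with the easy direction being an immediate consequence of definitions. First I would dispatch the implication that $\mathrm{Re}(x)$ strictly positive in $C^*(A)$ implies $x$ strictly real positive in the sense above: if $\varphi$ is a state on $A^1$ whose restriction to $A$ is nonzero, then $\varphi$ extends (by Hahn-Banach, as discussed in the introduction) to a state $\psi$ on $C^*(A)$, and $\mathrm{Re}(\varphi(x)) = \mathrm{Re}(\psi(x)) = \tfrac12\psi(x+x^*) = \psi(\mathrm{Re}(x))$. Since $\mathrm{Re}(x)$ is strictly positive in $C^*(A)$ and $\psi$ is a state that does not vanish on $A$, one expects $\psi(\mathrm{Re}(x)) > 0$; the only subtlety is to rule out $\psi(\mathrm{Re}(x)) = 0$, which I would handle by noting that a state vanishing on a strictly positive element must vanish on the whole hereditary subalgebra it generates, hence on all of $C^*(A)$ (a strictly positive element generates $C^*(A)$ as a hereditary subalgebra), contradicting nonvanishing on $A$.

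The substance is the reverse implication: assuming $x$ is strictly real positive (so $\mathrm{Re}(\varphi(x)) > 0$ for every state $\varphi$ of $A^1$ not vanishing on $A$), I must show $\mathrm{Re}(x) = \tfrac12(x+x^*)$ is strictly positive in $C^*(A)$. The natural route is to test against states of $C^*(A)$. Every state $\psi$ of $C^*(A)$ restricts to a functional on $A^1$ of norm at most $1$; when $A$ is approximately unital, states of $A$ correspond exactly to restrictions of states of $C^*(A)$ that are themselves states on $A$, as recorded in the introduction and in Lemma \ref{dualc}. I would argue that for a state $\psi$ on $C^*(A)$ with $\psi(\mathrm{Re}(x)) = 0$, the restriction of $\psi$ to $A$ must be zero; combined with strict real positivity this forces a contradiction unless the hereditary subalgebra support of $\mathrm{Re}(x)$ is everything, giving $\mathrm{Re}(x)$ strictly positive.

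The main obstacle, and the step I would spend the most care on, is bridging between states of $A^1$ and states of $C^*(A)$ so that the hypothesis (phrased via $A^1$) can be applied to an arbitrary state of $C^*(A)$ witnessing failure of strict positivity. Concretely, a positive element $h = \mathrm{Re}(x) \in C^*(A)_+$ fails to be strictly positive precisely when there is a state $\psi$ of $C^*(A)$ with $\psi(h) = 0$, equivalently when $h$ does not generate $C^*(A)$ as a hereditary subalgebra. I would want to produce, from such a $\psi$, a state $\varphi$ on $A^1$ that is nonzero on $A$ yet has $\mathrm{Re}(\varphi(x)) = 0$, directly contradicting the hypothesis. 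The delicate point is ensuring the restriction of $\psi$ to $A$ is genuinely nonzero (so the hypothesis applies); here I expect to invoke that $A$ is approximately unital together with the support-projection machinery for $\mathrm{Re}(x)$, perhaps using that $s(\mathrm{Re}(x))$ is an open projection and relating the null space of $\psi$ on $h$ to the complementary closed projection. I anticipate that the proof may instead, as is cleaner, proceed by contraposition through the numerical range or sector results (Lemma \ref{secto} is positioned right after the roots and cone material), reducing strict positivity to an $\epsilon$-bound $\mathrm{Re}(\varphi(x)) \geq \epsilon$ uniform over the state space, so the compactness of the state space of $A^1$ converts the pointwise strict inequality into a uniform one.
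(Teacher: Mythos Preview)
Your first direction is essentially the paper's argument, though you manufacture a ``subtlety'' that is not there: once you have a nonzero positive functional on $C^*(A)$ (the restriction of your extended state), it is a positive multiple of a state, and strict positivity of $\mathrm{Re}(x)$ in $C^*(A)$ is \emph{by definition} the statement that every state is strictly positive on it. No hereditary-subalgebra argument is needed.

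For the converse you correctly isolate the only real issue---given a state $\psi$ on $C^*(A)$, why is its restriction to $A$ nonzero---but you miss the one-line resolution the paper uses: since $A$ is approximately unital, any cai $(e_t)$ for $A$ is a cai for $C^*(A)$, so $\lim_t \psi(e_t)=1$. Thus $\psi|_A$ is itself a state of $A$ (in particular nonzero), and it extends to a state of $A^1$ to which the hypothesis applies directly, giving $\psi(\mathrm{Re}(x))=\mathrm{Re}(\psi(x))>0$. No support projections, no hereditary machinery.

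Your alternative route via compactness is actually wrong, not merely roundabout. The set of states of $A^1$ whose restriction to $A$ is nonzero is not weak*-closed (its closure contains states vanishing on $A$), so the pointwise inequality $\mathrm{Re}(\varphi(x))>0$ need not upgrade to a uniform bound $\mathrm{Re}(x)\geq\epsilon\,1$. Indeed the Remark immediately following the lemma in the paper notes that this fails already for $A=c_0$: a strictly real positive element of an approximately unital but nonunital algebra need not have $\mathrm{Re}(x)\geq\epsilon\,1$, so no compactness argument of this type can succeed.
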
  \begin{proof}  The one direction 
 follows because any state 
on $A^1$ whose restriction 
to $A$ is nonzero, extends to a state on $C^*(A)^1$ which is nonzero on 
$C^*(A)$.  The restriction to $C^*(A)$ of the latter state is a positive multiple of a state.

For the other direction 
recall that we showed in the introduction that  
any state on  $C^*(A)$ gives rise to a state on $A^1$.
Since any cai of $A$ is a cai of $C^*(A)$, the latter state cannot vanish on $A$.
  \end{proof}


{\bf Remark.}  Note that if ${\rm Re}(x) \geq \epsilon 1$ in $C^*(A)^1$, then there exists
a constant $C > 0$ with ${\rm Re}(x) \geq \epsilon 1 \geq C x^* x$,
and it follows that $x \in \Rdb_+ {\mathfrak F}_A$.  
Thus if $A$ is unital then every
strictly real positive in $A$ is 
 in $\Rdb_+ {\mathfrak F}_A$.   However this is false 
if $A$ is approximately unital (it is even easily seen to be 
false in the $C^*$-algebra  $A = c_0$).  Conversely, note that 
if $A$ is an approximately unital operator algebra with no 
r-ideals and no identity, then every nonzero element of 
$\Rdb_+ {\mathfrak F}_A$ is strictly real positive
by \cite{BRI} Theorem 4.1.

We also remark that it is tempting to define 
an element  $x \in A$ to be strictly real positive if Re$(x)$ strictly positive in some
$C^*$-algebra generated by $A$.  However this definition 
can depend on the particular generated $C^*$-algebra, unless one only uses
states on the latter that are not allowed to vanish on $A$ (in which
case it is equivalent to other definition).   As an example of this, consider the 
algebra of $2 \times 2$ matrices supported on the first row, and the various $C^*$-algebras it
can generate.

\medskip

We next discuss how results  in \cite{BRI}  generalize, particularly those 
related to strict real positivity if 
we use the definition at the start of the present section.
  We will need some of this in  Section \ref{fgsect}  below.  
We recall that in \cite{BRI}, many `positivity' results were established for elements in ${\mathfrak F}_A$ or $\frac{1}{2} {\mathfrak F}_A$,
and by extension for the proper cone ${\mathfrak c}_A = 
\Rdb_+ {\mathfrak F}_A$.  In  \cite[Section 3]{BRII} we pointed out several of these facts
that generalized to the larger cone ${\mathfrak r}_A$, and indicated that some of this would be discussed in more detail in
\cite{BBS}.  In  \cite[Section 4]{BRII} we pointed out that the hypothesis in many of these results that 
$A$ be approximately unital could be simultaneously relaxed. 
In the next 
few paragraphs we give a few more details, that indicate the 
similarities and differences between these cones, particularly focusing on the results involving 
strictly real positive elements.    The following list should be added to the list in \cite[Section 3]{BRII},
and some complementary details are discussed in \cite{BBS}.  

In \cite[Lemma 2.9]{BRI} the ($\Leftarrow$) direction
is correct  for $x \in {\mathfrak r}_A$ with the same proof.   Also one need not assume there that 
$A$ is approximately unital, as we said towards the end of Section 4 in 
\cite{BRII}.    
The other direction is not true in general (not even in  $A = \ell^\infty_2$, see example in \cite{BBS}),
but there is a partial result, Lemma \ref{sect} below.

In \cite[Lemma 2.10]{BRI},
 (v) implies (iv) implies (iii) (or equivalently (i) or (ii)),  with ${\mathfrak r}_A$
in place of ${\mathfrak F}_A$, using the ${\mathfrak r}_A$ version above of the ($\Leftarrow$) direction of 
\cite[Lemma 2.9]{BRI}, and \cite[Theorem 3.2]{BRII} (which 
gives $s(x) = s({\mathfrak F}(x))$).
 However none of the other implications in that lemma are correct, even in $\ell^\infty_2$.

Proposition 2.11 and Theorem 2.19 of \cite{BRI} are correct in their ${\mathfrak r}_A$ variant, which should 
be phrased in terms of 
 strictly real positive elements in ${\mathfrak r}_A$ as defined above at the start of
the present section.  
Indeed this variant of Proposition 2.11 is true even for nonunital algebras 
if  in the proof we replace $C^*(A)$ by $A^1$.  
Theorem 2.19  of \cite{BRI} may be 
seen using the parts of \cite[Lemma 2.10]{BRI} which are true for ${\mathfrak r}_A$
in place of ${\mathfrak F}_A$, and \cite[Theorem 3.2]{BRII} (which 
gives $s(x) = s({\mathfrak F}(x))$).    Lemma 2.14 of \cite{BRI} 
is  clearly false even in $\Cdb$, however
it is true with essentially the same proof if the elements $x_k$ there
are strictly real positive elements, or more generally if they are in ${\mathfrak r}_A$ and their  numerical ranges in $A^1$ intersects the imaginary 
axis only possibly at $0$.  
 Also,  this 
does not effect the correctness of the important results that follow it in \cite[Section 2]{BRI}.  Indeed as  stated 
in \cite{BRII},  all descriptions of r-ideals and $\ell$-ideals and HSA's from \cite{BRI} are valid 
with ${\mathfrak r}_A$ in place of ${\mathfrak F}_A$, sometimes by using \cite[Corollaries 3.4 and 3.5]{BRII}).  We remark that Proposition 2.22 of  \cite{BRI} is clearly false
with ${\mathfrak F}_A$ replaced by ${\mathfrak r}_A$, even in $\Cdb$.
  
Similarly, in  \cite{BRI} Theorem 4.1, (c) implies (a) and (b) there with ${\mathfrak r}_A$
in place of ${\mathfrak F}_A$.   However the Volterra algebra \cite[Example 4.3]{BRI}
is an example where (a) in \cite{BRI} Theorem 4.1 holds but not (c) (note that 
the Volterra operator $V \in {\mathfrak r}_A$, but $V$ is not strictly real positive in 
$A$).   The results in Section 3 of  \cite{BRI}  will be discussed later in Section
\ref{fgsect}.

\begin{lemma} \label{sect}  In an operator algebra $A$, 
suppose that $x \in {\mathfrak r}_A$ and either $x$ is strictly real positive, or 
the numerical range $w(x)$ of $x$ in $A^1$ is contained in a sector 
$S_\psi$ of angle $\psi < \pi/2$ (see notation above Lemma {\rm \ref{roots}}).  If $\varphi$ is a state 
on $A$ or more generally on $A^1$, 
then $\varphi(s(x)) = 0$ iff 
$\varphi(x) = 0$.   \end{lemma}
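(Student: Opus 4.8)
The plan is to prove the two implications separately, noting that the substantive direction is $\varphi(x) = 0 \Rightarrow \varphi(s(x)) = 0$, and that this is where the two alternative hypotheses enter. Throughout I would extend $\varphi$ (first to a state on $A^1$ if it is only given on $A$, then by Hahn--Banach to a state on $B = C^*(A^1)$, and finally normally to $B^{**}$) and call this extension $\tilde{\varphi}$; recall that $s(x) \in A^{\perp\perp} \subseteq B^{**}$ is the weak* limit of $(x^{1/n})$ and the identity of ${\rm oa}(x)^{\perp\perp}$, so $s(x) x = x$ and $s(x)^* = s(x)$.

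First I would dispose of $\varphi(s(x)) = 0 \Rightarrow \varphi(x) = 0$, which needs only $x \in {\mathfrak r}_A$. By Cauchy--Schwarz for the state $\tilde{\varphi}$ and the identity $x = s(x) x$, we get $|\tilde{\varphi}(x)|^2 = |\tilde{\varphi}(s(x)^* x)|^2 \leq \tilde{\varphi}(s(x)) \, \tilde{\varphi}(x^* x)$; since $\tilde{\varphi}(s(x)) = \varphi(s(x)) = 0$ and $\tilde{\varphi}(x^* x) \leq \Vert x \Vert^2$, this forces $\varphi(x) = \tilde{\varphi}(x) = 0$.

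For the converse, under the hypothesis that $x$ is strictly real positive the argument is immediate from the definition: $\varphi(x) = 0$ gives ${\rm Re}(\varphi(x)) = 0$, so $\varphi$ (viewed on $A^1$) cannot have nonzero restriction to $A$, i.e.\ $\varphi$ vanishes on $A$; since $s(x) = {\rm wk}^*\lim_n x^{1/n}$ with each $x^{1/n} \in A$ (by Lemma \ref{roots}(3)), the weak* continuous extension of $\varphi$ sends $s(x)$ to $\lim_n \varphi(x^{1/n}) = 0$.

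The main work, and the main obstacle, is the sector case. Here I would use the normal extension $\tilde{\varphi}$ on $B^{**}$. From $\varphi(x) = 0$ we get $\tilde{\varphi}({\rm Re}(x)) = \tfrac12\tilde{\varphi}(x + x^*) = 0$; and by Lemma \ref{rootf}(1) the sector condition supplies a constant $C$ with $x^* x \leq C\,{\rm Re}(x)$, so applying $\tilde{\varphi}$ yields $\tilde{\varphi}(x^* x) = 0$. The crux is then to propagate this from $x$ to every root $x^{1/n}$, and thereby to $s(x)$; I expect this passage to be the delicate step, since $x^{1/n}$ is not literally a left multiple of $x$. The trick I would use is the Balakrishnan representation from the proof of Lemma \ref{Bal}: another Cauchy--Schwarz estimate, with the factor $\Vert (t+x^*)^{-1} \Vert^2 \leq t^{-2}$ controlled, shows $\tilde{\varphi}((t+x)^{-1} x) = 0$ for every $t > 0$, whence integrating $t^{1/n-1}\,\tilde{\varphi}((t+x)^{-1}x)$ against $\tfrac{\sin(\pi/n)}{\pi}$ gives $\tilde{\varphi}(x^{1/n}) = 0$ for all $n$. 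Normality of $\tilde{\varphi}$ and weak* convergence $x^{1/n} \to s(x)$ then deliver $\varphi(s(x)) = \tilde{\varphi}(s(x)) = \lim_n \tilde{\varphi}(x^{1/n}) = 0$. The only points requiring care are the interchange of $\tilde{\varphi}$ with the (norm-convergent) Balakrishnan integral and the weak*-continuity of the bitranspose $A^{**} \to B^{**}$ used to pass the limit through $\tilde{\varphi}$; both are routine.
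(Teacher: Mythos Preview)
Your proof is correct. The easy direction and the strictly real positive case match the paper exactly. In the sector case you reach $\tilde\varphi(x^*x)=0$ via Lemma~\ref{rootf}(1), whereas the paper invokes the GNS form $\varphi=\langle\pi(\cdot)\xi,\xi\rangle$ together with an external reference (Lemma~5.3, Chapter~IV of \cite{NF}); but that Nagy--Foias inequality is precisely the estimate $x^*x\le C\,\mathrm{Re}(x)$ in the proof of Lemma~\ref{rootf}(1), so your route is the same argument made self-contained.

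The one genuinely different step is the passage from $\tilde\varphi(x^*x)=0$ to $\varphi(s(x))=0$. The paper simply cites the proof of \cite[Lemma~2.9]{BRI}: in GNS language, $\pi(x)\xi=0$ forces $\pi(a)\xi=0$ for every $a\in\mathrm{oa}(x)$, in particular for $a=x^{1/n}$. Your Balakrishnan argument is valid, but it is more machinery than needed; the same Cauchy--Schwarz you already used gives $|\tilde\varphi(ax)|^2\le\tilde\varphi(aa^*)\,\tilde\varphi(x^*x)=0$ for \emph{every} $a$, so $\tilde\varphi$ kills $x\cdot\mathrm{oa}(x)$ and hence (by density, since $(x^{1/n})$ is a bai for $\mathrm{oa}(x)$) all of $\mathrm{oa}(x)$, including each $x^{1/n}$. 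That shortcut avoids the integral-interchange and weak*-continuity bookkeeping you flagged at the end.
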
  \begin{proof}   The one direction is as in 
\cite[Lemma 2.9]{BRI}  as mentioned above.  The strictly real positive case of the 
other direction is obvious (but non-vacuous in the $A^1$ case).  In the remaining case, 
 write $\varphi = \langle \pi(\cdot) \xi , \xi \rangle$
for a unital $*$-representation  $\pi$ of $C^*(A^1)$ on a Hilbert space $H$,
and a unit vector $\xi \in H$.
Then $w(\pi(x))$ is contained in a sector of the same angle.
By Lemma 5.3 in Chapter IV of \cite{NF} we have $\Vert \pi(x) \xi \Vert^2
= \varphi(x^* x) = 0$.  As e.g.\ in the proof of 
\cite[Lemma 2.9]{BRI} this gives $\varphi(s(x)) = 0$.
  \end{proof}

\begin{corollary} \label{sx}  Let $x \in {\mathfrak r}_A$ for  an operator algebra $A$.  If  $\varphi(x^{\frac{1}{n}}) = 0$
for some $n \in \Ndb, n \geq 2$, and state  $\varphi$ on $A$,
then $\varphi(s(x)) = 0$ and $\varphi(x^{\frac{1}{m}}) = 0$
for all $m \in \Ndb$.  Thus if $\varphi(s(x)) \neq 0$ for a state  $\varphi$ on $A$, 
then ${\rm Re}(\varphi(x^{\frac{1}{n}})) > 0$ 
for all $n \in \Ndb, n \geq 2$.
   \end{corollary}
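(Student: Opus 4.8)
The plan is to reduce everything to Lemma \ref{sect} applied not to $x$ itself but to its roots $y = x^{1/n}$. The key observation is that for $n \geq 2$ the root $y = x^{1/n}$ lies in ${\mathfrak c}_A \subseteq {\mathfrak r}_A$ (by Lemma \ref{rootf}(2)) and, as recorded above Lemma \ref{roots}, its numerical range in $A^1$ sits inside the sector $S_{\pi/(2n)}$ of angle $\pi/(2n) \leq \pi/4 < \pi/2$. Thus $y$ meets the sector hypothesis of Lemma \ref{sect}, \emph{even though $x$ itself need only be accretive} (numerical range in $S_{\pi/2}$), so that Lemma \ref{sect} cannot be invoked for $x$ directly. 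Since $y = x^{1/n} \in {\rm oa}(x) \subseteq A$ by Lemma \ref{roots}(3), the state $\varphi$ is defined on it.

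Next I would match up the support projections. Using the power law $(x^{1/n})^{1/k} = x^{1/(nk)}$ and the fact recalled in the introduction that $s(a)$ is the weak* limit of $(a^{1/k})_k$, I get $s(x^{1/n}) = \lim_k x^{1/(nk)} = s(x)$, since $x^{1/(nk)} \to s(x)$ weak* as $k \to \infty$. Hence applying Lemma \ref{sect} to $y = x^{1/n}$ yields, for every $n \geq 2$,
\[ \varphi(s(x)) = \varphi(s(x^{1/n})) = 0 \iff \varphi(x^{1/n}) = 0 . \]
In particular, if $\varphi(x^{1/n}) = 0$ for some $n \geq 2$ then $\varphi(s(x)) = 0$, which is the first assertion.

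To pass from $\varphi(s(x)) = 0$ back to $\varphi(x^{1/m}) = 0$ for \emph{all} $m \in \Ndb$ (note the sector argument just above degenerates at $m = 1$, where the angle is exactly $\pi/2$), I would extend $\varphi$ to a state $\tilde{\varphi}$ on $C^*(A^1)$ as in the introduction and invoke Cauchy-Schwarz for $\tilde{\varphi}$. Since $s(x)$ is the identity of ${\rm oa}(x)^{\perp\perp}$ and $x^{1/m} \in {\rm oa}(x)$, we have $x^{1/m} = x^{1/m} s(x)$ in the containing von Neumann algebra, so
\[ |\varphi(x^{1/m})|^2 = |\tilde{\varphi}(x^{1/m} s(x))|^2 \leq \tilde{\varphi}(x^{1/m}(x^{1/m})^*)\, \tilde{\varphi}(s(x)) = 0 , \]
giving $\varphi(x^{1/m}) = 0$ for every $m$ (including $m=1$) in one stroke. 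This completes the first part of the statement.

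For the concluding sentence I would argue by contraposition, but upgrading `$\neq 0$' to a \emph{strict} real inequality. For any state $\varphi$ and any $y = x^{1/n} \in {\mathfrak r}_A$ one has $\text{Re}(\varphi(y)) = \tfrac12 \tilde{\varphi}(y + y^*) \geq 0$. Suppose $\text{Re}(\varphi(x^{1/n})) = 0$ for some $n \geq 2$. Writing $y = x^{1/n} \in {\mathfrak c}_A$, the characterization of ${\mathfrak c}_A$ from the introduction supplies $C > 0$ with $y^* y \leq C(y + y^*)$, an inequality preserved in the GNS representation $(\pi, \xi)$ of $\tilde{\varphi}$; hence $\|\pi(y)\xi\|^2 = \tilde{\varphi}(y^* y) \leq C\, \tilde{\varphi}(y + y^*) = 2C\, \text{Re}(\varphi(y)) = 0$, forcing $\varphi(x^{1/n}) = \langle \pi(y)\xi, \xi \rangle = 0$, whence $\varphi(s(x)) = 0$ by the first part — contrary to hypothesis. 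Therefore $\text{Re}(\varphi(x^{1/n})) > 0$ for all $n \geq 2$. I expect the only genuine subtlety to be the reduction in the first paragraph: recognizing that the boundary case (numerical range filling $S_{\pi/2}$) which blocks a direct appeal to Lemma \ref{sect} for $x$ simply vanishes upon passing to any root $x^{1/n}$ with $n \geq 2$, together with the clean identity $s(x^{1/n}) = s(x)$. Everything else is routine Cauchy-Schwarz and GNS manipulation.
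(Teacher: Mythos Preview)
Your proof is correct and follows essentially the same approach as the paper: apply Lemma~\ref{sect} to $x^{1/n}$ (whose numerical range sits in a proper sector for $n\geq 2$), use $s(x^{1/n})=s(x)$, and then deduce $\varphi(x^{1/m})=0$ from $\varphi(s(x))=0$ --- the paper cites the $(\Leftarrow)$ direction of \cite[Lemma 2.9]{BRI} here, which is precisely your Cauchy--Schwarz step. The only cosmetic difference is the last sentence: the paper observes directly that $\varphi(x^{1/n})$ lies in the sector $S_{\pi/(2n)}$, so $\operatorname{Re}(\varphi(x^{1/n}))>0$ is equivalent to $\varphi(x^{1/n})\neq 0$, whereas you recover the same conclusion via the ${\mathfrak c}_A$ inequality $y^*y\leq C(y+y^*)$ in a GNS representation.
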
  \begin{proof}
It is clear that $s(x) = s(x^{\frac{1}{m}})$ for all $m \in \Ndb$, by using for example the fact from
\cite[Section 3]{BRII} that $x^{\frac{1}{n}} \to s(x)$ weak*.  
Since the numerical range of $x^{\frac{1}{n}}$ 
in $A^1$ is contained in a sector
centered on the positive real axis of angle $< \pi$,
$\varphi(s(x)) = \varphi(s(x^{\frac{1}{n}})) = 0$ by Lemma \ref{sect}.
As we said above, this implies that 
$\varphi(x)  = 0$, and the same argument applies with
$x$ replaced by $x^{\frac{1}{m}}$ to give $\varphi(x^{\frac{1}{m}}) = 0$.   

The last statement follows from this, since ${\rm Re}(\varphi(x^{\frac{1}{n}})) > 0$
is equivalent to $\varphi(x^{\frac{1}{n}}) \neq 0$ if $n \geq 2$.  \end{proof}

{\bf Remark.}   Examining the proofs of the last three results show that they are 
valid if states on $A$ are replaced by  nonzero functionals that extend to states on 
$A^1$, or equivalently extend to  a $C^*$-algebra generated by $A^1$.

\begin{corollary} \label{sx3}  
In an operator algebra $A$,
if $x \in {\mathfrak r}_A$ and $x$ is strictly real positive,
then $x^{\frac{1}{n}}$ is strictly real positive for all $n \in \Ndb$. 
 \end{corollary}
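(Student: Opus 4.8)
The plan is to verify the defining property from the start of Section \ref{morp2} directly. So I would fix an arbitrary state $\varphi$ on $A^1$ whose restriction to $A$ is nonzero, and aim to show ${\rm Re}(\varphi(x^{\frac{1}{n}})) > 0$. First I would record the harmless preliminaries: $x^{\frac{1}{n}} \in {\rm oa}(x) \subset A$ by Lemma \ref{roots}~(3), and $x^{\frac{1}{n}} \in {\mathfrak c}_A \subset {\mathfrak r}_A$ by Lemma \ref{rootf}~(2), so that the notion `strictly real positive' makes sense for it and ${\rm Re}(\varphi(x^{\frac{1}{n}})) \geq 0$ is automatic. The case $n = 1$ is the hypothesis, so I would assume $n \geq 2$.

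The crux is to show that strict real positivity of $x$ forces $\varphi(s(x)) \neq 0$ for every such $\varphi$. Since $\varphi|_A \neq 0$, the hypothesis on $x$ gives ${\rm Re}(\varphi(x)) > 0$, so in particular $\varphi(x) \neq 0$. I would then invoke the strictly-real-positive case of Lemma \ref{sect}, which is stated precisely for states on $A^1$, to conclude that $\varphi(s(x)) = 0$ iff $\varphi(x) = 0$; hence $\varphi(s(x)) \neq 0$. With this in hand the conclusion drops out of Corollary \ref{sx}: although that corollary is phrased for states on $A$, by the Remark following it the statement remains valid for the nonzero functional $\varphi|_A$ that extends to the state $\varphi$ on $A^1$, and so $\varphi(s(x)) \neq 0$ yields ${\rm Re}(\varphi(x^{\frac{1}{n}})) > 0$ for every $n \geq 2$. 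As $\varphi$ was an arbitrary state on $A^1$ not vanishing on $A$, this shows $x^{\frac{1}{n}}$ is strictly real positive.

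I do not expect a genuine obstacle; the result is essentially a repackaging of Corollary \ref{sx} together with Lemma \ref{sect}. The only delicate point is bookkeeping over the right class of functionals: one must quantify over states on $A^1$ that do not vanish on $A$, interpret $\varphi(s(x))$ through the canonical extension of $\varphi$ to the bidual (where $s(x)$ is the weak* limit of $x^{\frac{1}{n}}$), and lean on the Remark after Corollary \ref{sx} to transfer its conclusion from states on $A$ to these functionals. If one preferred to bypass that Remark, the same argument could be run by applying Lemma \ref{sect} directly to $y = x^{\frac{1}{n}}$, whose numerical range lies in a sector of angle $< \pi/2$ about the positive real axis and which satisfies $s(y) = s(x)$: then $\varphi(s(x)) \neq 0$ forces $\varphi(x^{\frac{1}{n}}) \neq 0$, and a nonzero value in such a sector has strictly positive real part, giving ${\rm Re}(\varphi(x^{\frac{1}{n}})) > 0$ again.
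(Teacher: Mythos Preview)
Your proposal is correct and is essentially the same argument as the paper's, just run directly rather than by contradiction. The paper assumes $x^{1/n}$ is not strictly real positive, obtains a state $\varphi$ on $A^1$ nonzero on $A$ with $\varphi(x^{1/n})=0$ (using the sector containment of the numerical range for $n\ge 2$), and then applies Corollary~\ref{sx} via the Remark to get $\varphi(x)=0$, a contradiction; you instead start from $\varphi(x)\neq 0$, pass through $\varphi(s(x))\neq 0$ via Lemma~\ref{sect}, and invoke the second clause of Corollary~\ref{sx} via the same Remark to obtain ${\rm Re}(\varphi(x^{1/n}))>0$. The content and the key citations are identical.
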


\begin{proof}
If $x^{\frac{1}{n}}$ is not strictly real positive for some $n \geq 2$,
then $\varphi(x^{\frac{1}{n}}) = 0$ for some state 
$\varphi$ of $A^1$ which is nonzero on $A$.  Such a state extends to a state 
on $C^*(A^1)$.  By the last Remark,  $\varphi(x)  = 0$ by Corollary \ref{sx}, a contradiction.
\end{proof}

\section{Principal $r$-ideals} \label{fgsect}

Section 3 in our earlier paper \cite{BRI} was entitled
``When $xA$ and $Ax$ are closed".  We showed there that if $x \in {\mathfrak F}_A$ then  
both $xA$ and $Ax$ are closed iff $x$ is pseudo-invertible (that is, 
there exists $y \in A$  with $xy x = x$).   We can improve this result as follows:

\begin{theorem}   \label{ws}  For
an operator algebra $A$, if $x \in {\mathfrak r}_A$, then the following
are equivalent:
  \begin{itemize}
\item [(i)] $s(x) \in A$,
\item [(ii)]  $xA$ is closed,
\item [(iii)] $Ax$  is closed,
\item [(iv)]  $x$ is pseudo-invertible in $A$,
 \item [(v)]  $x$ is invertible in ${\rm oa}(x)$. 
\end{itemize}
Moreover, all of Theorem {\rm 3.2} of \cite{BRI} is valid for 
$x \in {\mathfrak r}_A$.   
\end{theorem}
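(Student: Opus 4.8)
The plan is to reduce the whole statement to the already-settled $\frac{1}{2}{\mathfrak F}_A$ (equivalently ${\mathfrak F}_A$) case via the ${\mathfrak F}$-transform. Set $y = {\mathfrak F}(x) = x(1+x)^{-1}$. By the Lemma on the ${\mathfrak F}$-transform above, $y \in \frac{1}{2}{\mathfrak F}_A$ and $\Vert y \Vert < 1$; moreover, writing $1-a = \frac{1}{2} 1 + \frac{1}{2}(1 - 2a)$ shows $\frac{1}{2}{\mathfrak F}_A \subset {\mathfrak F}_A$, so in fact $y \in {\mathfrak F}_A$. I would then show that each of (i)--(v), and indeed every assertion of Theorem 3.2 of \cite{BRI}, holds for $x$ if and only if it holds for $y$, and finish by quoting that theorem applied to $y$.

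The transfer rests on a short list of identities. For (i) I use $s(x) = s({\mathfrak F}(x)) = s(y)$, which is \cite[Theorem 3.2]{BRII}. For (ii) and (iii) the point is that $(1+x)$ and $(1+x)^{-1} = 1 - y$ map $A$ bijectively onto $A$ (indeed $(1+x)^{-1}a = a - ya \in A$ for $a \in A$), so $yA = x(1+x)^{-1}A = xA$ and $Ay = A(1+x)^{-1}x = Ax$ as literal sets, whence $xA$ is closed exactly when $yA$ is. For (iv), pseudo-invertibility passes across explicitly: if $yby = y$ with $b \in A$, then $w = (1+x)^{-1}b \in A$ satisfies $xwx = ybx = yb\,y(1+x) = (yby)(1+x) = y(1+x) = x$; conversely if $xwx = x$ then $b = (1+x)w \in A$ satisfies $yby = xwy = (xwx)(1+x)^{-1} = x(1+x)^{-1} = y$ (both computations use $y = x(1+x)^{-1}$ and that $(1+x)^{-1}$ commutes with $x$). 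For (v) I would establish ${\rm oa}(x) = {\rm oa}(y)$ and note that $x$ and $y$ differ by the factor $(1+x)^{\pm 1}$, which is invertible in ${\rm oa}(x)^1$, so $x$ is invertible in ${\rm oa}(x)$ precisely when $y$ is.

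The only step needing care is ${\rm oa}(x) = {\rm oa}(y)$. The inclusion $x \in {\rm oa}(y)$ is immediate from $\Vert y \Vert < 1$ and the Neumann series $x = y(1-y)^{-1} = \sum_{n \geq 1} y^n$. For the reverse inclusion $y \in {\rm oa}(x)$ I would use that the resolvent $(1+x)^{-1}$ lies in the closed unital algebra generated by $x$, so that $y = 1 - (1+x)^{-1}$ lies in ${\rm oa}(x)$ (it carries no scalar part, being an element of $A$). I expect this resolvent-membership bookkeeping, together with keeping straight which conditions transfer as exact set-equalities rather than merely up to closure, to be the only real friction. Once ${\rm oa}(x) = {\rm oa}(y)$, $s(x) = s(y)$, $xA = yA$, $Ax = Ay$, and the two pseudo-inverse formulas are in hand, Theorem 3.2 of \cite{BRI} applied to $y \in {\mathfrak F}_A$ delivers all five equivalences, and more generally every conclusion of that theorem, for $x \in {\mathfrak r}_A$.
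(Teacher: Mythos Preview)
Your proposal is correct and takes a genuinely different route from the paper's own proof.

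The paper argues the cycle of implications directly for $x \in {\mathfrak r}_A$, using the ${\mathfrak r}_A$-specific tools developed in \cite{BRII}: that $(x^{1/m})$ is a bai for ${\rm oa}(x)$ with weak* limit $s(x)$, and that $x^{1/2} \in {\rm oa}(x)$. For instance, (ii) $\Rightarrow$ (i) goes via $x^{1/2} \in \overline{x\,{\rm oa}(x)} \subset xA$, so $x^{1/2} = xy$, and then $z = x^{1/2}y$ is shown to equal $s(x)$. The ``Moreover'' clause is handled by rerunning the proof of \cite[Theorem~3.2]{BRI} with the right half-plane in place of the disk, invoking the new Proposition~\ref{okins}.

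Your approach instead reduces wholesale to the ${\mathfrak F}_A$ case through the ${\mathfrak F}$-transform $y = {\mathfrak F}(x)$, establishing the dictionary $s(x)=s(y)$, $xA=yA$, $Ax=Ay$, ${\rm oa}(x)={\rm oa}(y)$, and the explicit pseudo-inverse correspondence, then quoting \cite[Theorem~3.2]{BRI} for $y$. This is more economical: it reuses the earlier theorem in full rather than reproving it, and it makes Proposition~\ref{okins} unnecessary (since $x$ is invertible in $B(H)$ iff $y$ is, via the invertible factor $(1+x)$). The paper's approach, by contrast, is more self-contained and exhibits the mechanism directly in terms of roots.

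The one point where your sketch is slightly loose is the claim that $(1+x)^{-1}$ lies in the closed unital algebra generated by $x$; this is not automatic for arbitrary resolvents, but here it holds because $-1$ lies in the unbounded component of the resolvent set (the spectrum sits in the closed right half-plane). A cleaner fix, entirely within the paper's framework, is simply to apply the ${\mathfrak F}$-transform Lemma of Section~3.1 with $A$ replaced by ${\rm oa}(x)$: since $x \in {\mathfrak r}_{{\rm oa}(x)}$, that Lemma gives $y = {\mathfrak F}(x) \in \tfrac{1}{2}{\mathfrak F}_{{\rm oa}(x)} \subset {\rm oa}(x)$ directly, and then ${\rm oa}(x)={\rm oa}(y)$ follows as you indicate.
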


 \begin{proof}  
We recall that $(x^{\frac{1}{m}})_{m \in \Ndb}$ is a bai for ${\rm oa}(x)$, by \cite[Theorem 3.1]{BRII},
and it has weak* limit $s(x) \in {\rm oa}(x)^{\perp \perp} \subset A^{**}$.

(ii)  $\Rightarrow$ (i) \ Suppose $xA$ is closed.  Then   $$x^{\frac{1}{2}} \in {\rm oa}(x) \subset \overline{x  {\rm oa}(x)} \subset 
\overline{xA} = xA,$$
so $x^{\frac{1}{2}} = xy$ for some $y \in A$.
Thus if $z = x^{\frac{1}{2}} y \in A$  then $x =  x^{\frac{1}{2}} x y = x z$,
and so $a = a z$ for every $a \in {\rm oa}(x)$.
Now  $s(x) z = z$ since $x^{\frac{1}{2}} \in {\rm oa}(x)$ for example.
On the other hand $s(x) z = s(x)$ since $s(x)$ is a weak* limit of the 
bai in ${\rm oa}(x)$.     Thus $s(x) = z \in A$.

(i) $\Rightarrow$ (iv) \ Since $s(x)$ is the identity of ${\rm oa}(x)^{**}$, 
(i) is equivalent to ${\rm oa}(x)$ being unital.  This implies by the Neumann
lemma that $x$ is invertible in ${\rm oa}(x)$, hence that $x$ is pseudo-invertible in $A$.

(iv) $\Rightarrow$ (ii) \ (iv) implies
that $xA = xy A$ is closed since $xy$ is idempotent.   

That (iii) is equivalent to the others follows from (ii) and the 
symmetry in (i) or (iv).   That (v) is equivalent to (i) is as in 
\cite[Theorem 3.2]{BRI}.    

The rest follows  almost identically to  \cite[Theorem 3.2]{BRI}, but
using the next Proposition  in place of \cite[Theorem 2.12]{BRI} (in the first
line on page 204 one needs to replace the disk by the right half plane).  
\end{proof}

The next result, used in the last paragraph of the last proof,
 is an analogue of \cite[Theorem 2.12]{BRI}: 

\begin{proposition} \label{okins} 
If $A$ is a
 subalgebra of $B(H)$,
and $x \in {\mathfrak r}_A$,  then $x$ is invertible in $B(H)$ iff $x$ 
is invertible in $A$, and iff  ${\rm oa}(x)$ contains $I_H$; and in this case $s(x) = I_H$.
\end{proposition}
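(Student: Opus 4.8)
The plan is to establish the three equivalences together with the final assertion by exploiting that ${\rm oa}(x)$ is a commutative (singly generated) operator algebra whose approximate identity $(x^{\frac{1}{n}})$ converges weak* to $s(x)$, the identity of ${\rm oa}(x)^{**}$. The implication that invertibility in $A$ forces invertibility in $B(H)$ is immediate, since $A \subseteq B(H)$ and the inverse then already lies in $B(H)$. The real content is to show that invertibility in $B(H)$ forces $I_H \in {\rm oa}(x)$ (indeed that $x^{-1} \in {\rm oa}(x) \subseteq A$), and conversely that $I_H \in {\rm oa}(x)$ forces $x$ to be invertible in $B(H)$; the equality $s(x) = I_H$ will drop out along the way.

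For the forward direction I would first record the identity $s(x)\, x = x$: since right multiplication by the fixed element $x$ is weak* continuous on ${\rm oa}(x)^{**}$ and $x^{\frac{1}{n}} \to s(x)$ weak*, we get $x^{\frac{1}{n}} x = x^{1+\frac{1}{n}} \to s(x)\, x$, while $x^{1+\frac{1}{n}} \to x$ in norm by the continuity of $\alpha \mapsto x^\alpha$ from Lemma \ref{roots}(2). If $x$ is invertible in $B(H)$, multiplying $s(x)\, x = x$ on the right by $x^{-1}$ gives $s(x) = I_H$ at once. To obtain $x^{-1} \in {\rm oa}(x)$ I would use the Riesz holomorphic functional calculus for $f(z) = 1/z$: since $x$ is accretive, ${\rm Sp}_{B(H)}(x)$ lies in the closed right half-plane and omits $0$, so the spectrum cannot separate $0$ from $\infty$, i.e.\ $0$ lies in the unbounded component $U$ of the resolvent set. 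Choosing a cycle $\Gamma \subseteq U$ winding once around ${\rm Sp}_{B(H)}(x)$ and not around $0$, and writing $(zI - x)^{-1} = z^{-1} I + z^{-1} x (zI - x)^{-1}$, one checks that $x(zI-x)^{-1} \in {\rm oa}(x)$ for every $z \in U$ (a Neumann series gives this for large $|z|$, and it propagates along $U$ by analyticity since ${\rm oa}(x)$ is norm closed). Then $x^{-1} = \frac{1}{2\pi i}\oint_\Gamma z^{-1}(zI-x)^{-1}\, dz$ splits as a scalar multiple of $I_H$ with coefficient $\frac{1}{2\pi i}\oint_\Gamma z^{-2}\, dz = 0$, plus a Banach-space valued integral of an ${\rm oa}(x)$-valued function, which again lies in ${\rm oa}(x)$. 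Hence $x^{-1} \in {\rm oa}(x)$, so $I_H = x x^{-1} \in {\rm oa}(x)$, yielding both the condition on ${\rm oa}(x)$ and invertibility in $A$. This is the step I expect to correspond to replacing the disk by the right half-plane in the proof of \cite[Theorem 2.12]{BRI}.

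For the converse I would argue by Gelfand theory. If $I_H \in {\rm oa}(x)$ then ${\rm oa}(x)$ is a unital commutative Banach algebra with unit $I_H$, so $I_H$ is also the identity of ${\rm oa}(x)^{**}$; as that identity is $s(x)$, we conclude $s(x) = I_H$, giving the final assertion directly. Now suppose toward a contradiction that $0 \in {\rm Sp}_{B(H)}(x)$. Since ${\rm oa}(x)$ and $B(H)$ share the unit $I_H$, spectral containment for subalgebras gives $0 \in {\rm Sp}_{{\rm oa}(x)}(x)$, so by commutativity there is a character $\chi$ of ${\rm oa}(x)$ with $\chi(x) = 0$. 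But every element of ${\rm oa}(x)$ is a norm limit of polynomials $p_n(x)$ with $p_n(0) = 0$; in particular $I_H = \lim_n p_n(x)$, and applying the continuous character $\chi$ gives $1 = \chi(I_H) = \lim_n p_n(\chi(x)) = \lim_n p_n(0) = 0$, a contradiction. Hence $0 \notin {\rm Sp}_{B(H)}(x)$, i.e.\ $x$ is invertible in $B(H)$. Together with the trivial implication that invertibility in $A$ gives invertibility in $B(H)$, this closes all the equivalences.

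The hard part will be the functional-calculus step in the forward direction: one must justify, using only accretivity, that $\Gamma$ can be taken in the unbounded component of the resolvent set with the origin excluded, and that $x(zI-x)^{-1}$ stays in the closed subalgebra ${\rm oa}(x)$ all along $\Gamma$. Both points rest on the geometric fact that a compact subset of the closed right half-plane cannot enclose the origin, which is exactly what makes the bonus $x^{-1} \in {\rm oa}(x)$ available; everything else is routine bookkeeping with the weak* limit $s(x)$ and with the continuity of powers from Lemma \ref{roots}.
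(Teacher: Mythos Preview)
Your argument is correct, but it follows a different path from the paper's.

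For the implication ``$x$ invertible in $B(H)$ $\Rightarrow$ $I_H \in {\rm oa}(x)$'', the paper simply invokes the equivalences (i)--(iv) of Theorem~\ref{ws} with $A$ replaced by $B(H)$: since $x$ is (pseudo-)invertible in $B(H)$, condition (i) gives $s(x)\in B(H)$, hence $s(x)\in {\rm oa}(x)^{\perp\perp}\cap B(H)={\rm oa}(x)$, so ${\rm oa}(x)$ is unital; then $s(x)=s(x)xy=xy=I_H$. Your route via the Riesz calculus is self-contained and avoids the back-reference to Theorem~\ref{ws}, which is a genuine advantage if one wants the proposition to stand on its own. The ``propagation by analyticity'' step is the only place that deserves one more line: the cleanest justification is that $(zI-x)^{-1}$ lies in the closed unital subalgebra ${\rm oa}(x)+\Cdb I_H$ for large $|z|$, and a Hahn--Banach/identity-theorem argument (compose with any functional annihilating ${\rm oa}(x)+\Cdb I_H$) pushes this to the whole unbounded component $U$; equivalently, $0$ lies in $U$ so $0\notin{\rm Sp}_{{\rm oa}(x)^1}(x)$ by spectral permanence, whence $x^{-1}\in{\rm oa}(x)+\Cdb I_H$ and $I_H=x\cdot x^{-1}\in{\rm oa}(x)$.

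For the implication ``$I_H\in{\rm oa}(x)$ $\Rightarrow$ $x$ invertible'', the paper argues that the bounded approximate identity $(x^{1/m})$ of ${\rm oa}(x)$ must converge in norm to the unit $I_H$, so $\Vert I_H-x^{1/m}\Vert<1$ for some $m$ and the Neumann series inverts $x^{1/m}$, hence $x$. Your Gelfand/character argument is an equally short alternative: a character killing $x$ would kill every polynomial in $x$ with zero constant term, hence $I_H$, contradicting $\chi(I_H)=1$. Both approaches are elementary; the paper's stays closer to the running theme that $(x^{1/m})$ is a bai, while yours exploits commutativity of ${\rm oa}(x)$ directly.
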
 \begin{proof}  It is clear by the Neumann
lemma that if ${\rm oa}(x)$ contains $I_H$ then $x$ is invertible in 
${\rm oa}(x)$, and hence in $A$.
Conversely, if $x$ is invertible in $B(H)$ then by the equivalences (i)--(iv)
proved in the last theorem, 
with $A$ replaced by $B(H)$ we have $s(x) \in B(H)$, and this is the identity
of ${\rm oa}(x)$.  If $x y = I_H$ in $B(H)$, 
then $I_H = xy = s(x) x y = s(x)$.  The latter implies that $I_H \in {\rm oa}(x) \subset
A$.
\end{proof}

\begin{corollary} \label{Aha3}   Let $A$ be an operator algebra.  A closed right ideal $J$ of $A$
is of the form $x A$ for some $x \in {\mathfrak r}_A$  iff  
$J  = eA$ for a projection $e \in A$.  
\end{corollary}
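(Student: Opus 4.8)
The reverse implication is immediate: if $e \in A$ is a projection then $e = e^* = e^2$, so $e + e^* = 2e \geq 0$ and hence $e \in {\mathfrak r}_A$, while $J = eA$ already exhibits $J$ in the desired form $xA$ with $x = e$. So the content is entirely in the forward direction, and the plan there is to identify the generator $x$ with its support projection $s(x)$, using Theorem \ref{ws} to guarantee that $s(x)$ actually lands inside $A$.

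Concretely, suppose $J = xA$ for some $x \in {\mathfrak r}_A$, so that $xA$ is closed. First I would invoke Theorem \ref{ws}: since condition (ii) holds, so do (i) and (v). Thus the support projection $s(x)$ lies in $A$, and $x$ is invertible in ${\rm oa}(x)$; that is, there is $w \in {\rm oa}(x) \subseteq A$ with $xw = wx = s(x)$, where $s(x)$ is the identity of ${\rm oa}(x)$. Recall that $s(x)$ is an open projection in $A^{**}$, hence in particular an orthogonal projection in the sense used in this paper, and by (i) it belongs to $A$. So the candidate projection is $e = s(x)$, and it remains only to check that $xA = s(x)A$.

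For this, note first that since $s(x)$ is the identity of ${\rm oa}(x) \ni x$ we have $s(x)x = x$, whence $xa = s(x)(xa) \in s(x)A$ for every $a \in A$, giving $xA \subseteq s(x)A$. For the reverse inclusion, the invertibility furnishes $s(x) = xw \in xA$; as $xA$ is a right ideal this forces $s(x)A \subseteq xA$. Combining the two inclusions, $J = xA = s(x)A = eA$ with $e = s(x)$ a projection in $A$, as required. I expect no serious obstacle, since the corollary is essentially a repackaging of the equivalences in Theorem \ref{ws}; the only points needing a little care are that the weak* limit $s(x)$ is genuinely an orthogonal projection lying in $A$ (supplied by (i) together with the definition of the support projection), and that it serves simultaneously as a right generator and a two-sided local identity for $x$, both of which are built into the structure of ${\rm oa}(x)$ recalled at the start of the proof of Theorem \ref{ws}.
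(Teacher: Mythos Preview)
Your proof is correct and follows essentially the same approach as the paper: both invoke Theorem \ref{ws} to place $s(x)$ in $A$, and then identify $xA$ with $s(x)A$. The only minor difference is that the paper cites \cite[Corollary 3.5]{BRII} for the equality $xA = s(x)A$, whereas you give a self-contained argument using the invertibility of $x$ in ${\rm oa}(x)$ (condition (v) of Theorem \ref{ws}); your direct verification via $s(x) = xw$ and $x = s(x)x$ is perfectly valid and arguably more transparent here.
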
   \begin{proof}   If $xA$ is closed for a nonzero $x \in {\mathfrak r}_A$
then by the theorem $e = s(x) \in A$.   Hence  $xA = eA$ by \cite[Corollary 3.5]{BRII} .
The other direction is trivial.   \end{proof}

\begin{corollary} \label{Aha2}   If a  nonunital operator algebra
$A$ contains a nonzero $x \in {\mathfrak r}_A$ with $xA$ closed,
 then $A$ contains  a nontrivial projection.  If also $A$ is approximately unital 
then this implies that $A$ has a nontrivial
r-ideal.  \end{corollary}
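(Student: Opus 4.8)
The plan is to read the required projection straight off Theorem \ref{ws} and then to pin down nontriviality using only that $A$ has no identity. First I would set $e = s(x)$. Since $x \in {\mathfrak r}_A$ is nonzero and $xA$ is closed, the implication (ii) $\Rightarrow$ (i) of Theorem \ref{ws} gives $s(x) \in A$, and $s(x)$ is a projection, being an open (hence orthogonal) projection in $A^{**}$. It is nonzero, for $s(x)$ is the identity of ${\rm oa}(x)^{**}$, so $s(x) = 0$ would force ${\rm oa}(x) = (0)$ and hence $x = 0$. Moreover $e$ cannot be a two-sided identity for $A$, since that would make $A$ unital, contrary to hypothesis; so $e$ is a nontrivial projection in $A$. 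I would stress that this first conclusion uses neither a cai nor Corollary \ref{Aha3}.

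For the second assertion, assume in addition that $A$ is approximately unital, with cai $(e_t)$. By Corollary \ref{Aha3} (or \cite[Corollary 3.5]{BRII}) we have $xA = eA$, and since $e$ is a projection it is a left identity for the right ideal $eA$; thus the constant net $(e)$ is a left cai for $eA$, which is therefore an r-ideal. It remains to check that this r-ideal is nontrivial. On the one hand $e e_t \to e$ in norm (as $e \in A$ and $(e_t)$ is a cai), so $e \in \overline{eA} = eA$ and $eA \neq (0)$. On the other hand, if $eA = A$ then every $a \in A$ has the form $e b$, whence $ea = e(eb) = eb = a$, so $e$ is a left identity for $A$; passing to the weak* limit in $e e_t = e_t$, and using that left multiplication by the fixed element $e \in A$ is weak* continuous on $A^{**}$, I would obtain $e = \lim_t e e_t = \lim_t e_t = 1_{A^{**}}$, again forcing $A$ to be unital. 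Hence $eA \neq A$, and $eA$ is a nontrivial r-ideal.

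The only substantive point — and what I would view as the potential obstacle — is that both nontriviality claims reduce to the single observation that $A$, being nonunital, can contain no two-sided identity; the mild care required is the weak*-continuity of one-sided multiplication together with $e_t \to 1_{A^{**}}$ weak* in the approximately unital setting. Everything else is a direct appeal to Theorem \ref{ws} and Corollary \ref{Aha3}.
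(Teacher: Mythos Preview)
Your proof is correct and follows essentially the same route as the paper's: obtain $e=s(x)\in A$ from Theorem~\ref{ws}, check $e\neq 0$, and for the approximately unital case show $eA\neq A$ by arguing that a left identity $e$ must in fact be two-sided. The only cosmetic difference is that the paper handles this last step with a norm argument (from $e e_t = e_t$ and $e e_t \to e$ one gets $e_t \to e$ in norm, whence $e$ is a two-sided identity) rather than your weak* argument, which is slightly more direct but your version is equally valid.
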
   \begin{proof}  
By the above $xA = eA$ for a projection $e \in A$.  Now $e \neq 0$ since $x \in \overline{x  {\rm oa}(x)} \subset  xA = eA$.
If $eA = A$ then $e$ is a left identity for $A$, hence is a 
two-sided identity if $A$ is approximately unital
(since $e e_t = e_t \to e$ for the cai $(e_t)$).
This contradiction shows that $eA$ is a nontrivial r-ideal.  
\end{proof}

\begin{corollary} \label{Aha}   If an operator algebra $A$ has a cai but no 
identity, then $x A  \neq A$ for all $x \in {\mathfrak r}_A$.  
\end{corollary}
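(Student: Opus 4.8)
The plan is to argue by contradiction: suppose that $xA = A$ for some $x \in {\mathfrak r}_A$. Since $A$ has a cai but no identity it is nonzero, and this forces $x \neq 0$; moreover $xA = A$ is in particular norm closed. The first move is to feed this into Theorem \ref{ws}: its condition (ii), that $xA$ is closed, now holds, so condition (i) gives $s(x) \in A$. Recalling from the introduction that $s(x)$ is the weak* limit of the $x^{1/n}$ and is an open (in particular, an orthogonal) projection, I set $e := s(x)$, a projection lying in $A$. Applying Corollary \ref{Aha3} (or directly \cite[Corollary 3.5]{BRII}) then yields $xA = eA$, and therefore $eA = A$.

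Next I would observe that $eA = A$ says precisely that $e$ is a left identity for $A$: given $a \in A$, write $a = eb$ for some $b \in A$, so that $ea = e^2 b = eb = a$. The remaining, and really the only substantive, step is to promote this left identity to a two-sided identity using a cai $(e_t)$ of $A$. Since $e \in A$ we have $e e_t \to e$; but $e e_t = e_t$ because $e$ is a left identity, so $e_t \to e$ in norm. Consequently, for every $a \in A$ we have both $a e_t \to a$ and $a e_t \to a e$, whence $ae = a$. Thus $e$ is also a right identity, hence a two-sided identity for $A$, contradicting the hypothesis that $A$ has no identity.

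I expect this cai argument to be the sole nontrivial point, since it is exactly the ``$eA = A$'' branch already met in the proof of Corollary \ref{Aha2}; everything preceding it is a direct appeal to Theorem \ref{ws} and Corollary \ref{Aha3}. The one subtlety worth flagging is that the passage from a left identity to a two-sided one genuinely needs the approximate identity: an operator algebra may possess a left identity without being unital, so the hypothesis that $A$ is approximately unital is used essentially here and nowhere else.
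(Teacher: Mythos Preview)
Your proof is correct and follows exactly the paper's route: the paper's one-line proof simply says ``If $xA = A$ then the last proof shows that $A$ is unital,'' and the ``last proof'' (of Corollary \ref{Aha2}) is precisely the argument you have written out in full, including the cai step $e e_t = e_t \to e$. Your closing remark that this is exactly the $eA = A$ branch of Corollary \ref{Aha2} is on the nose.
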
   \begin{proof}
 If $xA  = A$ then the last proof   shows that $A$ is unital. \end{proof}

It follows from this, as in \cite{BRI}, that if $x$ is a strictly real positive element (in new our sense above) in
a nonunital approximately unital operator algebra $A$, then $xA$ is not closed.

\begin{corollary} \label{Aha4}   Let $A$ be an operator algebra.   A closed $r$-ideal $J$ in $A$ 
is algebraically finitely generated as a right module over $A$   iff   $J  = eA$ for a projection $e \in A$.  This
is also equivalent to $A$ being algebraically finitely generated as a right module over $A^1$.
\end{corollary}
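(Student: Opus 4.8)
The plan is to prove a cycle of implications linking the three conditions: (a) $J$ is algebraically finitely generated as a right $A$-module, (b) $J = eA$ for a projection $e \in A$, and (c) $J$ is algebraically finitely generated as a right $A^1$-module. The easy direction throughout is (b), since if $J = eA$ for an idempotent (indeed projection) $e$, then $J$ is generated by the single element $e$ as a right module over either $A$ or $A^1$ (note $e = e \cdot 1$, so $e$ lies in $J$ and $eA = e A^1 \cap A$ after one checks the module structures match up). So the substance is to show that finite generation forces $J$ to be of the form $eA$, at which point Corollary \ref{Aha3} and Theorem \ref{ws} can be invoked.

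First I would recall that $J$, being a closed $r$-ideal, has a left cai, and hence by the theory cited in the introduction corresponds to an open projection and carries a support structure. The key reduction is to produce a \emph{single} element $x \in {\mathfrak r}_A$ with $xA = J$. Suppose $J = y_1 A + \cdots + y_n A$ algebraically (no closure). Since $J$ has a left cai, the element $x = \sum_{k=1}^n y_k^* y_k$, or more appropriately an element built from the generators lying in ${\mathfrak r}_A$, should be arranged to satisfy $xA = J$ with $xA$ already closed. The cleanest route is to use that $J$ is an $r$-ideal with left cai, apply the machinery of Corollary \ref{incai} or the commuting-roots results to replace the generators by a positive element, and then observe that \emph{algebraic} finite generation means $J = xA$ with no closure needed, i.e.\ $xA$ is closed. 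Then by Theorem \ref{ws}, $s(x) \in A$, and by Corollary \ref{Aha3} we conclude $J = eA$ for a projection $e \in A$.

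For the equivalence with (c), finite generation over $A^1$ is a priori weaker than over $A$, so I would show (c) still forces $J = eA$. The observation here is that if $J = z_1 A^1 + \cdots + z_m A^1$ algebraically, then modulo $A$ one can absorb the scalar multiples of the generators: writing each $A^1$-action as $a + \lambda 1$ with $a \in A$ and $\lambda \in \Cdb$, the $A^1$-module generated by finitely many elements of $J$ is contained in $\spn\{z_k\} + \sum_k z_k A$, which is finitely generated as an $A$-module. Hence (c) $\Rightarrow$ (a), and the reverse is trivial since $A \subset A^1$.

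The main obstacle I anticipate is the reduction from a finite \emph{set} of generators to a \emph{single} positive generator $x \in {\mathfrak r}_A$ with $xA$ closed. One must ensure the constructed $x$ lands in ${\mathfrak r}_A$ (so that Theorem \ref{ws} applies) and that $xA$ genuinely equals $J$ rather than merely its closure; this is where the left cai of $J$ and the support projection $s(x)$ must be matched carefully, likely using \cite[Corollary 3.5]{BRII} (which identifies $xA$ with $s(x)A$) together with Corollary \ref{Aha3}. If the single-generator reduction is subtle, an alternative is to argue directly at the level of the support projection $p = 1_{J^{**}}$: finite generation should force $p$ to be \emph{compact} or to lie in $A$, and then one identifies the compact open projection with a projection $e \in A$.
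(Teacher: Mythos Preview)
Your outline correctly identifies the endpoint (apply Corollary \ref{Aha3} once you have $J=xA$ closed for some $x\in{\mathfrak r}_A$), but the heart of the argument --- the reduction from finitely many generators to a single accretive generator --- is never actually carried out. Your candidate $x=\sum y_k^*y_k$ lies in $C^*(A)$, not in $A$, so Theorem \ref{ws} does not apply to it; and the appeal to ``commuting-roots results'' or Corollary \ref{incai} is too vague to produce an $x\in{\mathfrak r}_A$ with $xA=J$ algebraically. You flag this as ``the main obstacle,'' but it is the entire content of the proof, and neither your primary route nor the alternative via compactness of the support projection gives a mechanism for why \emph{algebraic} finite generation forces anything.

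The paper's argument supplies exactly the missing idea, and it is external to the positivity machinery. One first invokes the structure theorem for r-ideals (\cite[Theorem 2.15]{BRI}): any r-ideal $J$ is the closure of an increasing union $\bigcup_t J_t$ with each $J_t=\overline{a_tA}$ for some $a_t\in{\mathfrak F}_A$. The crucial step is then a Banach-algebraic lemma of Sinclair--Tullo \cite[Lemma~1]{ST}: a closed, algebraically finitely generated module cannot be the closure of a strictly increasing union of closed submodules, so the generators $x_1,\dots,x_n$ all land in a single $J_t$, whence $J=J_t=\overline{a_tA}$. A second application of the same lemma shows $\overline{a_tA}=a_tA$, and now Corollary~\ref{Aha3} finishes.

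Your (c)$\Rightarrow$(a) step also has a gap: from $J=\sum z_kA^1=\sum z_kA+\operatorname{span}\{z_k\}$ you cannot conclude finite generation over $A$, because $\operatorname{span}\{z_k\}$ need not lie in $\sum z_jA$ when $A$ is nonunital. The paper instead proves (c)$\Rightarrow$(b) directly by applying the first part with $A^1$ in place of $A$ (noting $J$ is still an r-ideal in $A^1$), obtaining $J=eA^1$ with $e\in J\subset A$, and then observing $eA^1=\{x\in A:ex=x\}=eA$.
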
   \begin{proof}  
Let $J$ be an r-ideal which is algebraically finitely generated over $A$ by elements $x_1, \cdots, x_n \in A$.
By \cite[Theorem 2.15]{BRI}, $J$ is the closure of $\cup_t \, J_t$ of an increasing net 
of r-ideals $J_t = \overline{a_t A}$.  By \cite[Lemma 1]{ST}, $J = \cup_t \, J_t$.   It follows that
for one of these $t$ we have $x_k \in J_t$ for all $k = 1, \cdots, n$, and so $J = J_t$.
By \cite[Lemma 1]{ST} again, $J = a_t A$.   By Corollary \ref{Aha3}, $J = eA$. 

If  $J$ is algebraically finitely generated  over $A^1$  then by the above 
$J  = eA^1$.  Clearly $e \in A$, and so $J = \{ x \in A : e x = x \} = eA$.  \end{proof}


{\bf Remark.}   One of our few remaining open questions concerning the ${\mathfrak r}$-ordering,
is if $q({\mathfrak r}_A) = {\mathfrak r}_{A/I}$, where $q$ is the 
canonical quotient map from an approximately unital operator algebra $A$ 
onto its quotient by an approximately unital ideal $I$.   Nor are we sure if the `strictly real positive' variant of this
is true if $A$ is approximately unital but 
nonunital.  It is easy to see that this is all true if the support projection of $I$ is in $M(A)$.  We showed 
in \cite[Section 6]{BRI} that $q({\mathfrak F}_A) = {\mathfrak F}_{A/J}$.  These may be regarded 
as analogues of the `lifting of positive or selfadjoint elements
in $C^*$-algebras.  They may also  be regarded
as peak interpolation results (see \cite{Bnew} for a brief survey of
peak interpolation).  For example, that $q({\mathfrak F}_A) = {\mathfrak F}_{A/J}$ is saying in
the case of a unital function algebra on a compact set $K$ that given a function $f \in A$ 
with $|1-2f(x)| \leq 1$ on a $p$-set $E$ in $K$, there exists
a $g \in A$ with $|1-2g(x)| \leq 1$ on all of $K$ and $g = f$ on $E$.     

\section{Positivity in the Urysohn lemma}  \label{pury}  

In our previous work \cite{BHN,BRI, BNII, BRII} we had two main settings for
 noncommutative Urysohn lemmata for a subalgebra $A$ of a $C^*$-algebra $B$.  In both
settings we have a compact projection $q \in A^{**}$, dominated
by an open projection $u$ in $B^{**}$, and we seek to find $a \in {\rm Ball}(A)$ with
$aq = q a = q$, and both $a \, u^{\perp}$ and $u^{\perp} \, a$ either small or zero.
In the first setting $u \in A^{**}$ too, whereas this is not required
in the second setting.  We now ask if in both settings one may also
have $a$ very close to a positive operator (in the usual sense),
and $a$ `positive' in our new sense (involving ${\mathfrak r}_A$ or
sets related to ${\mathfrak F}_A$)?  In the first setting, all works
perfectly:

\begin{theorem} \label{urysII}  Let $A$ be an
operator algebra (not necessarily approximately unital), and let
$q \in A^{**}$ be a compact projection, which
is dominated by an open projection  $u \in A^{**}$.  Then
there exists an $a \in \frac{1}{2} {\mathfrak F}_A$ with $a q = qa = q$,
and $a u = ua = a$.  Indeed for any
$\epsilon > 0$ this can be done with in addition
the numerical range (and spectrum) of $a$ within
a horizontal cigar centered on the line segment $[0,1]$ in the
$x$-axis, of height $< \epsilon$.  Such $a$ is within distance
$\epsilon$ of a positive operator in $A + A^*$.
\end{theorem}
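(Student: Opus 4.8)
The plan is to produce $a$ as a suitable root/power of a compact-projection interpolant, exploiting the extra freedom of the first Urysohn setting (where $u \in A^{**}$). First I would invoke the existing noncommutative Urysohn lemma from \cite{BRI} (or its $\frac{1}{2}{\mathfrak F}_A$-refinement) to obtain some $b \in \frac{1}{2}{\mathfrak F}_A$ with $bq = qb = q$ and $bu = ub = b$; the key point of the first setting is that $u$ lies in $A^{**}$, so the open projection $u$ is available to ``clamp'' $b$ into the hereditary subalgebra determined by $u$, giving $bu=ub=b$ automatically. This $b$ is a contraction with $b \in \frac{1}{2}{\mathfrak F}_A$, hence $1-2b$ is a contraction and the disk-algebra functional calculus of Section 3 applies to it.

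The heart of the argument is the \emph{cigar}-shaping, and I expect this to be where the real work lies. The idea is to replace $b$ by $b^{1/n}$ for large $n$. By the root theory recalled before Lemma \ref{roots} and by Lemma \ref{rootf}, the numerical range of $b^{1/n}$ lies in the sector $S_{\pi/(2n)}$, which collapses onto the positive real axis as $n \to \infty$; moreover $b^{1/n} \in \frac{1}{2}{\mathfrak F}_A$ again (since $\frac{1}{2}{\mathfrak F}_A$ is closed under roots, cf. Lemma \ref{rootf}(2) together with the $\frac{1}{2}{\mathfrak F}_A$ stability). Crucially, taking roots does not disturb the interpolation: $u(b^{1/n}) = u(b)$ for peak projections and, more to the point here, $b^{1/n} q = q b^{1/n} = q$ because $bq=q=qb$ forces $b^m q = q$ for all $m$ and hence $f(b)q = f(1)q$ for any element $f$ of the relevant functional calculus class, applied with $f(z)=z^{1/n}$; the same computation on the $u$ side preserves $b^{1/n}u = u b^{1/n} = b^{1/n}$ since $u$ acts as an identity on $\mathrm{oa}(b)^{\perp\perp}$. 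Thus $a := b^{1/n}$ retains $aq=qa=q$ and $au=ua=a$, while its numerical range, being in a thin sector $S_{\pi/(2n)}$ inside the disk $|z-\frac12|\le\frac12$ coming from $\frac12{\mathfrak F}_A$, sits inside a horizontal cigar of height $<\epsilon$ once $n$ is large enough.

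For the final clause, that $a$ is within $\epsilon$ of a genuinely positive operator in $A+A^*$, I would simply read off $\|\operatorname{Im}(a)\| = \|\operatorname{Im}(b^{1/n})\| \to 0$, which is exactly the remark made just after Lemma \ref{rootf} that $\|\operatorname{Im}(x^{1/n})\| \to 0$ for $x \in {\mathfrak r}_A$. Since $a = \operatorname{Re}(a) + i\operatorname{Im}(a)$ with $\operatorname{Re}(a) \geq 0$ a positive operator lying in $A + A^*$, and $\|a - \operatorname{Re}(a)\| = \|\operatorname{Im}(a)\| < \epsilon$ for $n$ large, this is immediate.

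The step I expect to be the main obstacle is verifying rigorously that passing to the root $b^{1/n}$ \emph{simultaneously} preserves all three conditions ($aq=q$, $au=a$, and membership in $\frac12{\mathfrak F}_A$) while shrinking the numerical range; the cleanest route is to do everything inside $\mathrm{oa}(b)^{\perp\perp}$, where $u$ (or rather the support projection $s(b)$, which $u$ dominates) is the identity and the functional calculus $z \mapsto z^{1/n}$ is genuinely available with its multiplicativity and limit laws. One must check that the compactness constraint $aq = qa = q$ is compatible with taking $n$ large uniformly; this follows because $q \leq s(b)$ and the identity $b^{1/n}q = q$ is an algebraic consequence of $bq = q$ together with $q$ being a projection, independent of $n$.
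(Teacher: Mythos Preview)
Your proposal is correct and follows essentially the same route as the paper: obtain an initial interpolant $b \in \frac{1}{2}{\mathfrak F}_A$ from the earlier Urysohn lemma, then replace it by $b^{1/n}$ for large $n$, checking that roots preserve membership in $\frac{1}{2}{\mathfrak F}_A$, the relations $b^{1/n}q=qb^{1/n}=q$ and $b^{1/n}u=ub^{1/n}=b^{1/n}$, and that the numerical range shrinks into the required cigar, with the distance-to-positive clause coming from $\|\operatorname{Im}(b^{1/n})\|\to 0$. One small citation point: the closure of $\frac{1}{2}{\mathfrak F}_A$ under roots is established in \cite[Section~2]{BRI} (via the power series for $((1-z)/2)^{1/n}$), not in Lemma~\ref{rootf}(2), which only gives $x^\alpha\in{\mathfrak c}_A$; the paper also uses that power series directly to verify $b^{1/n}q=q$.
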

 \begin{proof}  The proofs of \cite[Theorem 2.6]{BNII} and \cite[Theorem 6.6 (2)]{BRII}
show that this all can be done with $a \in \frac{1}{2} {\mathfrak F}_A$.
 Then $a^{\frac{1}{n}} q = q a^{\frac{1}{n}}  = q$, as is clear for example
using the power series form $a^{\frac{1}{n}} = \sum_{k=0}^\infty
\, { 1/n \choose k} \, (-1)^k (1-a)^k$ from \cite[Section 2]{BRI},
where it is also shown that $a^{\frac{1}{n}} \in \frac{1}{2} {\mathfrak F}_A$.
Similarly $a^{\frac{1}{n}} u = u a^{\frac{1}{n}} = a^{\frac{1}{n}}$, since $u$ is
the identity multiplier on oa$(a)$ which contains these roots \cite[Section 2]{BRI}.
That the numerical range of $a^{\frac{1}{n}}$ lies in
the desired cigar is as in the proof of \cite[Theorem 2.4]{BRI}.
The distance
$\epsilon$ assertion is explained in  \cite{BRI}
after the just cited result, indeed 
the element  is within distance
$\epsilon$ of its real part, which is positive.  \end{proof}

We now turn to the second
setting  (see e.g.\ \cite[Theorem 6.6 (1)]{BRII}), where the dominating open projection
$u$ is not required
to be in $A^{\perp \perp}$.
Of course if $A$ has no identity or cai then one cannot expect
the `interpolating' element $a$ to be in $\frac{1}{2} {\mathfrak F}_A$ or
${\mathfrak r}_A$.  This  may  be seen clearly in the case that $A$ is the functions in the disk algebra vanishing
at $0$.  Here $\frac{1}{2} {\mathfrak F}_A$ and 
${\mathfrak r}_A$ are $(0)$.  Indeed by the maximum modulus theorem for harmonic functions there are  
no nonconstant 
functions in this algebra which have nonnegative real part.
The remaining question is the approximately unital case `with positivity'.
We solve this next, also solving the questions posed at the end of \cite{BNII}.

\begin{theorem} \label{urys}  Let $A$ be an approximately unital
subalgebra of a  $C^*$-algebra $B$, and let
$q \in A^{\perp \perp}$ be a compact projection.
\begin{itemize}
\item [(1)]  If $q$ is dominated by an open projection  $u \in B^{**}$.  For any $\epsilon > 0$,
there exists an $a \in 
\frac{1}{2} {\mathfrak F}_A$ with $a q = q$,
and $\Vert a (1-u) \Vert < \epsilon$ and $\Vert (1-u)  a \Vert < \epsilon$.
Indeed this can be done with in addition
the numerical range (and spectrum) of $a$ within
a horizontal cigar centered on the line segment $[0,1]$ 
in the
$x$-axis, of height $< \epsilon$.  Again, such $a$ is within
distance $\epsilon$ of a positive operator in $A + A^*$. 
\item [(2)]  $q$ is a weak* limit of some net $(y_t) \in \frac{1}{2}
\, {\mathfrak F}_A$ 
with $y_t q = q y_t = q$.
\end{itemize} \end{theorem}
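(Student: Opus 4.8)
My plan treats the two parts separately, reducing each to the first-setting Urysohn lemma (Theorem \ref{urysII}) or to the known second-setting lemma of \cite{BRII}, and then exploiting roots. For part (1) the plan is to start from the existing second-setting lemma \cite[Theorem 6.6 (1)]{BRII}, whose proof produces, for any prescribed $\delta > 0$, an element $f \in \frac{1}{2}{\mathfrak F}_A$ with $fq = qf = q$ and $\Vert f(1-u)\Vert, \Vert (1-u)f\Vert < \delta$ (if membership in $\frac{1}{2}{\mathfrak F}_A$ is not immediate from that proof, it can be arranged by multiplying commuting interpolants and invoking the earlier fact that the product of the $n$th roots of $n$ mutually commuting members of $\frac{1}{2}{\mathfrak F}_A$ again lies in $\frac{1}{2}{\mathfrak F}_A$). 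I would then replace $f$ by its root $a = f^{1/n}$. Since $f \in \frac{1}{2}{\mathfrak F}_A$ its roots lie in $\frac{1}{2}{\mathfrak F}_A$, and $a q = q$ follows from $(1-f)q = 0$ together with the power series $f^{1/n} = \sum_{k=0}^\infty \binom{1/n}{k}(-1)^k (1-f)^k$ used in the proof of Theorem \ref{urysII}. The thin-cigar claim and the within-$\epsilon$-of-a-positive-operator claim then hold exactly as in Theorem \ref{urysII}, since $\Vert {\rm Im}(f^{1/n})\Vert \to 0$ and the numerical range of $f^{1/n}$ shrinks into $S_{\pi/(2n)}$ intersected with the disk $|z-\frac{1}{2}| \le \frac{1}{2}$.

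The main obstacle in part (1) is to keep $\Vert a(1-u)\Vert$ and $\Vert (1-u)a\Vert$ small after taking the root. Here I would use the Balakrishnan representation exactly as in the proof of Lemma \ref{Bal}: writing
\[
f^{1/n}(1-u) = \frac{\sin(\pi/n)}{\pi}\int_0^\infty t^{1/n-1}\,(t+f)^{-1}f\,(1-u)\,dt,
\]
and bounding the integrand by $\min\{1,\delta/t\}$ via $\Vert (t+f)^{-1}f\Vert = \Vert {\mathfrak F}(f/t)\Vert \le 1$, $\Vert (t+f)^{-1}\Vert \le \frac{1}{t}$, and $\Vert f(1-u)\Vert < \delta$. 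Splitting the integral at $t=\delta$ yields a bound of the shape $C_n\,\delta^{1/n}$ with $C_n$ depending only on $n$. The crucial point is the order of quantifiers: first fix $n$ large enough to force both the cigar condition and the distance-$\epsilon$ condition, and only then choose $\delta$ so small that $C_n\delta^{1/n}<\epsilon$ (legitimate since $\delta^{1/n}\to 0$ as $\delta\to 0$ for fixed $n$). The estimate for $\Vert (1-u)a\Vert$ is symmetric, using $(t+f)^{-1}f=f(t+f)^{-1}$ and $\Vert (1-u)f\Vert<\delta$.

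For part (2), since $q$ is compact in $A^{**}$ it is closed with respect to $A^1$, so by the theory of open and compact projections for operator algebras (\cite{BNII}, in its complemented Akemann--Pedersen form) $q$ is the infimum of a decreasing net $(u_\lambda)$ of open projections $u_\lambda \in A^{**}$ with $u_\lambda \ge q$. For each $\lambda$, Theorem \ref{urysII} applied to the pair $q \le u_\lambda$ gives $y_\lambda \in \frac{1}{2}{\mathfrak F}_A$ with $y_\lambda q = q y_\lambda = q$ and $y_\lambda u_\lambda = u_\lambda y_\lambda = y_\lambda$. I claim $y_\lambda \to q$ weak*. Fix any $\mu$; for all $\lambda$ with $u_\lambda \le u_\mu$ (eventually) we get $u_\mu y_\lambda u_\mu = y_\lambda$ from $y_\lambda = u_\lambda y_\lambda u_\lambda$ and $u_\lambda u_\mu = u_\lambda$. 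Let $z$ be any weak* cluster point of the bounded net $(y_\lambda)$; passing to a subnet and using weak* continuity of one-sided multiplication by the fixed projections $q$ and $u_\mu$ gives $zq = qz = q$ and $u_\mu z u_\mu = z$ for every $\mu$. Letting $u_\mu \searrow q$ (so that $u_\mu \to q$ strongly in a normal representation) in $u_\mu z u_\mu = z$ yields $z = qzq$, and hence $z = qzq = q$ using $qz = q$. Thus $q$ is the unique weak* cluster point of the bounded net $(y_\lambda)$, so $y_\lambda \to q$ weak*, which is part (2).

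The step I expect to be most delicate is the norm estimate in part (1): forcing the off-$u$ norm to survive the passage to the root while simultaneously meeting the cigar and near-positivity requirements is what dictates the careful ordering ``$n$ first, then $\delta$''. For part (2) the only non-formal input is the existence of a decreasing net of open projections in $A^{**}$ with infimum the compact projection $q$; granting this, the cluster-point computation is routine.
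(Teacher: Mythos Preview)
Your approach to part (1) has a circularity problem. You begin by invoking \cite[Theorem 6.6 (1)]{BRII} to obtain $f \in \frac{1}{2}{\mathfrak F}_A$ with $fq=qf=q$ and $\|f(1-u)\|,\|(1-u)f\|<\delta$. But producing such an $f$ \emph{in} $\frac{1}{2}{\mathfrak F}_A$ when $A$ is approximately unital but not unital is exactly the content of the first assertion of (1); as the paragraphs immediately preceding the theorem make explicit, the previously known second-setting lemma does not deliver the positivity, and the present theorem is stated precisely to fill that gap. Your parenthetical repair (``multiply commuting interpolants'') does not work as written: to apply the product-of-roots fact you would already need commuting elements of $\frac{1}{2}{\mathfrak F}_A$ fixing $q$, and you offer no mechanism for producing a second such element that commutes with $f$ while keeping the off-$u$ norm small. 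Your Balakrishnan estimate $\|f^{1/n}(1-u)\|\le C_n\delta^{1/n}$ is correct and is a clean way to pass to the cigar refinement \emph{once} the first assertion of (1) is in hand, but it does not establish that first assertion.

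The paper avoids this circularity by reversing the logical order: it proves (2) first and feeds it into the proof of \cite[Theorem 2.1]{BNII} to obtain (1). For (2) with $q=u(x)$ a peak projection, the paper passes to the commutative algebra ${\rm oa}(x)$, uses the already-known \emph{unital} case to get $z_t\in\frac{1}{2}{\mathfrak F}_{A^1}$ with $z_tq=q$ and $z_t\to q$ weak*, and then sets $y_t=z_t^{1/2}x^{1/2}$. Here the product-of-roots fact applies legitimately because $z_t$ and $x$ commute (both lie in ${\rm oa}(x)^1$), and the factor $x^{1/2}$ forces $y_t\in A$; Lemma \ref{strsq} gives $y_t\to q$. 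General compact $q$ is then handled via the peak decomposition $q=\inf u(x_s)$ from \cite[Theorem 3.4]{BNII}, and the cigar refinement of (1) comes from replacing $y_t$ by $y_t^{1/m}$.

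Your alternative route to (2), via a decreasing net of open projections $u_\lambda\in A^{**}$ with infimum $q$ and a cluster-point argument, is appealing and the cluster-point computation itself is fine. However, the existence of such a net of opens \emph{in $A^{**}$} (as opposed to $B^{**}$ or $(A^1)^{**}$) is not the characterization of compactness recorded in \cite{BNII}; what is available there, and what the paper uses, is that compact projections are decreasing limits of peak projections $u(x_s)$, which are compact rather than open. You would need to supply a separate argument producing genuinely open $u_\lambda\in A^{\perp\perp}$ shrinking to $q$ before Theorem \ref{urysII} can be invoked.
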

 \begin{proof}    (2) \  First assume that $q = u(x)$ for some $x \in \frac{1}{2} {\mathfrak F}_A$.
We may replace $A$ by the commutative algebra ${\rm oa}(x)$, and then $q$ is a minimal projection,
 since $q \, p(x) \in \Cdb q$ for any polynomial $p$.  
Now $q$ is closed and compact in $(A^{1})^{**}$, so the unital case of (2), which follows from 
\cite[Theorem 2.24]{BRI} and the closing remarks to \cite{BNII}, there is a net 
$(z_t) \in \frac{1}{2}
\, {\mathfrak F}_{A^1}$ with $z_t q = q z_t = q$ and $z_t \to q$ weak*.  Let 
$y_t  = z_t^{\frac{1}{2}} x^{\frac{1}{2}}$.  By the paragraph after Lemma \ref{commt} (namely by 
a result from \cite{BBS} referenced there), 
 we have $y_t 
\in \frac{1}{2} {\mathfrak F}_{A^1}
\cap A = \frac{1}{2} {\mathfrak F}_{A}$.  Also, $x^{\frac{1}{2}} q = q x^{\frac{1}{2}} = q$  by considerations
used in the last proof, and similarly $z_t^{\frac{1}{2}} q = q z_t^{\frac{1}{2}} = q$.
Thus $y_t^{\frac{1}{2}} q = q y_t^{\frac{1}{2}} = q$.   If $A$ is represented nondegenerately on a Hilbert
space $H$, and we identify $1_{A^1}$ with $I_H$, then
 for any $\zeta \in H$ we have by Lemma \ref{strsq} that 
$$\Vert (y_t - q) \zeta \Vert
= \Vert (z_t^{\frac{1}{2}} - q) x^{\frac{1}{2}} \zeta \Vert
\leq K \Vert (z_t - q) x^{\frac{1}{2}} \zeta \Vert^{\frac{1}{2}} \to 0 .$$
Thus $y_t \to q$ strongly and hence weak*.

(1) \ If $A$ is unital, the first assertion of (1) 
is \cite[Theorem 2.24]{BRI}.   In the approximately unital case,
by the ideas in the closing remarks to \cite{BNII},
the first assertion of (1) should be equivalent to (2).   Indeed, by feeding such a net $(y_t)$ into
the proof of \cite[Theorem 2.1]{BNII} one obtains the
first assertion of (1).

Next, for an arbitrary compact projection $q \in  A^{\perp \perp}$, by \cite[Theorem 3.4]{BNII}
there exists a net $x_s \in \frac{1}{2} {\mathfrak F}_A$  with $u(x_s) \searrow q$.
By the last paragraph there exist nets $y_t^s \in \frac{1+\epsilon}{2} \, {\mathfrak F}_A$
with $y_t^s \, u(x_s) = u(x_s) \, y_t^s = u(x_s)$, and $y_t^s \to u(x_s)$ weak*.
Then $$y_t^s q = y_t^s \, u(x_s) \, q = u(x_s) \, q = q,$$ for each $t, s$.  It is clear that
the $y_t^s$ can be arranged into a net weak* convergent to $q$.
This proves the Claim.

Finally, we obtain the `cigar' assertion.
For $(y_t)$ as in our Claim, we feed the net $(y_t^{\frac{1}{m}})$ into
the proof of \cite[Theorem 2.1]{BNII}.  Here $m$ is a fixed integer so large that the
numerical range of $y_t^{\frac{1}{m}}$ lies within the appropriate  horizontal cigar,
as in the proof of the previous theorem.  As in that
proof $y_t^{\frac{1}{m}} q = q y_t^{\frac{1}{m}} = q$ and
$y_t^{\frac{1}{m}} \to q$ weak*.    The distance
assertion also follows as in the previous proof.  \end{proof}

{\bf Remarks.}  1)   \ The  recent paper  \cite{CGK} 
contains a special kind of  `Urysohn lemma with positivity' for function algebras.  Some of the conditions of our Urysohn lemma are  
more general than theirs; also our interpolating elements have range in a cigar in the right half plane, as opposed to their Stolz region which 
contains $0$ as an interior point.  Hopefully our results  could be helpful
in such applications.  

2) \ Part (2) of the theorem easily gives a characterization of compact projections $q$  in 
approximately unital operator algebras, as weak* limits of nets $(y_t)$ in $A$, where
$y_t q = q$ and $y_t \in  \frac{1}{2} {\mathfrak F}_A$, and $y_t$ is `as close to being positive as
one likes'.

\section{A semisimple operator algebra which is a modular annihilator algebra but is not weakly compact}
  \label{notw}

  In \cite[p.\ 76]{ABR} we asked if every approximately unital semisimple operator algebra which is a modular annihilator algebra, is weakly compact, or is nc-discrete.     We recall that $A$ is  {\em nc-discrete} if all the open projections in $A^{**}$ are
also closed (or equivalently lie in the multiplier algebra $M(A)$).    In this section 
we will construct an interesting operator algebra $A$ which answers these questions in the negative.  

Let $(c_n)$ be an unbounded increasing sequence in $(0,\infty)$.  For each $n \in \Ndb$ let 
$d_n$ be the diagonal matrix in $M_n$ with 
$c_n^k$ as the $k$th  diagonal entry.  If $M$ is the von Neumann algebra $\oplus_n^\infty \, (M_n  \oplus
M_n)$, we let $N$ be its weak*-closed unital subalgebra consisting of tuples $((x_n , d_n x_n d_n^{-1}))$,
for all $(x_n) \in \oplus_n^\infty \, M_n$.       We define $A_{00}$ to
be the finitely supported tuples in $N$, and  $A_0$ to be the closure of $A_{00}$.
 That is, $A_0$ is the intersection of the 
$c_0$-sum $C^*$-algebra  $\oplus_n^{\circ} \, (M_n  \oplus
M_n)$ with $N$.   We sometimes simply write $(x_n)$ for the associated tuple in $N$.   

\begin{lemma} \label{lem1}  Let $A$ be any closed subalgebra of $N$ containing $A_0$.  Then $A$ is 
semisimple.  \end{lemma}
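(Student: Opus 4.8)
The plan is to show directly that the Jacobson radical $\mathrm{rad}(A)$ is zero, rather than to try to deduce semisimplicity of $A$ from that of $N$ or $M$. The latter route is hopeless: a subalgebra of a semisimple algebra need not be semisimple, and $N$ itself contains many non-semisimple closed subalgebras. The hypothesis $A \supseteq A_0$ is precisely what makes the argument work, and I would organize everything around exploiting it.

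For each $n \in \Ndb$, let $\pi_n \colon A \to M_n$ be the map sending a tuple $((x_k, d_k x_k d_k^{-1}))_k \in A \subseteq N$ to its first block entry $x_n$ in the $n$th summand. Since multiplication in $M$ is coordinatewise and the first-coordinate projection $M_n \oplus M_n \to M_n$ is an algebra homomorphism, each $\pi_n$ is a contractive algebra homomorphism. First I would record that $\pi_n$ is surjective: because $A \supseteq A_0 \supseteq A_{00}$, for any $x \in M_n$ the finitely supported tuple whose $n$th entry is $(x, d_n x d_n^{-1})$ and whose other entries vanish already lies in $A_{00} \subseteq A$, and $\pi_n$ sends it to $x$.

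Next I would invoke the standard ring-theoretic fact that a surjective algebra homomorphism carries the Jacobson radical into the Jacobson radical, so $\pi_n(\mathrm{rad}(A)) \subseteq \mathrm{rad}(M_n)$. Since $M_n$ is a simple unital algebra, $\mathrm{rad}(M_n) = (0)$, whence $\pi_n(\mathrm{rad}(A)) = (0)$ for every $n$. Thus if $a = ((x_n, d_n x_n d_n^{-1}))_n \in \mathrm{rad}(A)$, then $x_n = \pi_n(a) = 0$ for all $n$; but an element of $N$ is completely determined by its sequence of first-block entries $(x_n)$, so $a = 0$. Hence $\mathrm{rad}(A) = (0)$ and $A$ is semisimple.

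The only genuinely delicate point is the surjectivity of $\pi_n$, which is exactly where $A \supseteq A_0$ enters; everything else is formal. One should also double-check that the radical-into-radical containment is valid in the nonunital setting, since $A_0$ (and possibly $A$) need not be unital. This holds because $\mathrm{rad}(A)$ is the largest ideal consisting of quasi-regular elements, a surjective homomorphism maps quasi-regular elements to quasi-regular elements, and it sends the ideal $\mathrm{rad}(A)$ onto an ideal of $M_n$; that ideal therefore lies in $\mathrm{rad}(M_n)$. Note that closedness of $A$ is not needed for this conclusion.
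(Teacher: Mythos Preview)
Your proof is correct. The paper's argument is more hands-on: given a nonzero $x=(x_n)\in A$, it picks an $m$ and a basis vector $e_i$ with $x_m e_i\neq 0$, chooses $y_m\in M_m$ with $y_m x_m e_i=e_i$, sets $y_n=0$ for $n\neq m$ so that $y=(y_n)\in A_0\subset A$, and observes that $I-yx$ kills (the copy of) $e_i$ and hence is not invertible in $A^1$; thus $x\notin\mathrm{rad}(A)$. Your route is more structural: you factor through the surjective coordinate homomorphisms $\pi_n:A\to M_n$ and invoke the general fact that a surjective algebra homomorphism carries the Jacobson radical into the Jacobson radical. Both arguments use the hypothesis $A\supseteq A_0$ in exactly the same place (yours to get surjectivity of $\pi_n$, the paper's to guarantee the constructed $y$ lies in $A$), and both ultimately rest on $\mathrm{rad}(M_n)=0$. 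Your version is cleaner and makes the role of the hypothesis more transparent; the paper's version is self-contained and avoids citing the radical-into-radical lemma. As you note, closedness of $A$ is not actually used in either argument.
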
 \begin{proof}  For any nonzero $x = (x_n) \in A$, choose
$m$ and $i$ with $z = x_m e_i \neq 0$, where $(e_i)$ is the usual basis of 
$\Cdb^m$.   Choose $y_m \in M_m$ with $y_m z = e_i$, and otherwise set $y_n = 0$.
Then $y  = (y_n) \in A_0$, and the copy of $e_i$ is in the kernel of $I-yx$.  
Hence $I - yx$ is not invertible 
in $A^1$, and so $x$ is not in the Jacobson radical by a well known 
characterization of that radical.  Thus $A$ is
semisimple.  \end{proof} 

Endow $M_n$ with a norm $p_n(x) = \max \{ \Vert x \Vert , \Vert d_n x d_n^{-1} \Vert \}$.
Then $N \cong \oplus^\infty_n \, (M_n, p_n(\cdot))$ isometrically, and we 
write $p(\cdot)$ for the norm on the latter space, so 
$p((x_n)) = \sup_n \, p_n(x_n)$.   We sometimes view $p$ as the  norm on $N$ via 
the above identification.   
Let $L_n$ be the left shift on $\Cdb^n$, so that in particular $L_n e_1 = 0$.
Note that $d_n L_n d_n^{-1} = \frac{1}{c_n} \, L_n$, and that $p_n(L_n) = 1$ if $n \geq 2$.  
For $n, k \in \Ndb$ with $n \geq k$ define an `integer interval' $E_{n,k} = \Ndb_0 \cap [\frac{n}{k} ,
\frac{2n}{k}]$.  Set $\mu_{n,k} = |E_{n,k}|$ if $n \geq k$, with $\mu_{n,k} = 1$ if 
$n < k$.  Then $\mu_{n,k}$ is strictly positive for all $n, k$.  For $n \geq k$  
define $u_{n,k} = \frac{1}{\mu_{n,k}} \, \sum_{i \in E_{n,k}} \, (L_n)^i \in M_n$.
If $n < k$ set $u_{n,k} = I_n$.   Define $u_k = (u_{n,k})_{n \in \Ndb}$.  We have 
$$p_n(u_{n,k}) \leq \max_{i \in E_{n,k}} \, p((L_n)^i) \leq 1 , \qquad n \geq k,$$   
and so  $$p(u_k) \leq 1 , \qquad k \in \Ndb .$$

The operator algebra we are interested in is
$$A = \{ a \in N : p(a u_k -a) + p(u_k a - a) \to 0 \} .$$ 
This will turn out to be
the largest subalgebra of $N$ having $(u_k)$ as a cai. 
First,   a preliminary estimate:

\begin{lemma} \label{lem2}  Let $L \in {\rm Ball}(B)$ for a Banach algebra $B$. 
Suppose that  $E_1$ is a set of $\mu_1$ integers from $[0, n]$, and 
$E_2$ is a set of $\mu_2$ consecutive nonnegative integers.    If
$u_i = \frac{1}{\mu_{i}} \, \sum_{i \in E_{i}} \, L^i$ then
$$\Vert u_1 u_2 - u_2 \Vert \leq \frac{2 n}{\mu_2}.$$
 \end{lemma}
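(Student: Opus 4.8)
The plan is to expand the product into a double sum and exploit the single structural hypothesis that carries the estimate: the set $E_2$ consists of \emph{consecutive} integers. First I would relabel the summation indices to avoid the clash with the subscripts on $u_1, u_2$, writing $u_1 = \frac{1}{\mu_1} \sum_{a \in E_1} L^a$ and $u_2 = \frac{1}{\mu_2} \sum_{b \in E_2} L^b$. Since $E_1$ has exactly $\mu_1$ elements, averaging $u_2$ over $a \in E_1$ leaves it unchanged, so I can group the two terms under a common double sum:
$$ u_1 u_2 - u_2 = \frac{1}{\mu_1 \mu_2} \sum_{a \in E_1} \Bigl( \sum_{b \in E_2} L^{a+b} - \sum_{b \in E_2} L^b \Bigr). $$

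The key step is to estimate the inner bracket for a fixed $a \in E_1$. Writing $E_2 = \{ m, m+1, \dots, m + \mu_2 - 1 \}$, the index sets $E_2$ and $a + E_2 = \{ a+m, \dots, a+m+\mu_2 - 1 \}$ are translates of one another by $a \geq 0$, so their symmetric difference consists of at most $2 \min(a, \mu_2) \leq 2a$ integers, namely the (at most) $a$ extra indices at the top of $a+E_2$ and the (at most) $a$ missing indices at the bottom of $E_2$. Hence, after the overlapping powers cancel, $\sum_{b \in E_2} L^{a+b} - \sum_{b \in E_2} L^b$ is a signed sum of at most $2a$ powers $L^j$. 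Since $L \in {\rm Ball}(B)$ we have $\Vert L^j \Vert \leq \Vert L \Vert^j \leq 1$, so the triangle inequality bounds the bracket in norm by $2a$.

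Finally I would use $a \leq n$ (since $E_1 \subseteq [0,n]$) and sum, obtaining
$$ \Vert u_1 u_2 - u_2 \Vert \leq \frac{1}{\mu_1 \mu_2} \sum_{a \in E_1} 2a \leq \frac{1}{\mu_1 \mu_2} \cdot 2n \mu_1 = \frac{2n}{\mu_2}, $$
as desired. The only point requiring care — the ``main obstacle,'' though it is really bookkeeping rather than difficulty — is the count of the symmetric difference of the two translated intervals: one must check that the overlap cancellation leaves precisely the top $a$ and bottom $a$ terms, capped by $\mu_2$ in the degenerate range $a \geq \mu_2$ where the intervals become disjoint. Everything else is the triangle inequality together with submultiplicativity $\Vert L^j \Vert \leq \Vert L \Vert^j \leq 1$.
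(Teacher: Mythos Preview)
Your proof is correct, but the route differs from the paper's. The paper first disposes of the trivial case $n \geq \mu_2$ by the crude bound $\Vert u_1 u_2 - u_2 \Vert \leq 2 \leq 2n/\mu_2$, and then in the main case $n < \mu_2$ collects the product into a single weighted sum $u_1 u_2 = \sum_m \lambda_m L^m$, identifies the coefficients $\lambda_m$ as $1/\mu_2$ throughout the middle range $[m_0 + n,\, m_0 + \mu_2)$, and bounds the remaining $2n$ edge coefficients by $1/\mu_2$. Your argument instead fixes $a \in E_1$ first and uses the interval--translate cancellation $\bigl|\,(a+E_2)\,\triangle\,E_2\,\bigr| = 2\min(a,\mu_2) \leq 2a$, so that each inner bracket has norm at most $2a \leq 2n$; averaging over the $\mu_1$ values of $a$ then gives $2n/\mu_2$ directly. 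The advantage of your decomposition is that the $\min(a,\mu_2)$ absorbs the degenerate case $n \geq \mu_2$ automatically, so no case split is needed; the paper's version, on the other hand, makes the coefficient structure of $u_1 u_2$ more explicit, which is sometimes useful elsewhere. Both arguments are equally elementary and yield the identical constant.
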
 \begin{proof}   If $n \geq \mu_2$ 
then 
$$\Vert u_1 u_2 - u_2 \Vert \leq \Vert u_1 \Vert \Vert u_2  \Vert + \Vert u_2  \Vert
\leq 2 \leq \frac{2n}{\mu_2}.$$
So we may assume that $n < \mu_2$.  Let $m_0 = \min \, E_2$.  
Then 
$$u_1 u_2 = \frac{1}{\mu_1 \, \mu_2} \, \sum_{j \in E_1, k \in E_2} \,
 L^{j+k}
= \sum_{m_0 \leq m < m_0 + n + \mu_2} \, \lambda_m L^m , $$
where $\lambda_m$ is $\frac{1}{\mu_1 \, \mu_2}$ times the number of pairs in 
$E_1 \times E_2$ which sum to $m$.  
  Since 
$$\mu_1 \leq n+ 1 \leq \mu_2,$$
and since the number of such pairs cannot exceed $\mu_1 = |E_1|$, 
we have $$0 \leq \lambda_m \leq \frac{1}{\mu_2}.$$  
If $m \in [m_0 + n, m_0 + \mu_2)$ then $m - k \in E_2$ 
for any integer $k$ in $[0,n]$, and so $m - E_1 \subset E_2$.
We deduce that $$\lambda_m = \frac{1}{\mu_2} , \qquad m \in [m_0 + n, m_0 + \mu_2
).$$ Since $u_2 = \frac{1}{\mu_2}  \, \sum_{m_0 \leq m < m_0 + \mu_2} \, L^m$ we have
$$u_1 u_2 - u_2 = \sum_{m_0 \leq m < m_0 + n} \,
(\lambda_m - \frac{1}{\mu_2}) \, L^m \, + \, \sum_{m_0 + \mu_2
\leq m <  m_0 + n + \mu_2} \, \lambda_m L^m .$$
No coefficient in the last sum has modulus greater than $\frac{1}{\mu_2}$, and there
are $2n$ nonzero coefficients, so 
$$\Vert u_1 u_2 - u_2 \Vert \leq \frac{2n}{\mu_2} \, \max_m \, 
\Vert L^m \Vert = \frac{2n}{\mu_2} .$$
as desired.   \end{proof}           

\begin{corollary} \label{isab}  Let $A = \{ a \in N : p(a u_k -a) + p(u_k a - a) \to 0 \}$.
Then   $A$ is a semisimple operator algebra with cai $(u_k)$, and $A_0$ 
is an ideal in $A$.
\end{corollary}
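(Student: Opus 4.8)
The plan is to establish the four assertions---$A$ is a closed subalgebra of $N$, it is semisimple, $(u_k)$ is a cai, and $A_0$ is an ideal---in that order, since each uses the previous. First I would check that $A$ is a closed subalgebra of $N$ (hence an operator algebra). For the multiplicative property, if $a,b\in A$ then $ab\,u_k-ab=a(b\,u_k-b)$ and $u_k\,ab-ab=(u_k a-a)b$, so $p(ab\,u_k-ab)\le p(a)\,p(b\,u_k-b)\to 0$ and similarly on the other side; thus $ab\in A$. Closedness is a routine $3\epsilon$ estimate: if $a_j\to a$ with $a_j\in A$, then $p(a\,u_k-a)\le 2\,p(a-a_j)+p(a_j\,u_k-a_j)$ using $p(u_k)\le 1$, and choosing $j$ and then $k$ large makes the right-hand side small. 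Next I would show $A_0\subseteq A$, which gives semisimplicity at once via Lemma~\ref{lem1}. Since $A$ is closed and $A_0=\overline{A_{00}}$, it suffices to treat $A_{00}$: if $a=(a_n)$ is supported on $n\le M$, then for every $k>M$ and $n\le M$ we have $n<k$, so $u_{n,k}=I_n$ and hence $a\,u_k=a=u_k a$; thus the defining sum is $0$ for $k>M$ and $a\in A$.

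For the cai, note that $p(u_k)\le 1$ is already recorded, and that the approximate-identity property $a\,u_k\to a$, $u_k a\to a$ for $a\in A$ is literally the defining condition of $A$. The one substantive point---and the step I expect to be the main obstacle---is that $u_j\in A$ for each $j$, i.e.\ that $p(u_j u_k-u_j)+p(u_k u_j-u_j)\to 0$ as $k\to\infty$. Since $u_{n,j}$ and $u_{n,k}$ are both polynomials in $L_n$ they commute, so both terms collapse to $p(u_k u_j-u_j)$. Here I would apply Lemma~\ref{lem2} inside the Banach algebra $(M_n,p_n)$: the norm $p_n$ is submultiplicative and $p_n(L_n)\le 1$, so $L_n$ lies in its unit ball. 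Taking $E_1=E_{n,k}$ (whose elements are bounded by $2n/k$) and $E_2=E_{n,j}$ yields $p_n(u_{n,k}u_{n,j}-u_{n,j})\le \tfrac{2\lfloor 2n/k\rfloor}{\mu_{n,j}}\le \tfrac{4n/k}{\mu_{n,j}}$. For $n<k$ this term vanishes since $u_{n,k}=I_n$; for $n\ge k\ge 2j$ one has $\mu_{n,j}\ge \tfrac{n}{j}-1\ge \tfrac{n}{2j}$, whence $p_n(u_{n,k}u_{n,j}-u_{n,j})\le \tfrac{8j}{k}$. Taking the supremum over $n$ gives $p(u_k u_j-u_j)\le 8j/k\to 0$, so $u_j\in A$ and $(u_k)$ is a cai.

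Finally, $A_0$ is an ideal in $A$. Identifying $A_0$ with the tuples $(x_n)\in N$ for which $p_n(x_n)\to 0$, for $a=(a_n)\in A$ and $b=(b_n)\in A_0$ I would estimate $p_n(a_n b_n)\le p(a)\,p_n(b_n)\to 0$ and likewise $p_n(b_n a_n)\to 0$, so both $ab$ and $ba$ lie in $A_0$. The crux of the whole argument is the bookkeeping in the previous paragraph, namely that $\mu_{n,j}$ is of order $n/j$ while $\max E_{n,k}$ is of order $n/k$, so that Lemma~\ref{lem2} produces a quotient of order $j/k$ that is controlled \emph{uniformly} in $n$; every other step is formal.
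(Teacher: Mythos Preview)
Your proof is correct and follows essentially the same approach as the paper's. The paper establishes $u_r\in A$ via the same application of Lemma~\ref{lem2} (with the same roles for $E_1=E_{n,k}$ and $E_2=E_{n,r}$), obtaining $p(u_k u_r-u_r)\le 2/t$ for $k\ge 2tr$; your constant $8j/k$ is just a minor repackaging of the same $O(j/k)$ bound. The paper is terser on the routine points you spelled out (closedness, subalgebra, $A_{00}\subset A$), and for the ideal assertion it simply notes that each $M_n'$, and hence $A_0$, is already an ideal in the larger algebra $N$, which is equivalent to your $c_0$-sum argument.
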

\begin{proof}   We first show $u_r \in A$ for all $r \in \Ndb$.  Let $k \geq r$.
If $n \geq k$ 
then $E_{n,k}$ is a subset of $[0, \frac{2n}{k}]$, and $\mu_{n,k}$ is 
either $\lfloor \frac{n}{k} \rfloor$ or $\lfloor \frac{n}{k}+1 \rfloor$.
By Lemma \ref{lem2}, we have 
$$p_n(u_{n,k} u_{n,r} - u_{n,r}) \leq \frac{2 \, \lfloor \frac{2n}{k} \rfloor}{\lfloor \frac{n}{r} \rfloor} , 
r \geq n \geq k .$$
If $n < k$ then $p_n(u_{n,k} u_{n,r} - u_{n,r}) = 0$.  If $k \geq 2 t r$ for an integer
$t > 1$ then $$\frac{2 \, \lfloor \frac{2n}{k} \rfloor}{\lfloor \frac{n}{r} \rfloor} \leq
\frac{\lfloor\frac{n}{tr} \rfloor}{\lfloor \frac{n}{r} \rfloor} \leq \frac{1}{t} .$$
Thus $p_n(u_{n,k} u_{n,r} - u_{n,r}) \leq \frac{2}{t}$ for $k \geq 2 t r$, so
$$p(u_{k} u_r - u_r) = \sup_n \, p_n(u_{n,k} u_{n,r} - u_{n,r}) \leq \frac{2}{t}
, \qquad k \geq 2 t r .$$
So $u_k u_r \to u_r$ with $k$, and so $u_r \in A$ for all $r  \in \Ndb$.      

It is now obvious that $A$, being a subalgebra of the operator algebra $N$,
 is an operator algebra with cai $(u_k)$.  
It is elementary that for any matrix $x$ in
the copy $M_n'$ of $M_n$ in $A_0$ we have $x u_k \to x$ and $u_k x \to x$,
since for example $u_k x = x$ for $k > n$. 
 Hence $A_0 \subset A$, so that  $A$ is semisimple by Lemma \ref{lem1}.
Since $M_n'$ is an ideal in $N$, so is $A_0$, giving the last statement.
  \end{proof}  

In the following result, and elsewhere, $\Vert \cdot \Vert$ denotes 
the usual norm on $M_n$ or on $\oplus^\infty \, M_n$.
\begin{lemma} \label{lem3}  For each $n \in \Ndb$ and $k \leq n$,
we have  $\Vert u_{n,k} \Vert \geq 1 - \frac{2}{k}$ and 
$\Vert u_{n,k}^3 \Vert \geq 1 - \frac{6}{k}$.
\end{lemma}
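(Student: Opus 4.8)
The plan is to recognize $u_{n,k}$ as a nonnegative ``averaging'' polynomial in the nilpotent shift and to bound its operator norm from below by testing it against flat indicator vectors. Writing $p(z) = \frac{1}{\mu_{n,k}} \sum_{i \in E_{n,k}} z^i$, we have $u_{n,k} = p(L_n)$, a convex combination of the powers $L_n^i$; in particular $p(1) = 1$, and by von Neumann's inequality $\Vert u_{n,k}\Vert \leq \sup_{|z| \leq 1} |p(z)| = 1$, so the content of the lemma is a nearly matching lower bound. I would obtain it from a single bilinear estimate. Since $L_n e_j = e_{j-1}$ and $L_n e_1 = 0$, one has $\langle u_{n,k} e_q, e_p \rangle = \frac{1}{\mu_{n,k}}$ when $q - p \in E_{n,k}$ and $0$ otherwise. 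For integer intervals $I, J \subseteq \{1, \dots, n\}$, set $\zeta = |I|^{-1/2} \sum_{p \in I} e_p$ and $\eta = |J|^{-1/2} \sum_{q \in J} e_q$, two unit vectors; then
\[
\langle u_{n,k} \eta, \zeta \rangle = \frac{1}{\mu_{n,k}\sqrt{|I|\,|J|}} \, \#\{(p,q) \in I \times J : q - p \in E_{n,k}\}.
\]

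The key step is to choose $I, J$ so that the averaging is \emph{lossless}. Take $I = \{1, \dots, n - \max E_{n,k}\}$ and $J = \{1 + \min E_{n,k}, \dots, n\}$. Because $E_{n,k}$ consists of consecutive integers, for each $p \in I$ the whole translate $p + E_{n,k}$ lies inside $J$ (its top endpoint $p + \max E_{n,k}$ never exceeds $n$ by the choice of $I$); hence each $p$ contributes exactly $\mu_{n,k} = |E_{n,k}|$ admissible pairs, the count equals $\mu_{n,k}\,|I|$, and the displayed quantity collapses to $\sqrt{|I|/|J|}$. Now $\min E_{n,k} \geq n/k$ and $\max E_{n,k} \leq 2n/k$, so $|I| = n - \max E_{n,k} \geq n(1 - 2/k)$ while $|J| \leq n$, whence $|I|/|J| \geq 1 - 2/k$. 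Therefore $\Vert u_{n,k} \Vert \geq \langle u_{n,k}\eta,\zeta\rangle = \sqrt{|I|/|J|} \geq \sqrt{1 - 2/k} \geq 1 - 2/k$, the assertion being vacuous when $k \leq 2$.

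For the cube I would rerun the identical argument with $p^3$. Expanding, $u_{n,k}^3 = \mu_{n,k}^{-3}\bigl(\sum_{i \in E_{n,k}} L_n^i\bigr)^3$ is again a nonnegative combination $\sum_s c_s L_n^s$ of shift powers with $\sum_s c_s = 1$, now supported on exponents $s \in [3\min E_{n,k}, 3\max E_{n,k}]$. Choosing instead $I = \{1, \dots, n - 3\max E_{n,k}\}$ and $J = \{1 + 3\min E_{n,k}, \dots, n\}$, the same lossless computation (for each $p \in I$ the full support $p + s$ lands in $J$, so $\sum_{q \in J} c_{q-p} = 1$) gives $\langle u_{n,k}^3 \eta, \zeta\rangle = \sqrt{|I|/|J|}$, and now $|I|/|J| \geq (n - 6n/k)/n = 1 - 6/k$, so $\Vert u_{n,k}^3\Vert \geq \sqrt{1 - 6/k} \geq 1 - 6/k$ (vacuous for $k \leq 6$).

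The one thing to watch — and the only real obstacle — is the edge effect: the translates $p + E_{n,k}$ must not run off the end of the $n \times n$ matrix, which is precisely what forces the window $I$ to shed its top $\max E_{n,k}$ (respectively $3\max E_{n,k}$) indices. Measured against $|J| \leq n$, this shedding is exactly what produces the $2/k$ (respectively $6/k$) deficit, so the constants in the statement are the natural ones; one should only record separately the trivial small-$k$ regimes, where the right-hand sides are nonpositive.
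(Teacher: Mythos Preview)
Your argument is correct, but the paper's route is shorter and uses a single test vector rather than two. The paper simply takes the flat unit vector $\eta = n^{-1/2}(1,\dots,1)$ on all of $\{1,\dots,n\}$ and observes that $\langle L_n^{\,j}\eta,\eta\rangle = 1 - j/n$ for $0 \le j \le n$; since $u_{n,k}$ is a convex combination of powers $L_n^{\,j}$ with $j \le 2n/k$, one immediately gets $\langle u_{n,k}\eta,\eta\rangle \ge 1 - 2/k$, and likewise $\langle u_{n,k}^{\,3}\eta,\eta\rangle \ge 1 - 6/k$. Your two-window choice of $I$ and $J$ is designed to make the averaging \emph{exactly} lossless, which buys you the sharper intermediate bound $\sqrt{1-2/k}$ before you discard it back to $1-2/k$; the paper accepts the small loss from the single-vector estimate and lands on $1-2/k$ directly. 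Both are valid; the paper's is a one-line computation, while yours trades brevity for a tighter number you do not end up using. (A minor stylistic point: you use $p$ for both the polynomial and the row index, which is clear from context but worth renaming.)
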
 \begin{proof}   
 If $\eta$ is the unit vector $(\frac{1}{\sqrt{n}}, \cdots , \frac{1}{\sqrt{n}})$ in $\Cdb^n$, then it is easy to see that
$$\langle (L_n)^k \eta, \eta 
\rangle = 1 - \frac{k}{n} , \qquad 0 \leq k \leq n. $$
Since $u_{n,k}$ is an average of powers $(L_n)^j$ with
$0 \leq j \leq \frac{2n}{k}$, we have
$$\langle u_{n,k} \eta, \eta
\rangle \geq 1 - \frac{\frac{2n}{k}}{n} = 1 - \frac{2}{k} .$$
Similarly, $u_{n,k}^3$ is a weighted average of powers $(L_n)^j$ with
$0 \leq j \leq \frac{6n}{k}$.
 \end{proof}

We note that the diagonal matrix units $e^n_{i,i}$ are projections, and are 
also minimal idempotents in $A$ (that is, 
have the property that 
$e A e = \Cdb e$).

\begin{theorem} \label{nowk}  $A$ is not weakly compact, and is not separable.
\end{theorem}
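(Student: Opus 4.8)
The plan is to produce, from the averaging elements $u_r$, a continuum-sized uniformly separated subset of $A$ (for non-separability) and an isomorphic copy of $c_0$ sitting inside the range of a single multiplication operator (for the failure of weak compactness). Fix an integer $r > 6$ throughout. For non-separability I would invoke a standard almost disjoint family: choose uncountably many infinite subsets $\{S_\lambda\}$ of $\{n \in \Ndb : n \geq r\}$ with $S_\lambda \cap S_{\lambda'}$ finite for $\lambda \neq \lambda'$. For each $\lambda$ let $w_\lambda$ be the tuple equal to $u_{n,r}$ on blocks $n \in S_\lambda$ and $0$ elsewhere. The key point is that $w_\lambda \in A$: since the $u_{n,k}$ all lie in the commutative algebra generated by $L_n$, the estimate in the proof of Corollary \ref{isab} gives $p(u_k w_\lambda - w_\lambda) = \sup_{n \in S_\lambda} p_n(u_{n,k} u_{n,r} - u_{n,r}) \leq p(u_k u_r - u_r) \to 0$, and the right-handed condition follows by commutativity. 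Because $p_n(u_{n,r}) \geq \Vert u_{n,r} \Vert \geq 1 - \frac{2}{r}$ for $n \geq r$ by Lemma \ref{lem3}, and $S_\lambda \setminus S_{\lambda'}$ is infinite, we get $p(w_\lambda - w_{\lambda'}) \geq 1 - \frac{2}{r} > \frac{1}{2}$ for $\lambda \neq \lambda'$. This uncountable $\frac{1}{2}$-separated set shows $A$ is not separable.

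For non-weak-compactness it suffices to exhibit a single $a \in A$ for which $M_{a,a} : x \mapsto a x a$ is not weakly compact, and I would take $a = u_r$. Partition $\{n \geq r\}$ into countably many infinite sets $B_1, B_2, \dots$ and, exactly as above, let $w_j \in A$ be the tuple equal to $u_{n,r}$ on $B_j$ and $0$ elsewhere, so $p(w_j) \leq 1$. Then $M_{u_r,u_r}(w_j) = u_r w_j u_r = v_j$, where $v_j$ equals $u_{n,r}^3$ on $B_j$ and $0$ elsewhere. Here the cube estimate $\Vert u_{n,r}^3 \Vert \geq 1 - \frac{6}{r}$ of Lemma \ref{lem3} is precisely the needed input: together with $p(u_r^3) \leq 1$ it gives $c_j' := \sup_{n \in B_j} p_n(u_{n,r}^3) \in [1 - \frac{6}{r}, 1]$, and since the $v_j$ have pairwise disjoint block-supports one computes $p(\sum_j \alpha_j v_j) = \sup_j |\alpha_j|\, c_j'$ for scalars $(\alpha_j)$. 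As the weights $c_j'$ are bounded above and below, $Y := \overline{\mathrm{span}}\{v_j\}$ is isomorphic to $c_0$.

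To finish, I would consider the bounded sequence $W_m = w_1 + \cdots + w_m \in A$ and its images $V_m = M_{u_r,u_r}(W_m) = v_1 + \cdots + v_m$. Under $Y \cong c_0$ these are the partial sums of the (weighted) unit vector basis, i.e. the summing basis; if a subsequence $V_{m_i}$ converged weakly in $A$, its limit would lie in the weakly closed subspace $Y$ and, tested against the coordinate functionals of $Y \cong c_0$, would have to be $(c_j')_j \notin c_0$, a contradiction. Hence $\{V_m\}$ is not relatively weakly compact, so $M_{u_r,u_r}$ is not weakly compact and $A$ is not weakly compact. The main subtlety is that the failure must be routed through a genuine multiplication operator: the naive strategy of finding a bounded sequence whose weak* limit in $N$ escapes $A$ cannot work, because $a \xi a \in A$ for every $a \in A$ and $\xi \in N$ (indeed $p(u_k\, a \xi a - a \xi a) \leq p(u_k a - a)\, p(\xi)\, p(a) \to 0$, and similarly on the right). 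The resolution is to embed the $c_0$-copy into the range of one $M_{u_r,u_r}$, which is exactly why the two-sided cube bound of Lemma \ref{lem3}, rather than the single power estimate, is the crucial ingredient.
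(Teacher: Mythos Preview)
Your argument is correct. Both parts are sound: the almost-disjoint family gives an uncountable $\tfrac12$-separated set, and the $c_0$-copy inside $u_r A u_r$ together with the summing-basis trick kills weak compactness of $M_{u_r,u_r}$. The verification that each $w_\lambda$ (and $w_j$) lies in $A$ via the componentwise commutativity of the $u_{n,k}$ is fine, and the invocation of Lemma~\ref{lem3} for the lower bounds on $p_n(u_{n,r})$ and $p_n(u_{n,r}^3)$ is exactly what is needed.

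The paper's route is shorter and gets both conclusions in one stroke. It observes that $A$ is an $\ell^\infty$-bimodule under the diagonal action $(\alpha_n)\cdot(T_n)=(\alpha_n T_n)$, so that $\ell^\infty\cdot u_r\subset A$ and hence $\ell^\infty\cdot u_r^3=u_r(\ell^\infty\cdot u_r)u_r\subset u_rAu_r$. The same cube estimate from Lemma~\ref{lem3} then shows that $\vec\alpha\mapsto\vec\alpha\cdot u_r^3$ is a bicontinuous linear injection of $\ell^\infty$ into $u_rAu_r$; non-separability and non-weak-compactness of $M_{u_r,u_r}$ follow immediately (the image of the bounded set $\{\vec\alpha\cdot u_r:\|\vec\alpha\|_\infty\le1\}$ under $M_{u_r,u_r}$ is an isomorphic copy of $\mathrm{Ball}(\ell^\infty)$). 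Your construction is essentially a hand-built fragment of this: your $w_\lambda$ is $\chi_{S_\lambda}\cdot u_r$ and your $v_j$ is $\chi_{B_j}\cdot u_r^3$, so you are rediscovering pieces of the $\ell^\infty$-action rather than invoking it wholesale. The payoff of the paper's formulation is brevity and a stronger embedded object ($\ell^\infty$ rather than $c_0$); the payoff of yours is that it avoids any appeal to properties of $\ell^\infty$ beyond elementary sequence-space facts. Your closing remark about $a\xi a\in A$ for all $\xi\in N$ is a nice observation but somewhat tangential---it explains why a weak*-limit argument in $N$ is unavailable, not why the $c_0$-embedding is forced.
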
 \begin{proof}  Note that $A$ is an $\ell^\infty$-bimodule
via the action $$(\alpha_n) \cdot (T_n) =  (T_n)  \cdot (\alpha_n)
= (\alpha_n T_n) , \qquad (\alpha_n) \in \ell^\infty,
(T_n) \in A .$$
We will use this to embed $\ell^\infty$ isomorphically in $xAx$, where 
$x = u_{r}$ for large enough $r$.   Note that  
$$\ell^\infty \cdot x^3 = x (\ell^\infty \cdot x)x \subset xAx .$$
Choosing $r$ with
$1 - \frac{6n}{r} \geq \frac{1}{2}$, we have  that 
$\Vert u_{n,r}^3 \Vert \geq \frac{1}{2}$ for all $n \in \Ndb$
(recall $u_{n,r} = I$ if $n < r$).  Thus  for $\vec \alpha = (\alpha_n) \in \ell^\infty$ we have
 $$p(\vec \alpha \cdot x^3) \geq \Vert \vec \alpha \cdot x^3 \Vert =  \Vert \vec \alpha \cdot u_{r}^3 \Vert = \sup_n \, |\alpha_n| \Vert u_{n,r}^3 \Vert \geq \frac{1}{2} \sup_n \, |\alpha_n|,$$
and so the map $\vec \alpha  \mapsto \vec \alpha \cdot x^3$ is a bicontinuous injection of
$\ell^\infty$  into $xAx$.  Thus  $A$ is not weakly compact, nor separable. 
\end{proof}

\begin{lemma} \label{lem4}  If $T = (T_n) \in A$, then $\Vert d_n T_n d_n^{-1} \Vert \to 0$ as $n \to \infty$.
Thus the spectral radius $r(T_n) \to 0$  as $n \to \infty$.
\end{lemma}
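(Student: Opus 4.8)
The plan is to exploit the two defining features of $A$: membership in $N$ forces each block to have the form $(T_n, d_n T_n d_n^{-1})$, and the cai condition $p(u_k T - T) \to 0$ (with $p$ the maximum of the two block norms) gives quantitative control on the ``twisted'' component $d_n T_n d_n^{-1}$. The key observation is that $p_n(x) = \max\{\Vert x\Vert, \Vert d_n x d_n^{-1}\Vert\} \geq \Vert d_n x d_n^{-1}\Vert$, so the convergence $p(u_k T - T) \to 0$ controls precisely the twisted norms I care about. First I would fix $\epsilon > 0$ and choose $k$ so large that $p(u_k T - T) < \epsilon$, which in particular forces $\Vert d_n(u_{n,k} T_n - T_n)d_n^{-1}\Vert < \epsilon$ uniformly in $n$.

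The main idea is that the twisted conjugate of $u_{n,k}$ is tiny. Recall $d_n L_n d_n^{-1} = \frac{1}{c_n} L_n$, so $d_n (L_n)^i d_n^{-1} = c_n^{-i} (L_n)^i$, and since $u_{n,k}$ is an average of powers $(L_n)^i$ with $i \in E_{n,k} \subset [\frac{n}{k}, \frac{2n}{k}]$, every such $i$ satisfies $i \geq n/k$. Hence $d_n u_{n,k} d_n^{-1}$ is an average of operators each of norm at most $c_n^{-i} \leq c_n^{-n/k}$ (using $\Vert L_n\Vert \leq 1$ and $c_n \geq 1$ for large $n$, since $(c_n)$ is unbounded increasing). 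Therefore $\Vert d_n u_{n,k} d_n^{-1}\Vert \leq c_n^{-n/k} \to 0$ as $n \to \infty$ for each fixed $k$. The plan is then a triangle inequality: write
$$\Vert d_n T_n d_n^{-1}\Vert \leq \Vert d_n(T_n - u_{n,k} T_n)d_n^{-1}\Vert + \Vert d_n u_{n,k} d_n^{-1}\Vert \, \Vert d_n T_n d_n^{-1}\Vert,$$
where I have used the multiplicativity $d_n (u_{n,k} T_n) d_n^{-1} = (d_n u_{n,k} d_n^{-1})(d_n T_n d_n^{-1})$.

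From here the estimate closes cleanly. The first term on the right is $< \epsilon$ by the choice of $k$; the factor $\Vert d_n u_{n,k} d_n^{-1}\Vert$ in the second term tends to $0$, and $\Vert d_n T_n d_n^{-1}\Vert \leq p(T) < \infty$ is uniformly bounded since $T \in N$. So for $n$ large enough the second term is also $< \epsilon$, giving $\Vert d_n T_n d_n^{-1}\Vert < 2\epsilon$ for all sufficiently large $n$. Since $\epsilon$ was arbitrary, $\Vert d_n T_n d_n^{-1}\Vert \to 0$, which is the first assertion.

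For the spectral radius conclusion, I would observe that $T_n$ and $d_n T_n d_n^{-1}$ are similar matrices and hence have the same spectrum, so $r(T_n) = r(d_n T_n d_n^{-1}) \leq \Vert d_n T_n d_n^{-1}\Vert \to 0$. The main obstacle, and the only genuinely delicate point, is justifying that $\Vert d_n u_{n,k} d_n^{-1}\Vert \to 0$; this rests entirely on the fact that $u_{n,k}$ involves only powers $(L_n)^i$ with exponent $i$ growing linearly in $n$, together with the unboundedness of $c_n$, so that the damping factors $c_n^{-i}$ overwhelm everything. One must be slightly careful that $k$ is fixed before letting $n \to \infty$, so the order of quantifiers ($\epsilon$, then $k$, then $n$) is respected throughout.
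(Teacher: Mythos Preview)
Your proof is correct and follows essentially the same route as the paper's: fix $k$ from the cai condition, use $d_n L_n d_n^{-1} = c_n^{-1} L_n$ to bound $\Vert d_n u_{n,k} d_n^{-1}\Vert$, and then apply a triangle inequality together with the uniform bound $\Vert d_n T_n d_n^{-1}\Vert \leq p(T)$. The only cosmetic differences are that the paper uses right multiplication $T u_k$ rather than left (either works, by the symmetric definition of $A$), and the paper is content with the cruder estimate $\Vert d_n u_{n,k} d_n^{-1}\Vert \leq 1/c_n$ in place of your sharper $c_n^{-n/k}$.
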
 \begin{proof} 
Given $\epsilon > 0$ there exists an  $m \in \Ndb$ such that 
$$p_n(u_{n,m} T_n - T_n) + p_n(T _n u_{n,m}  - T_n)  < \frac{\epsilon}{2} p(u_m T - T) + p(T u_m  - T)  < \frac{\epsilon}{2} ,
n \in \Ndb. $$ We have noted that $d_n L_n d_n^{-1} = \frac{1}{c_n} L_n,$ and for $n \geq m$ the operator 
$u_{n,m}$ is an average of powers $L^j_n$, so for $n \geq m$ we have 
$$\Vert d_n u_{n,m} d_n^{-1} \Vert \leq \max_{j \in \Ndb} \, \Vert d_n L_n^j  d_n^{-1} \Vert \leq  \frac{1}{c_n} .$$
Thus $$\Vert d_n T_n u_{n,m}  d_n^{-1} \Vert \leq \frac{1}{c_n}  \Vert d_n T_n d_n^{-1} \Vert \leq \frac{1}{c_n}  p(T) .$$
Consequently,  for $n \geq m$ the  quantity $\Vert d_n T_n d_n^{-1} \Vert$ is dominated by 
$$\Vert d_n (T_n u_{n,m} - T_n) d_n^{-1} \Vert + \Vert d_n T_n u_{n,m} d_n^{-1} \Vert \leq 
p_n(T_n u_{n,m} - T_n)  + \frac{1}{c_n}  p(T) \leq \frac{\epsilon}{2}  + \frac{1}{c_n}  p(T).$$
The result is clear from this.  
\end{proof}   

For a matrix $B$ write $\overline{\Delta}_U B$ for the upper triangular projection of $B$ (that is, we change $b_{ij}$ to $0$ if
$i > j$).  Similarly, write $\Delta_L B$ for the strictly lower triangular part of $B$.
In the next results, as usual ${r \choose s} = 0$ if $0 \leq r < s$ are integers.  

\begin{lemma} \label{lem5}  If $0 \neq T = (T_n) \in A$, and $\epsilon > 0$ is given, there exist 
$k, m \in \Ndb$ such that for all $r \in \Ndb_0$ and $n \geq \max \{ k, m \}$, we have 
$$\Vert (\overline{\Delta}_U T_n)^r \Vert \leq \sum_{s=0}^{k-1} \, {r \choose s} \, (2 p(T))^r \, \epsilon^{r-s} .$$
\end{lemma}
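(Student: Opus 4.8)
The plan is to decompose $\overline{\Delta}_U T_n$ into a near-diagonal piece of small norm and a far piece which, although only bounded in norm, is nilpotent of order $k$ once $n \ge k$. Fix $\epsilon$. For $n \geq k$ I would write $\overline{\Delta}_U T_n = V_n + W_n$, where $V_n$ collects the entries on the superdiagonals $0 \le j-i < \lceil n/k\rceil$ and $W_n$ collects those with $j - i \ge \lceil n/k \rceil$. Each application of $W_n$ raises the superdiagonal index by at least $\lceil n/k\rceil$, while $V_n$, being upper triangular, never lowers it; hence any product of $r$ factors drawn from $\{V_n, W_n\}$ that contains at least $k$ copies of $W_n$ is supported on superdiagonals $\ge k\lceil n/k\rceil \ge n$ and so vanishes in $M_n$. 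This is the mechanism that truncates the binomial expansion at $s = k-1$.

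For the norm bounds I would use two inputs. First, Lemma \ref{lem4} gives $\Vert d_n T_n d_n^{-1}\Vert \to 0$; since conjugation by $d_n$ scales the $(i,j)$ entry by $c_n^{i-j}$ and fixes the main diagonal, this forces the diagonal of $T_n$ (and the entire strictly lower part) to become arbitrarily small, so one may fix $m$ making the diagonal contribution small for $n \ge m$. Second, and more importantly, the defining property of $A$, namely that $(u_k)$ is a cai (Corollary \ref{isab}), says $p_n(T_n u_{n,k} - T_n) \to 0$ uniformly in $n$ once $k$ is large; since $u_{n,k}$ is an average of the shifts $(L_n)^i$ with $i \in E_{n,k} \subset [\frac{n}{k}, \frac{2n}{k}]$, the product $T_n u_{n,k}$ is concentrated on the far superdiagonals. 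Thus $T_n$ agrees, up to an error controlled by the cai, with its far part, which lets me arrange $\Vert V_n\Vert \le 2p(T)\epsilon$ and $\Vert W_n\Vert \le 2p(T)$ after choosing $k$ and then $m$ appropriately; the factor $2$ absorbs the cai approximation error together with $\Vert u_{n,k}\Vert \le 1$ and $\Vert T_n\Vert \le p(T)$.

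With such a decomposition in hand the result is immediate: expanding $(\overline{\Delta}_U T_n)^r = (V_n + W_n)^r$ and discarding, by the first paragraph, every word containing at least $k$ factors of $W_n$, the surviving words with exactly $s < k$ factors of $W_n$ number ${r \choose s}$, and each has norm at most $\Vert W_n\Vert^s \Vert V_n\Vert^{r-s} \le (2p(T))^s (2p(T)\epsilon)^{r-s} = (2p(T))^r \epsilon^{r-s}$, giving exactly $\sum_{s=0}^{k-1}{r \choose s}(2p(T))^r\epsilon^{r-s}$. The main obstacle is the second paragraph: the strictly upper-triangular truncation is not uniformly bounded in operator norm (it grows like $\log n$), so one cannot simply split off the far part with a clean dimension-free bound. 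Overcoming this is precisely where the special norm $p_n$ and the cai relation $T_n u_{n,k} \approx T_n$ must be exploited, using the shift structure of $u_{n,k}$ to show that the near-diagonal part of $T_n$ is genuinely small rather than merely dominated by $p(T)$, and that the far part $W_n$ inherits a norm bound independent of $n$.
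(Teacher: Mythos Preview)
Your overall architecture is exactly right, and you have correctly isolated the difficulty: split $\overline{\Delta}_U T_n$ as a ``far--superdiagonal'' piece that is $k$-step nilpotent plus a ``small'' piece, expand $(\,\cdot\,)^r$ binomially, and discard all words containing $k$ or more far factors. The paper does precisely this. The gap is that your particular choice of splitting, the \emph{hard} truncation $V_n+W_n$ by superdiagonal index, cannot be bounded: as you yourself note, the band projection has operator norm growing with $n$, and nothing in the hypotheses gives you entrywise control of the upper part (the weight $d_n$ only tames the strictly lower entries). You flag this but do not resolve it, and the resolution you sketch---``the near-diagonal part is genuinely small''---is exactly what one cannot extract from $T_n\approx T_n u_{n,k}$ without passing through an unbounded truncation.

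The paper's fix is to replace the hard truncation by a \emph{soft} one. Set
\[
S_1=u_{n,k}\,\overline{\Delta}_U T_n,\qquad S_2=\overline{\Delta}_U T_n-S_1.
\]
Since $u_{n,k}$ is a convex combination of $L_n^i$ with $i\ge \lceil n/k\rceil$ and $\overline{\Delta}_U T_n$ is upper triangular, $S_1$ is supported on superdiagonals $\ge \lceil n/k\rceil$, so your nilpotency argument applies verbatim with $S_1$ in place of $W_n$. The point is that now the norm bounds are automatic: $\Vert S_1\Vert\le\Vert u_{n,k}\Vert\,\Vert\overline{\Delta}_U T_n\Vert\le\Vert\overline{\Delta}_U T_n\Vert$, and $\Vert\overline{\Delta}_U T_n\Vert\le\Vert T_n\Vert+\Vert\Delta_L T_n\Vert\le p(T)\frac{c_n}{c_n-1}$ using the entrywise decay of the lower part forced by $p_n$. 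For $S_2$ one uses the \emph{left} cai relation $p(u_kT-T)<\epsilon\,p(T)$ together with the same lower-triangular estimate to get $\Vert S_2\Vert\le p(T)\bigl(\epsilon+\frac{2}{c_n-1}\bigr)$. Choosing $m$ so that $c_n$ is large for $n\ge m$ converts these into the clean constants $2p(T)$ and $2p(T)\epsilon$, and your final binomial count finishes the proof. In short: keep your strategy, but let $u_{n,k}$ do the truncating for you rather than projecting by hand.
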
 \begin{proof} 
The $i$-$j$ entry $T_{n,i,j}$ of $T_n$ equals
$\langle T_n e_j , e_i \rangle = c_n^{j-i} \, \langle  d_n T_n d_n^{-1} e_j , e_i \rangle$, and so 
$$|T_{n,i,j} | = c_n^{j-i} \, |\langle  d_n T_n d_n^{-1} e_j , e_i \rangle|  \leq c_n^{j-i} \, p_n(T_n) , \qquad  T = (T_n) \in A.$$  
It follows from this that
$$\Vert \sum_{j = 1}^{n-r} \, T_{n,j+r,j} \, E_{j+r,j} \Vert = \max_{j \leq n-r} \, |T_{n,j+r,j} | \leq c_n^{-r} \, p_n(T_n) ,$$ if $r < n$.
Since $\sum_{r =1}^{n-1} \, (\sum_{j = 1}^{n-r} \, T_{n,j+r,j} \, E_{j+r,j}) = \Delta_L T_n$, we deduce that
\begin{equation} \label{lse} \Vert \Delta_L T_n \Vert  = \Vert T_n -  \overline{\Delta}_U T_n 
\Vert \leq \sum_{r =1}^{n-1} \, c_n^{-r} \, p_n(T_n) \leq \frac{p_n(T_n)}{c_n - 1} \leq
\frac{p(T)}{c_n - 1}. \end{equation}  

Given $\epsilon > 0$ choose $k$ with $p(u_k T - T) < \epsilon p(T),$  and let $n \geq k$.
 Then $$\Vert u_{n,k} T_n - T_n \Vert \leq p_n(u_{n,k} T_n - T_n) < \epsilon p(T),$$ and so
$$\Vert u_{n,k} \overline{\Delta}_U T_n - \overline{\Delta}_U T_n \Vert  \leq \epsilon p(T) + \Vert (u_{n,k} - I) (T_n -  \overline{\Delta}_U T_n) \Vert \leq p(T) (\epsilon + \frac{2}{c_n - 1}),$$
since $$u_{n,k} \overline{\Delta}_U T_n - \overline{\Delta}_U T_n = (I - u_{n,k}) (T_n -  \overline{\Delta}_U T_n)  + (u_{n,k} T_n - T_n) .$$
Let $S_1 = u_{n,k} \overline{\Delta}_U T_n$ and $S_2 = \overline{\Delta}_U T_n - S_1$, then 
$\Vert S_2 \Vert \leq p(T) (\epsilon + \frac{2}{c_n - 1}),$ by the last displayed equation.   Also,
$$\Vert S_1 \Vert \leq \Vert \overline{\Delta}_U T_n \Vert \leq p(T) + \Vert (I - \overline{\Delta}_U) T_n \Vert \leq p(T) + \frac{p(T)}{c_n - 1}
= p(T) \frac{c_n}{c_n - 1}$$
by (\ref{lse}).  

Now $\overline{\Delta}_U T_n = S_1 + S_2$,
so $(\overline{\Delta}_U T_n)^r$ is a sum from $s = 0$ to $r$, of ${r \choose s}$ times terms which are a product of $r$ factors, 
$s$ of which are $S_1$ and $r-s$ of which are $S_2$.    Note that any product of upper triangular $n \times n$ matrices that has $k$ or more 
factors which equal $S_1$, is zero.  This is because multiplication of an upper triangular matrix $U$ by $u_{n,k}$ (and hence by $S_1$) 
decreases the 
number of nonzero `superdiagonals' of $B$ by a number   
$\geq \frac{n}{k}$, so after $k$ such multiplications we are left with the zero matrix.   Thus we can assume that $s < k$ above.
Using the estimates at the end of the last paragraph, we deduce that 
$$\Vert (S_1 + S_2)^r \Vert \leq \sum_{s=0}^{k-1} \, {r \choose s} \Vert S_1 \Vert^s \Vert S_2 \Vert^{r-s} \leq
\sum_{s=0}^{k-1} \, {r \choose s} \, (p(T) \frac{c_n}{c_n - 1})^s \, (p(T) (\epsilon + \frac{2}{c_n - 1}))^{r-s}. $$
Since $c_n \to \infty$ we may choose $m$ such that $\frac{c_n}{c_n - 1} < 2$ and $\epsilon + \frac{2}{c_n - 1} < 2 \epsilon$ for 
all $n \geq m$.  Thus for $n \geq \max \{ k, m \}$, we have
$$\Vert (\overline{\Delta}_U T_n)^r  \Vert = \Vert (S_1 + S_2)^r \Vert \leq \sum_{s=0}^{k-1} \, {r \choose s} \,  (2 p(T))^r \epsilon^{r-s}$$
as desired.  
\end{proof}   

For $k \in \Ndb$ and
positive numbers $b, \epsilon$,
define a quantity  $K(k,b,\epsilon) = \frac{1}{2 b 
(1- \epsilon) \,  \epsilon^k}$.  

\begin{lemma} \label{lem6}  If $0 \neq T = (T_n) \in A$, and $\epsilon > 0$ is given, there exist 
$k, m \in \Ndb$ such that for all $\lambda \in \Cdb$ with $|\lambda| > 4 p(T) \epsilon$, 
and $n \geq \max \{ k, m \}$, we have $\lambda I - \overline{\Delta}_U T_n$ and $\lambda I - T_n$ invertible in $M_n$, and both 
$$\Vert (\lambda I - \overline{\Delta}_U T_n)^{-1} \Vert \leq K(k,
p(T), \epsilon)$$ and $$\Vert  (\lambda I - T_n)^{-1} \Vert \leq 2 K(k,
p(T), \epsilon).$$
\end{lemma}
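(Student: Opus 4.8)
The plan is to handle the upper-triangular piece $\overline{\Delta}_U T_n$ first by a Neumann series, and then recover the full matrix $T_n$ by treating the strictly lower-triangular part $\Delta_L T_n$ as a small perturbation. Throughout write $b = p(T)$, and note we may assume $0 < \epsilon < 1$ (otherwise $K(k,b,\epsilon)$ is not a meaningful positive bound, and one only applies the lemma for small $\epsilon$). First I would take the integers $k, m$ furnished by Lemma \ref{lem5} for this $\epsilon$, so that for all $r \in \Ndb_0$ and $n \geq \max\{k,m\}$ one has $\Vert (\overline{\Delta}_U T_n)^r \Vert \leq \sum_{s=0}^{k-1} {r \choose s} (2b)^r \epsilon^{r-s}$. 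For $|\lambda| > 4 b \epsilon$ the scalar $y = 2b\epsilon/|\lambda|$ satisfies $y < 1/2$, so the Neumann series $\sum_{r \geq 0} \lambda^{-r-1} (\overline{\Delta}_U T_n)^r$ converges absolutely; this simultaneously shows $\lambda I - \overline{\Delta}_U T_n$ is invertible (with inverse equal to that series, by the usual telescoping identity) and supplies a norm bound to be estimated.

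To estimate the bound, I would substitute the Lemma \ref{lem5} inequality into the series, interchange the finite sum over $s$ with the sum over $r$, and evaluate using the standard identity $\sum_{r \geq s} {r \choose s} y^r = y^s/(1-y)^{s+1}$, valid for $|y| < 1$. Writing $x = 2b/|\lambda|$ (so that $y = x\epsilon$ and $x < 1/(2\epsilon)$), the double sum collapses to
\[
\Vert (\lambda I - \overline{\Delta}_U T_n)^{-1} \Vert \leq \frac{1}{2b} \sum_{j=1}^{k} \Big( \frac{x}{1 - x\epsilon} \Big)^{j}.
\]
Since $g(x) = x/(1-x\epsilon)$ is increasing on $[0,1/\epsilon)$ and $x < 1/(2\epsilon)$, we have $g(x) < g(1/(2\epsilon)) = 1/\epsilon$, so the right side is at most $\frac{1}{2b}\sum_{j=1}^k \epsilon^{-j} = \frac{\epsilon^{-k}-1}{2b(1-\epsilon)} < \frac{\epsilon^{-k}}{2b(1-\epsilon)} = K(k,b,\epsilon)$. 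This gives both the first invertibility claim and the first norm estimate, uniformly in $|\lambda| > 4b\epsilon$; the key point is that the bound is monotone in $x$, so the worst case occurs as $|\lambda| \to 4b\epsilon^+$, which is exactly where the constant $K$ has been calibrated.

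For the full matrix I would factor $\lambda I - T_n = (\lambda I - \overline{\Delta}_U T_n)\big( I - (\lambda I - \overline{\Delta}_U T_n)^{-1} \Delta_L T_n \big)$, using $T_n = \overline{\Delta}_U T_n + \Delta_L T_n$. By the estimate (\ref{lse}) we have $\Vert \Delta_L T_n \Vert \leq b/(c_n - 1) \to 0$ as $n \to \infty$, so I would enlarge $m$ (possible since $c_n \to \infty$) so that $K(k,b,\epsilon)\, b/(c_n-1) \leq 1/2$ for all $n \geq m$. Then $\Vert (\lambda I - \overline{\Delta}_U T_n)^{-1} \Delta_L T_n \Vert \leq 1/2$, so the second factor is invertible with inverse of norm $\leq 2$ by the Neumann lemma, whence $\lambda I - T_n$ is invertible and $\Vert (\lambda I - T_n)^{-1} \Vert \leq 2 K(k,b,\epsilon)$.

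The main obstacle I anticipate is purely the bookkeeping in the second paragraph: getting the interchanged double sum to collapse via the negative-binomial generating function and confirming that the resulting constant is exactly $K(k,b,\epsilon)$ rather than something larger, together with the monotonicity check that makes the estimate uniform over all admissible $\lambda$. Everything else — invertibility via the Neumann series, and the lower-triangular perturbation controlled by (\ref{lse}) — should be routine once $k$ and $m$ are fixed as above.
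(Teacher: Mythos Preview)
Your proposal is correct and follows essentially the same route as the paper: invoke Lemma~\ref{lem5}, sum the Neumann series termwise using the negative-binomial identity to obtain the bound $K(k,p(T),\epsilon)$ on $\Vert(\lambda I-\overline{\Delta}_U T_n)^{-1}\Vert$, then enlarge $m$ so that the strictly lower-triangular part (controlled by (\ref{lse})) is a perturbation of size at most $1/(2K)$ and apply the Neumann lemma. Your multiplicative factorization and monotonicity-in-$x$ argument are cosmetically different from the paper's additive $R+H$ formulation and direct substitution of $|\lambda|>4p(T)\epsilon$, but the underlying estimates and constants coincide exactly.
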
 \begin{proof} If  $|\lambda| > 2 p(T) \epsilon$ then  $$\sum_{r=0}^{\infty} \, \Vert \lambda^{-r-1} \,  (\overline{\Delta}_U T_n)^r
\Vert \leq |\lambda|^{-1} \,  \sum_{r=0}^{\infty} \,  \sum_{s=0}^{k-1} \, {r \choose s} \, (\frac{2 p(T)}{|\lambda|})^r \, \epsilon^{r-s} ,$$
by Lemma \ref{lem5}, for $n \geq \max \{ k, m \}$, where $k, m$ are as in that lemma.   However the latter quantity equals
$$|\lambda|^{-1} \,  \sum_{s=0}^{k-1} \, \sum_{r=0}^{\infty} \,   {r \choose s} \, (\frac{2 p(T) \epsilon}{|\lambda|})^{r-s} \, 
(\frac{2 p(T)}{|\lambda|})^{s} = |\lambda|^{-1} \,  \sum_{s=0}^{k-1} \, 
(\frac{2 p(T)}{|\lambda|})^{s}  \, (1 - \frac{2 p(T) \epsilon}{|\lambda|})^{-s-1}$$
using the binomial formula.   This is finite, so $\sum_{r=0}^{\infty} \,  \lambda^{-r-1} \,  (\overline{\Delta}_U T_n)^r$ converges, 
and this  is clearly an inverse for  $\lambda I - \overline{\Delta}_U T_n$.  
If $|\lambda| > 4 p(T) \epsilon$, then the sum in the 
last displayed equation is dominated by
$$\frac{1}{4 p(T) \epsilon}  \,  \sum_{s=0}^{k-1} \,
(\frac{1}{2  \epsilon})^s \, 2^{s+1} = 
\frac{1}{2 p(T) (1 -  \epsilon)} \frac{1 - \epsilon^k}{\epsilon^k}
\leq K(k,p(T),\epsilon) .$$
 We also obtain 
\begin{equation} \label{deleq} \Vert (\lambda I - \overline{\Delta}_U T_n)^{-1} \Vert
\leq K(k,p(T), \epsilon). \end{equation} 

By increasing $m$ if necessary, we can assume that $c_n - 1 > 2 \, p(T) \, K(k,
p(T), \epsilon)$.   Then by (\ref{lse}) we have 
$$\Vert T_n - \overline{\Delta}_U T_n \Vert \leq \frac{p(T)}{c_n - 1} < \frac{1}{2 K(k,
p(T), \epsilon)}.$$ 
A simple consequence of the Neumann lemma is that if $R$ is invertible and $\Vert H \Vert < \frac{1}{2  \Vert R^{-1} \Vert}$, 
then $R + H$ is invertible and $\Vert (R + H)^{-1} \Vert \leq 2 \Vert R^{-1} \Vert$.  Setting $R = \lambda I - \overline{\Delta}_U T_n$ and
$H =  \overline{\Delta}_U T_n - T_n$, we have $$\Vert H \Vert < \frac{1}{2 K(k,
p(T), \epsilon)} < \frac{1}{2  \Vert R^{-1} \Vert}$$  by (\ref{deleq}).   Hence $R + H = \lambda I - T_n$ is
invertible, and by (\ref{deleq}) again the norm of its inverse is dominated by $2 \Vert R^{-1} \Vert \leq 2
K(k, p(T), \epsilon).$
 \end{proof}   

The quantity $K(k,p(T), \epsilon)$ above is independent of $n$, which gives:

\begin{corollary} \label{isnotma}     The spectrum of every element of $A$  is finite 
or a null sequence and zero.   Hence $A$ is a modular annihilator algebra.
\end{corollary}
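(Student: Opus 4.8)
The plan is to identify the spectrum ${\rm Sp}_{A^1}(T)$ of a nonzero $T=(T_n)\in A$ precisely, showing that its part away from $0$ is exactly $\bigcup_n {\rm Sp}(T_n)\setminus\{0\}$, and then to read off the conclusion from the decay of spectral radii in Lemma \ref{lem4}. Since $N$ is unital with identity $I=(I_n)$ and $A^1=A+\Cdb I\subseteq N$, one inclusion is immediate: if $\lambda\in{\rm Sp}(T_n)$ for some $n$, then a putative inverse of $\lambda I-T$ in $A^1$ would invert $\lambda I_n-T_n$ coordinatewise, so $\lambda\in{\rm Sp}_{A^1}(T)$.

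For the reverse inclusion, fix $\lambda\neq 0$ with $\lambda\notin\bigcup_n{\rm Sp}(T_n)$ and put $R_n=(\lambda I-T_n)^{-1}$, $R=(R_n)$. The first key step is to show $R\in N$, i.e.\ $\sup_n p_n(R_n)<\infty$. The norms $\Vert R_n\Vert$ are controlled uniformly for large $n$ by Lemma \ref{lem6}: choosing $\epsilon>0$ with $|\lambda|>4p(T)\epsilon$, that lemma produces an $N$ and a bound $2K(k,p(T),\epsilon)$ on $\Vert R_n\Vert$ which, crucially, does not depend on $n$; the finitely many remaining blocks are harmless. For the other half of $p_n(R_n)$ one writes $d_nR_nd_n^{-1}=(\lambda I-d_nT_nd_n^{-1})^{-1}$ and uses $\Vert d_nT_nd_n^{-1}\Vert\to 0$ from Lemma \ref{lem4}, so that a Neumann-series estimate bounds $\Vert d_nR_nd_n^{-1}\Vert$ for large $n$. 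Hence $R\in N$ and $R=(\lambda I-T)^{-1}$ there.

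The main obstacle is to promote this to invertibility in $A^1$, that is, to show $b:=R-\lambda^{-1}I\in A$, and this is exactly where the defining cai condition of $A$ enters. The device is to use both resolvent identities $b=\lambda^{-1}RT=\lambda^{-1}TR$. Since $T\in A$ we have $p(u_kT-T)+p(Tu_k-T)\to 0$, and because $p(R)<\infty$ and $p$ is submultiplicative, the two quantities
$$p(bu_k-b)=|\lambda|^{-1}\,p\big(R(Tu_k-T)\big),\qquad p(u_kb-b)=|\lambda|^{-1}\,p\big((u_kT-T)R\big)$$
tend to $0$. Thus $b\in A$, so $R\in A^1$ and $\lambda\notin{\rm Sp}_{A^1}(T)$. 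Combining the two inclusions gives ${\rm Sp}_{A^1}(T)\setminus\{0\}=\bigcup_n{\rm Sp}(T_n)\setminus\{0\}$.

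Finally I would invoke Lemma \ref{lem4} once more: as $r(T_n)\to 0$, for any $\delta>0$ only finitely many of the finite sets ${\rm Sp}(T_n)$ meet $\{|z|\geq\delta\}$, so $\bigcup_n{\rm Sp}(T_n)\setminus\{0\}$ is finite or can be enumerated as a sequence tending to $0$. Note that $A$ is nonunital, so $0\in{\rm Sp}_{A^1}(T)$ automatically; therefore the spectrum of $T$ is finite or a null sequence together with $0$, and in particular it has no nonzero limit point. As $A$ is semisimple by Corollary \ref{isab}, the characterization of modular annihilator algebras recalled in the introduction (\cite[Theorem 8.6.4]{Pal}) then shows that $A$ is a modular annihilator algebra.
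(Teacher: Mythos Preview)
Your argument is correct and in fact sharper than the paper's. Both proofs establish that for $|\lambda|>4p(T)\epsilon$ and $\lambda\notin\bigcup_{n\le m_0}{\rm Sp}(T_n)$ the element $(\lambda I-T)^{-1}$ exists in $N$, using Lemma~\ref{lem6} for the $M_n$-norms and Lemma~\ref{lem4} plus a Neumann series for the conjugated norms. At this point the paper stops and invokes the general fact that passing to a subalgebra can only add interior (the boundary of ${\rm Sp}_{A^1}(T)$ is contained in ${\rm Sp}_N(T)$), so a countable spectrum with a single limit point in $N$ forces the same shape in $A^1$. You instead push the inverse all the way into $A^1$ by the resolvent identity $R-\lambda^{-1}I=\lambda^{-1}RT=\lambda^{-1}TR$ together with submultiplicativity of $p$ and the very definition of $A$; this is a nice use of the cai condition that the paper does not exploit here, and it yields the exact identification ${\rm Sp}_{A^1}(T)\setminus\{0\}=\bigcup_n{\rm Sp}(T_n)\setminus\{0\}$ rather than just a containment up to filling in holes. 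The only small remark is that the nonunitality of $A$ (hence $0\in{\rm Sp}_{A^1}(T)$) has not been stated explicitly prior to this point; it follows immediately from Lemma~\ref{lem4} (an identity would have $\Vert d_n I_n d_n^{-1}\Vert=1\not\to 0$), and in any case your conclusion about the shape of the spectrum does not actually depend on it.
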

\begin{proof}   Let $0 \neq T = (T_n) \in A$.  We will show that the spectrum of $T$ is finite 
or a null sequence and zero.   It is sufficient to show that if $\epsilon > 0$ is given, there exists $m_0 \in \Ndb$
such that if $|\lambda| > 4 p(T) \epsilon$, and if $\lambda$ is not in the spectrum of $T_1, \cdots , T_{m_0}$,
then $\lambda \notin {\rm Sp}_A(T)$.   So assume these conditions, and let $m_0 = \max \{ k, m \}$ as in 
Lemma \ref{lem6}.  For $n \geq m_0$ we have by Lemma \ref{lem6} that $\lambda I - T_n$ is invertible, and the usual matrix norm of its inverse
is bounded independently of $n$.   
 By assumption this is also true for $n < m_0$.   By Lemma \ref{lem4} there is a $q$ such that
$\Vert d_n T_n d_n^{-1} \Vert < \epsilon$ for $n \geq q$.  If $|\lambda| > \epsilon$ then $(\lambda I - T_n)^{-1} = 
\sum_{r=0}^{\infty} \,  \lambda^{-r-1} \,  T_n^r$ and 
$$\Vert d_n (\lambda I - T_n)^{-1} d_n^{-1} \Vert = 
\Vert \sum_{r=0}^{\infty} \,  \lambda^{-r-1} \,  d_n T_n^r d_n^{-1} \Vert \leq \sum_{r=0}^{\infty} \, |\lambda|^{-r-1} \, \epsilon^r
= |\lambda|^{-1} \, (1 - \frac{\epsilon}{|\lambda|})^{-1} .$$
Thus $(p_n((\lambda I - T_n)^{-1}))$ is bounded independently of $n$.   Hence  $((\lambda I - T_n)^{-1}) \in N$,
and this is an inverse in $N$ for $\lambda I - T$.  Thus the spectrum of $T$ in $N$ is finite 
or a null sequence and zero.    The  spectrum in $A$ might be bigger, but since the boundary of its spectrum
cannot increase, Sp$_A(T)$ is also finite 
or a null sequence and zero.

The last statements follow from \cite[Chapter 8]{Pal}.  \end{proof}

We point out some more  features of our example $A$, in hope that these may further its future use as a counterexample in
the subject:

\begin{proposition} \label{multis}  The multiplier algebra of $A$ may be taken to be
$\{ x \in N : x A + Ax \subset A \}$.  This is also valid with $N$ replaced by $M$. 
\end{proposition}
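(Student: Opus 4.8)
The plan is to realise $A$ concretely and identify its multiplier algebra with an idealizer of Hilbert space operators, and then to show that this idealizer is forced to sit inside $N$. First I would use the isometric identification $N \cong \oplus_n^\infty (M_n, p_n(\cdot))$ to represent $N$, and hence its subalgebra $A$, completely isometrically on $H = \oplus_n (\Cdb^n \oplus \Cdb^n)$, where $(x_n) \in N$ acts in the $n$th summand $H_n := \Cdb^n \oplus \Cdb^n$ as $\xi \oplus \eta \mapsto (x_n \xi) \oplus (d_n x_n d_n^{-1} \eta)$. This representation is nondegenerate, since $A \supset A_0$ contains the projection $f_n$ onto $H_n$ (namely the identity of the copy $M_n'$ of $M_n$ in $A_0$), and $\cup_n f_n H = \cup_n H_n$ is total in $H$. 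By the fact recalled in the introduction, $M(A)$ is then completely isometrically the idealizer $\{ x \in B(H) : xA + Ax \subset A \}$ of $A$ in $B(H)$. Since for the inclusions $N \subset M \subset B(H)$ the idealizers of $A$ in $N$ and in $M$ are precisely the intersections of the idealizer in $B(H)$ with $N$ and with $M$, both assertions of the proposition reduce to the single claim that the idealizer of $A$ in $B(H)$ already lies in $N$.

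To prove that claim I would take $x \in B(H)$ with $xA + Ax \subset A$ and first test against the projections $f_n \in A$. As $x f_n \in A \subset N$ and every element of $N$ maps each summand $H_m$ into itself, the operator $x f_n$ commutes with each $f_m$; since $x f_n = (x f_n) f_n$ this gives $f_m x f_n = 0$ for $m \neq n$, so $x = \oplus_n x^{(n)}$ is block diagonal with $x^{(n)} = f_n x f_n \in B(H_n)$. Writing $x^{(n)} = \left[ \begin{array}{cc} a & b \\ c & d \end{array}\right]$ in $2 \times 2$ block form relative to $H_n = \Cdb^n \oplus \Cdb^n$ (with $a,b,c,d \in M_n$), I would then test against the whole copy $M_n' \subset A$: for $y \in M_n$ the corresponding element acts on $H_n$ as $\mathrm{diag}(y, d_n y d_n^{-1})$, so
$$x^{(n)} \left[ \begin{array}{cc} y & 0 \\ 0 & d_n y d_n^{-1} \end{array}\right] = \left[ \begin{array}{cc} ay & b\, d_n y d_n^{-1} \\ cy & d\, d_n y d_n^{-1} \end{array}\right]$$
must lie in the $n$th summand of $N$, hence equal $\mathrm{diag}(z, d_n z d_n^{-1})$ for some $z \in M_n$. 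Letting $y$ run over $M_n$ and using that $d_n$ is invertible, the off-diagonal entries force $b = 0$ and $c = 0$, while the diagonal entries give $z = ay$ and $d\, d_n y = d_n a y$ for all $y$, so $d = d_n a d_n^{-1}$. Thus each $x^{(n)}$ has exactly the form prescribed by $N$, whence $x = (a_n) \in N$ with $p(x) = \Vert x \Vert$.

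This would finish the argument: the idealizer of $A$ in $B(H)$ is contained in $N$, so it equals its intersections with $N$ and with $M$, and all three idealizers coincide with $M(A)$. The one genuinely delicate point, and the step I expect to require the most care, is that $A$ supplies only the coarse projections $f_n$ and not the finer projections onto the two copies of $\Cdb^n$ inside each $H_n$ (these do not lie in $N$); consequently the intra-block structure cannot be read off from idempotents in $A$ and must instead be extracted by running $y$ through the entire matrix algebra $M_n'$, as above. Verifying that the chosen representation is both completely isometric and nondegenerate (so that the idealizer description of $M(A)$ from the introduction applies) is routine but should be recorded explicitly; only the right-module condition $xA \subset A$ is actually needed for the location argument.
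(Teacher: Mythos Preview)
Your argument is correct, but it takes a different route from the paper's. The paper also represents $A$ nondegenerately on $H=\oplus_n(\Cdb^n\oplus\Cdb^n)$ and identifies $M(A)$ with the idealizer of $A$ in $B(H)$, but then dispatches the key inclusion in one stroke: since $N$ is weak*-closed in $B(H)$, the weak*-continuous extension $\tilde\pi:A^{**}\to B(H)$ of the inclusion automatically lands in $N$, and the copy of $M(A)$ in $B(H)$ is exactly the image under $\tilde\pi$ of the copy of $M(A)$ sitting in $A^{**}$. This is shorter and uses only the abstract fact that $N$ is a von Neumann algebra; no block-by-block analysis is needed.

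Your approach, by contrast, is entirely elementary: you locate an idealizer element inside $N$ by hand, first using the projections $f_n\in A_0$ to force block-diagonality, and then running $y$ through the full copy $M_n'\subset A_0$ to pin down the internal $2\times2$ structure of each block. This avoids any appeal to weak* topology or to $A^{**}$ beyond the idealizer description of $M(A)$, at the cost of a short explicit computation. Both arguments need (and you correctly supply) the observation that $A\supset A_0$ contains enough elements to make the representation nondegenerate; the paper phrases this as ``$D_0$ acts nondegenerately'', which amounts to the same thing. Your remark that only the condition $xA\subset A$ is actually used is a nice bonus not made explicit in the paper.
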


\begin{proof}
Viewing $M = \oplus_n^\infty \, (M_n  \oplus
M_n)$ as represented on $H =\oplus^2_n (\Cdb^n \oplus \Cdb^n)$, it is clear that $D_0$, and hence also $A$, 
acts nondegenerately on $H$.  So the multiplier algebra $M(A)$ may be viewed as a subalgebra
of $B(H)$.
We also see that the weak* continuous extension $\tilde{\pi} : A^{**} \to N$ of the `identity map' on $A$,
is a completely isometric homomorphism from the copy of $M(A)$ in $A^{**}$  onto   the copy of $M(A)$ in $B(H)$,
and in particular, the latter is contained in $N$.    So the latter is
$M(A) = \{ x \in N : x A + Ax \subset A \}$.    A similar argument works with $M$ replaced by $N$.
\end{proof}

We note that if $D_n$ is the commutative diagonal $C^*$-algebra
in $M_n$, then there is a natural isometric copy $D$  of $\oplus_n^\infty \, D_n$ inside $N$, namely 
the tuples $((x_n , x_n))$ for a bounded sequence  $x_n \in D_n$.   

We assume henceforth that $c_n > 1$ for all $n$.

\begin{proposition} \label{diaga}  The diagonal $\Delta(A) = A \cap A^*$ of $A$ equals
 the natural copy $D_0$ of   the 
$c_0$-sum $C^*$-algebra $\oplus_n^\circ \, D_n$ inside $A$.  
\end{proposition}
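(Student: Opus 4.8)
The plan is to prove the two inclusions $D_0 \subseteq A \cap A^*$ and $A \cap A^* \subseteq D_0$ separately. The reverse inclusion is the easy one. Every finitely supported diagonal tuple lies in $A_{00}$: indeed a diagonal matrix $x_n$ commutes with $d_n$, so $d_n x_n d_n^{-1} = x_n$ and the tuple $((x_n,x_n))$ genuinely has the form required for membership in $N$. Since $D_0$ is by definition the $p$-norm closure of such tuples, we get $D_0 \subseteq A_0 \subseteq A$ by Corollary \ref{isab}. Moreover $D_0$ is a $C^*$-subalgebra of the diagonal, hence self-adjoint, so $D_0 = D_0^* \subseteq A^*$ as well, and therefore $D_0 \subseteq A \cap A^*$.

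For the forward inclusion I would first identify the self-adjoint-stable part of $N$. Write a general element of $N$ as $a = ((x_n, d_n x_n d_n^{-1}))$. Taking the Hilbert space adjoint on $H$ and using that $d_n$ is positive and diagonal, the second component of $a^*$ is $(d_n x_n d_n^{-1})^* = d_n^{-1} x_n^* d_n$. For $a^*$ to lie again in $N$ this must equal $d_n x_n^* d_n^{-1}$, which rearranges to $x_n^* d_n^2 = d_n^2 x_n^*$, i.e.\ $x_n^*$ commutes with $d_n^2$. Since we are assuming $c_n > 1$, the diagonal entries $c_n^2, c_n^4, \ldots, c_n^{2n}$ of $d_n^2$ are distinct, so commuting with $d_n^2$ forces $x_n^*$, and hence $x_n$, to be diagonal. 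Thus $N \cap N^* = D$. As $a \in A \cap A^*$ implies $a \in N$ and $a^* \in N$, we obtain $A \cap A^* \subseteq A \cap D$.

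It remains to show $A \cap D \subseteq D_0$, and this is where the crux lies. Let $a = ((x_n, x_n)) \in A \cap D$ with each $x_n \in D_n$. The key observation is that for $n \geq k$ the averaging operator $u_{n,k}$ has no diagonal part: since $E_{n,k} = \Ndb_0 \cap [\frac{n}{k}, \frac{2n}{k}]$ and $\frac{n}{k} \geq 1$, every index $i \in E_{n,k}$ satisfies $i \geq 1$, so $u_{n,k}$ is a combination of strictly off-diagonal shift powers $L_n^i$. Consequently $x_n u_{n,k}$ is supported off the main diagonal when $x_n$ is diagonal, so the diagonal of $x_n u_{n,k} - x_n$ equals exactly $-x_n$. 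Applying the canonical contractive conditional expectation onto the diagonal gives $\Vert x_n \Vert \leq \Vert x_n u_{n,k} - x_n \Vert \leq p_n(x_n u_{n,k} - x_n)$ for all $n \geq k$, whence $\sup_{n \geq k} \Vert x_n \Vert \leq p(a u_k - a)$. Because $a \in A$ forces $p(a u_k - a) \to 0$ as $k \to \infty$, we conclude $\limsup_n \Vert x_n \Vert = 0$, that is $(x_n) \in \oplus_n^\circ D_n$ and $a \in D_0$. Combining the inclusions gives $\Delta(A) = A \cap A^* = D_0$. I expect the step isolating the diagonal of $a u_k - a$, together with the verification that $u_{n,k}$ carries no diagonal contribution for $n \geq k$, to be the main obstacle; the remaining arguments are formal.
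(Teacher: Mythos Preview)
Your proof is correct and follows essentially the same route as the paper. Both arguments first show $\Delta(N)=D$ by forcing $x_n$ to commute with $d_n^2$ (you do this via $a^*\in N$, the paper via self-adjoint elements; these are equivalent), and then exploit that $u_{n,k}$ is strictly upper triangular for $n\geq k$ so that the diagonal of $x_n u_{n,k}-x_n$ (or $u_{n,k}x_n-x_n$) is exactly $-x_n$, yielding $\Vert x_n\Vert\leq p(au_k-a)\to 0$. Your use of the contractive diagonal conditional expectation is just a cleaner packaging of the paper's entrywise estimate $|a_n(i)|\leq\Vert u_{n,k}a_n-a_n\Vert$.
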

\begin{proof}       If $((x_n , d_n x_n d_n^{-1}))$ is selfadjoint,
then $x_n$ is  selfadjoint, and $d_n x_n d_n^{-1}$ is selfadjoint, which forces $d_n^2$ to commute with $x_n$.
However this implies that $x_n$ is diagonal.  Since $\Delta(N) = N \cap N^*$ is spanned by its selfadjoint 
elements it follows that $\Delta(N) = D$.
Therefore  $\Delta(A) = D \cap A$,  and this contains $D_0$ since $D_0 \subset A_0 \subset A$
by Corollary \ref{isab}.  The reverse containment follows easily from Lemma \ref{lem4}, but we give
a shorter proof. Let  $(a_n) \in  D \cap A$, with $a_n \in D_n$
for each $n$.  If $\epsilon > 0$ is given, choose $k$ such that $p(u_k (a_n)  - (a_n)) < \epsilon$.
Choose $m$ with  $u_{n,k}$ strictly upper triangular for all $n \geq m$.  Then for 
$n \geq m$ we have $|a_n(i) |$, which is the modulus of the  $i$-$i$ entry of $(u_k (a_n)  - (a_n))$, is
dominated by $$\Vert  u_{n,k} \, a_n - a_n \Vert \leq p(u_k  \, (a_n)  - (a_n)) < \epsilon.$$
Thus $\Vert a_n \Vert < \epsilon$ for $n \geq m$, so that $(a_n) \in D_0$.     \end{proof}

\begin{corollary} \label{ispro}    Projections in  $A^{**}$ which are both open and closed,
or equivalently which are in $M(A)$, must be also in $D$.  Thus they are diagonal matrices
with  $1$'s as the only permissible nonzero entries.  
\end{corollary}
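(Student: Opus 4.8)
The plan is to reduce the statement to the two preceding propositions. I take as given the equivalence built into the statement, namely that a projection $e \in A^{**}$ is simultaneously open and closed exactly when it lies in $M(A)$ (the standard correspondence between clopen projections and projections in the multiplier algebra, as in the references cited in the introduction). Thus it suffices to prove that every projection $e \in M(A)$ already belongs to $D$, and then to read off the stated normal form.

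First I would use Proposition \ref{multis} to identify $M(A)$ with $\{x \in N : xA + Ax \subset A\}$, a subalgebra of $N \subset M \subset B(H)$; in particular $e \in N$. Since for us a projection is an orthogonal projection on $H$, the element $e$ is self-adjoint in the ambient von Neumann algebra $M$, so $e = e^*$. Consequently $e$ lies in $N^*$ as well as in $N$, that is, $e \in N \cap N^* = \Delta(N)$. At this point I would invoke the computation carried out inside the proof of Proposition \ref{diaga}, which shows $\Delta(N) = D$: a self-adjoint tuple $((x_n, d_n x_n d_n^{-1}))$ has each $x_n$ self-adjoint together with each $d_n x_n d_n^{-1}$, so $d_n^2$ commutes with $x_n$, and since $c_n > 1$ the diagonal matrix $d_n^2$ has distinct diagonal entries $c_n^{2k}$, which forces $x_n$ to be diagonal. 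Hence $e \in D$, the copy of $\oplus_n^\infty D_n$ sitting inside $N$.

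Finally, a projection in $D = \oplus_n^\infty D_n$ is a tuple whose $n$th entry is a projection in the commutative diagonal algebra $D_n$; each such entry is therefore a diagonal matrix all of whose nonzero diagonal entries equal $1$, which gives the concluding assertion. I do not anticipate a genuine obstacle here: the whole argument is packaged in Propositions \ref{multis} and \ref{diaga}, and the only point requiring care is the passage from membership in $N$ to membership in $N \cap N^*$, which hinges precisely on the self-adjointness of an orthogonal projection in the surrounding $C^*$-algebra $M$.
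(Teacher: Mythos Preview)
Your argument is correct and follows exactly the paper's approach: invoke Proposition~\ref{multis} to place $M(A)$ inside $N$, then use self-adjointness of the projection together with the computation $\Delta(N)=D$ from the proof of Proposition~\ref{diaga}. The paper's proof is just a one-line citation of these two ingredients, which you have unpacked in full detail.
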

\begin{proof}   This follows from Proposition \ref{multis} and the fact from the proof of 
Proposition \ref{diaga} that $\Delta(N) = D$.  
\end{proof}

Note that the natural approximate identity for $\Delta(A) = D_0$ is not an approximate identity for $A$
(since $D_0 A \subset A_0 A \subset A_0 \neq A$).   Thus $A$ is not $\Delta$-dual in the sense of \cite{ABS}.
In \cite{ABR} we showed that any operator algebra which  is weakly compact is nc-discrete, and we asked if every semisimple 
modular annihilator algebra was nc-discrete.
 To see that our example  $A$ is not nc-discrete  in the sense of \cite{ABS} note that $A_0$ is an r-ideal in $A$
(and an $\ell$-ideal), and its support projection $p$  in $A^{**}$, which is central in $A^{**}$, coincides with the support projection of $D_0$ in $A^{**}$, 
and this is an open projection in  $A^{**}$ which we will show is not closed.

\begin{corollary} \label{iscr}    The algebra $A$ above is not nc-discrete.  
\end{corollary}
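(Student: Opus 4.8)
The plan is to produce a single open projection in $A^{**}$ that fails to be closed, which is exactly what is needed since, by definition, $A$ is nc-discrete precisely when every open projection in $A^{**}$ is closed, equivalently lies in $M(A)$. The natural candidate, already isolated in the discussion preceding the statement, is the common support projection $p = s(A_0) = s(D_0)$. This $p$ is open because $A_0$ is an r-ideal in $A$, so no work is needed on that count; the entire content is to show $p \notin M(A)$.

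The main tool I would use is the weak* continuous homomorphism $\tilde{\pi} : A^{**} \to N$ extending the inclusion $A \hookrightarrow N$, together with the fact established in the proof of Proposition \ref{multis} that $\tilde{\pi}$ is completely isometric, hence injective, on the copy of $M(A)$ inside $A^{**}$. First I would compute $\tilde{\pi}(p)$: taking the block-diagonal approximate identity $(f_m)$ of $D_0$ (the partial identities supported on the first $m$ summands), one checks that $f_m \to 1_N$ weak* in $N$, and since $\tilde{\pi}$ is weak* continuous and fixes $D_0$ pointwise, $\tilde{\pi}(p) = 1_N$. The same computation applied to the cai $(u_k)$ of $A$ gives $\tilde{\pi}(1_{A^{**}}) = 1_N$ as well.

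Next I would record that $p \neq 1_{A^{**}}$: since $A_0$ is a proper subalgebra of $A$ (for instance $u_k \in A \setminus A_0$ once $k \geq 3$, because $p_n(u_{n,k}) \geq \Vert u_{n,k} \Vert \geq 1 - \frac{2}{k}$ for $n \geq k$ by Lemma \ref{lem3}, so $u_k$ violates the vanishing condition defining $A_0$), the support projection of the r-ideal $A_0$ cannot be the identity of $A^{**}$. Now suppose for contradiction that $p \in M(A)$. Then $\tilde{\pi}(p) = 1_N = \tilde{\pi}(1_{A^{**}})$ with both $p$ and $1_{A^{**}}$ lying in $M(A)$, so injectivity of $\tilde{\pi}$ on $M(A)$ forces $p = 1_{A^{**}}$, a contradiction. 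Hence $p \notin M(A)$, so $p$ is open but not closed, and $A$ is not nc-discrete.

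The step I expect to require the most care is the interplay between $\tilde{\pi}$ being non-injective on all of $A^{**}$ yet injective on $M(A)$: both $p$ and $1_{A^{**}}$ map to the same element $1_N$, so one cannot separate them in $N$ directly, and the argument only closes because the hypothesis $p \in M(A)$ is precisely what licenses the use of injectivity. Verifying $\tilde{\pi}(p) = 1_N$ (that is, the weak* convergence of the block partial identities to $1_N$ in $N$) and the strict inclusion $A_0 \subsetneq A$ are the two routine computations underpinning the argument.
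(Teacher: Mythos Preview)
Your argument is correct and follows essentially the same strategy as the paper: both assume $p\in M(A)$, use the injectivity of $\tilde{\pi}$ on $M(A)$ from Proposition~\ref{multis}, and derive a contradiction from $\tilde{\pi}(1-p)=0$ together with $p\neq 1_{A^{**}}$. The only cosmetic difference is in how $\tilde{\pi}(p)=1_N$ is verified---you compute it as the weak* limit of the block partial identities $(f_m)$, whereas the paper shows $\tilde{\pi}(1-p)$ annihilates every $e^n_{i,i}$ and then uses that the SOT sum of the $e^n_{i,i}$ is $1$.
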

\begin{proof}  We saw that $p$ above was open.   If $p$ also was closed in $A^{**}$, or equivalently in $M(A)$, then 
$\tilde{\pi}(1-p)$ would be a nonzero central projection in the  copy of $M(A)$ in $M$.
Also $\tilde{\pi}(1-p) e^n_{i,i}$ is nonzero for some $n$ and $i$, because the SOT sum of the $e^n_{i,i}$ in $M$ is $1$.  
On the other hand, since $e^n_{i,i}$ is in the ideal supported by $p$ we have $$\tilde{\pi}(p) e^n_{i,i}  = 
 \tilde{\pi}(p \, e^n_{i,i}) = \tilde{\pi}(e^n_{i,i}) =e^n_{i,i},$$ and so 
$$\tilde{\pi}(1-p) \,  e^n_{i,i} = \tilde{\pi}(1-p) \,  \tilde{\pi}(p) \, e^n_{i,i}  = 0 .$$  This contradiction shows that
 $A$ is not nc-discrete.    \end{proof}  

Indeed $A_0$ is a nice r- and $\ell$-ideal in $A$ which is supported by an  open projection which  
is not one of the obvious projections, and is not any projection in $M(A)$.
Note that $A$  
is not 
a left or right annihilator algebra in the sense of e.g.\ \cite[Chapter 8]{Pal},
since for example by \cite[Chapter 8]{Pal} this implies that $A$ is compact, whereas 
above we showed that $A$ is not even weakly compact.    The spectrum of $A$ 
is discrete, and every left ideal of $A$ contains a minimal 
left ideal, by \cite[Theorem 8.4.5 (h)]{Pal}.  Also every idempotent in $A$ belongs
to the socle by \cite[Theorem 8.6.6]{Pal}, hence to 
$A_{00}$ by the next result.   From this it is clear what all the 
idempotents  in $A$ are.     

\begin{corollary} \label{issoc} The maximal modular right (resp.\ left) ideals in 
$A$ are exactly the ideals of the form $(1-e) A$ (resp.\ $A(1-e)$) for 
a minimal idempotent $e$ in $A$ which is the canonical copy in $A$ of
a minimal idempotent in $M_n$ for some $n \in \Ndb$.  The socle  of $A$
is $A_{00}$, namely the set of $(a_n) \in A$ with $a_n = 0$ except for at most finitely 
many $n$.    
\end{corollary}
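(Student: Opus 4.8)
The plan is to base everything on a clean description of the minimal idempotents of $A$, and then to dispatch the socle and the maximal modular one-sided ideals in turn. The key structural observation, which I would record first, is that for each $n$ the canonical copy $M_n'$ of $M_n$ in $A$ is a \emph{two-sided} ideal of $A$ (indeed of $N$), since multiplication in $N$ is coordinatewise, and $M_m' M_n' = 0$ for $m \neq n$. Now let $e = (e_m) \in A$ be a nonzero minimal idempotent, so $e^2 = e$ and $e A e = \Cdb e$, and pick $n_0$ with $e_{n_0} \neq 0$. Then $e M_{n_0}' e$ is a nonzero subset of $eAe = \Cdb e$ that is supported only in the $n_0$ coordinate, which forces $e$ itself to be supported only there; hence $e \in M_{n_0}'$ and $e_{n_0}$ is a minimal idempotent of $M_{n_0}$. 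Thus the minimal idempotents of $A$ are precisely the canonical copies of the minimal idempotents of the individual $M_n$, as claimed.

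This immediately yields the socle statement. Since $A$ is semisimple, every minimal right ideal has the form $eA$ for a minimal idempotent $e$, and by the previous paragraph $eA \subseteq M_n' \subseteq A_{00}$ for some $n$; hence ${\rm soc}(A) \subseteq A_{00}$. Conversely each $M_n' = \sum_i e^n_{i,i} A$ is a finite sum of minimal right ideals (the $e^n_{i,i}$ being minimal idempotents), so $A_{00} = \bigcup_n M_n' \subseteq {\rm soc}(A)$, giving ${\rm soc}(A) = A_{00}$.

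For the maximal modular right ideals, the easy half goes as follows. For a minimal idempotent $e \in M_n'$ the set $(1-e)A = \{a \in A : ea = 0\}$ (computed in $A^1$; note $ea \in M_n' \subseteq A$) is a right ideal, modular with left modular unit $e$ since $x - ex \in (1-e)A$, and the map $a \mapsto ea$ induces a right-module isomorphism $A/(1-e)A \cong eA$. As $e$ is minimal this quotient is simple, so $(1-e)A$ is a maximal modular right ideal. For the converse, let $R$ be any maximal modular right ideal, $X = A/R$ the associated simple right module, and $P \subseteq R$ its primitive (annihilator) ideal. The crux is to show $A_0 \not\subseteq P$, and I would do this by proving the stronger fact that $A/A_0$ is radical. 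Here $A_0$ is a closed two-sided ideal of $N$, and the uniform resolvent bounds of Lemma \ref{lem6} together with the estimate in the proof of Corollary \ref{isnotma} show that for every $T \in A$ and every $\lambda \neq 0$ the matrices $\lambda I - T_n$ are invertible with $p_n$-norms of their inverses bounded uniformly in all large $n$; hence $\lambda 1 - T$ is invertible in $N/A_0$, so ${\rm Sp}_{N/A_0}(T + A_0) \subseteq \{0\}$. Since $A/A_0$ is a closed subalgebra of $N/A_0$ carrying the same (quotient) norm, the spectral radius $\lim_k \Vert (T+A_0)^k \Vert^{1/k}$ is computed identically in both, namely it is $0$; thus every element of $A/A_0$ is quasinilpotent and $A/A_0$ is radical, so it has no simple modules. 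Consequently $A_0 \not\subseteq P$, and as $M_m' M_n' = 0$ for $m \neq n$ exactly one summand $M_{n_0}'$ acts nontrivially on $X$; this forces the identity $f_{n_0}$ of $M_{n_0}'$ to act as the identity on $X$ and the representation to factor through $\pi_{n_0} : A \to M_{n_0}$, $(a_m) \mapsto a_{n_0}$. Then $X$ is the simple (row) $M_{n_0}$-module, $R = \{a : v_0 a_{n_0} = 0\}$ for the nonzero row vector $v_0$ corresponding to $1 + R$, and choosing the rank-one idempotent $g = w v_0$ with $v_0 w = 1$ and letting $e$ be its copy in $M_{n_0}'$ gives $(1-e)A = \{a : g a_{n_0} = 0\} = R$. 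The left-ideal statement is entirely symmetric, using left modules and the ideals $A e$.

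The main obstacle is exactly the assertion that no maximal modular right ideal contains $A_0$; everything else is bookkeeping built on the minimal-idempotent description. I expect the cleanest route to be the radicality of $A/A_0$ sketched above, whose one delicate point is that the spectral \emph{radius} (not the full spectrum, which can enlarge in a subalgebra) is insensitive to passing from $N/A_0$ to the closed subalgebra $A/A_0$ with the same norm. This is what allows the uniform resolvent estimates of Section \ref{notw} to do all the real work.
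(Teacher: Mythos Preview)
Your proof is correct, but it takes a genuinely different route from the paper's for the description of the maximal modular one-sided ideals. The identification of the minimal idempotents and the computation of the socle are essentially identical to the paper's (the paper uses $e\,e^n_{i,i}\,e \neq 0$ for some diagonal matrix unit, you use $e\,M'_{n_0}\,e \neq 0$; same idea). For the maximal modular ideals, however, the paper simply invokes \cite[Proposition~8.4.3]{Pal}, which in a semisimple modular annihilator algebra with no nonzero one-sided annihilators identifies the maximal modular left ideals with the sets $A(1-e)$ for minimal idempotents $e$; the modular annihilator property was already established in Corollary~\ref{isnotma}, and the absence of right annihilators follows from $A_0 \subset A$. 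Your argument instead proves directly that $A/A_0$ is a radical Banach algebra, using the uniform resolvent bounds of Lemma~\ref{lem6} and Lemma~\ref{lem4} to see that every element of $N/A_0$ (hence of the isometric closed subalgebra $A/A_0$) has spectral radius~$0$; from this you conclude that every primitive ideal meets $A_0$ nontrivially, localize to a single block $M'_{n_0}$, and finish with an explicit rank-one idempotent. Your route is more self-contained and yields the extra structural fact that $A/A_0$ is radical, at the cost of a longer argument; the paper's route is much shorter because it cashes in the modular annihilator classification already obtained. One small point worth making explicit in your write-up: the spectral-radius step works because $A/A_0$ sits \emph{isometrically} in $N/A_0$ (same ideal $A_0$, same ambient norm), so $\lim_k \Vert (T+A_0)^k \Vert^{1/k}$ is unambiguous; you note this, and it is indeed the one delicate point.
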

\begin{proof}   Let $e = (e_n)$ be a (nonzero) minimal idempotent in $A$. 
Then $e_n$ is an idempotent in $M_n$ for each $n$.
If $e^n_{i,i}$ is as above, then because the SOT sum of the $e^n_{i,i}$ in $M$ is $1$,
we must have $e e^n_{i,i} e \neq 0$ for some $n$ and $i$.
Since $e$ is minimal, for such  $n$, $e$ is in the copy of $M_n$ in $A_0$.
So this $n$ is unique, and $e$ is clearly a 
minimal idempotent in this copy of $M_n$ in $A_0$.  Now it is 
easy to see the assertion about the socle of $A$.   By \cite[Proposition 8.4.3]{Pal},
it follows that the maximal modular left ideals  in $A$  are the  ideals
$A(1-e)$ for an $e$ as above.
We have also used the fact here that $A$ has no right annihilators in $A$.  
Similarly for right ideals. 
 \end{proof}

\begin{corollary} \label{oncom}    The only compact projections in $A^{**}$ for 
the algebra $A$ above are the obvious `main diagonal' ones; that is the projections in 
$D_0 \cap A_{00}$.  
\end{corollary}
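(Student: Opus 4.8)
The plan is to combine the characterization of compact projections as infima of peak projections with the uniform resolvent bound of Lemma \ref{lem6}. First I would dispose of the easy inclusion: any projection $e$ in $D_0\cap A_{00}$ is compact, since such $e$ already lies in $A$, is open (witnessed by the constant net $e$), is closed (as $1-e=(1-e)\lim_s e_s$ for a cai $(e_s)$), and satisfies $ee=e$ with $e\in{\rm Ball}(A)$. The real content is the reverse inclusion, so let $q$ be a compact projection; we may assume $q\neq0$. By the characterization recalled in the introduction (from \cite{BNII}), $q$ is an infimum of peak projections; the index family is nonempty (else $q=1$, impossible since $A$ is nonunital), so there is an $x\in\frac{1}{2}{\mathfrak F}_A$ with $q\le u(x)$, where $u(x)=\lim_m x^m$ weak*. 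Writing $x=(x_n)$, the condition $x\in\frac{1}{2}{\mathfrak F}_A$ forces $p(1-2x)\le1$, hence $\Vert x_n\Vert\le1$ for all $n$; and since $x\in A$, Lemma \ref{lem4} gives $\delta_n:=\Vert d_nx_nd_n^{-1}\Vert\to0$, so $r(x_n)\le\delta_n\to0$ because $x_n$ and $d_nx_nd_n^{-1}$ are similar.

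The goal then is to show $u(x)$ is supported on finitely many blocks, i.e.\ $u(x)\le f_M$ where $f_M:=\sum_{n<M}1_{M_n'}$ is the identity of $\oplus_{n<M}M_n'$, a finite central projection of $A^{**}$ (central since each $M_n'$ is a two-sided ideal). The heart of the matter, and the step I expect to be the main obstacle, is the uniform decay $\sup_{n\ge M}p_n(x_n^m)\to0$ as $m\to\infty$. For the weighted norm this is trivial, as $\Vert(d_nx_nd_n^{-1})^m\Vert\le\delta_n^m$ with $\delta_n\to0$. The difficulty is the operator norm $\Vert x_n^m\Vert$: the naive estimate $\Vert x_n^m\Vert\le\Vert d_n^{-1}\Vert\,\Vert(d_nx_nd_n^{-1})^m\Vert\,\Vert d_n\Vert\le c_n^{\,n-1}\delta_n^m$ is worthless, since the condition number $c_n^{\,n-1}$ of $d_n$ blows up with $n$ — the two norms defining $p_n$ pull against each other. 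This is precisely the tension that Lemma \ref{lem6} resolves.

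To exploit it I would fix $\epsilon$ small enough that $\rho:=4p(x)\epsilon<\frac12$, obtaining from Lemma \ref{lem6} integers $k,m_0$ and a constant $K=K(k,p(x),\epsilon)$, independent of $n$ and $m$, with $\Vert(\lambda I-x_n)^{-1}\Vert\le2K$ whenever $|\lambda|>\rho$ and $n\ge\max\{k,m_0\}$. Choosing $M\ge\max\{k,m_0\}$ large enough that $r(x_n)<\rho$ for $n\ge M$, a Cauchy integral of $\lambda^m(\lambda I-x_n)^{-1}$ over a circle $|\lambda|=\rho'$ with $\rho<\rho'<\frac12$ yields $\Vert x_n^m\Vert\le 2K(\rho')^{m+1}$ uniformly in $n\ge M$, which tends to $0$. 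Hence $\sup_{n\ge M}p_n(x_n^m)\to0$; that is, the tail $x^mf_M^\perp=(x_n^m)_{n\ge M}\in A$ tends to $0$ in norm.

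Finally I would assemble the pieces. Since $f_M\in A_{00}\subset A$ is a central projection, right multiplication by $f_M^\perp$ is weak* continuous on $A^{**}$, so passing to the weak* limit in $x^mf_M^\perp\to u(x)f_M^\perp$ gives $u(x)f_M^\perp=0$; as $u(x)$ is selfadjoint and $f_M$ central, this means $u(x)\le f_M$. Therefore $q\le u(x)\le f_M$, so $q=f_Mqf_M$ lies in $f_MA^{**}f_M=\oplus_{n<M}M_n'\subset A_{00}\subset A$ (the blocks being finite-dimensional, hence reflexive). Thus $q$ is a genuine projection in $A$, whence $q\in A\cap A^*=\Delta(A)=D_0$ by Proposition \ref{diaga}, while $q\in A_{00}$; that is, $q\in D_0\cap A_{00}$ is a finitely supported diagonal tuple with entries in $\{0,1\}$, as claimed.
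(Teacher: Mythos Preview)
Your argument is correct and follows essentially the same route as the paper: both show, via the uniform resolvent bound of Lemma~\ref{lem6}, that for $x\in\frac{1}{2}\mathfrak F_A$ the tail $(x_n^m)_{n\ge M}$ tends to $0$ in the $p$-norm, so that $u(x)$ is supported on finitely many matrix blocks and hence lies in $D_0\cap A_{00}$. The only organizational differences are that the paper packages the decay as ``$r_N(S)<1$ so $S^k\to0$'' (deferring to the proof of Corollary~\ref{isnotma}) where you unpack it via a Cauchy integral, and the paper finishes by taking decreasing weak* limits of peak projections while you finish by dominating $q$ by a single $u(x)\le f_M$; your endgame is if anything slightly cleaner.
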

\begin{proof}  Let $T = (T_n) \in A$, and $\epsilon \in (0,\frac{1}{4 p(T)})$ be given.  As in the proof
of Corollary \ref{isnotma} there exists $m_0 \in \Ndb$
such that if $|\lambda| > 4 p(T) \epsilon$
then
 $\lambda I - T_n$ is invertible for $n \geq m_0$, and the usual matrix norm of its inverse
is bounded independently of $n  \geq m_0$.   As in that proof, if $S_n = T_n$ for $n \geq m_0$, and 
$S_n = 0$ for $n < m_0$, then $\lambda I - S$ is invertible in $N$.  Thus the spectral 
radius $r(S) \leq 4 p(T) \epsilon < 1$. Hence $\lim_{k \to \infty} \,  S^k = 0$ in norm.     
Let $q$ be the central projection in $A$ corresponding 
to the identity of $\oplus_{n=1}^{m_0 -1} \, M_n$.
If now also $T \in \frac{1}{2} {\mathfrak F}_A$, then $T^k$ converges 
weak* to its peak projection $u(T)$ weak* 
by \cite[Lemma  3.1, Corollary 3.3]{BNII}, as $k \to \infty$.
Thus $T^k q \to u(T) q$ and $T^k (1-q) = S^k  \to u(T) (1-q)$ weak*.
Clearly it follows that $u(T) q$ is a projection in $A$, hence in $D_0 \cap A_{00}$ as we 
said above. On the other hand, since $S^k  \to 0$ we have $u(T) (1-q) = 0$.
Thus $u(T)$ is a projection in $D_0 \cap A_{00}$.

Finally we recall from \cite{BNII} that the compact projections in $A^{**}$ 
are decreasing limits of such $u(T)$.    Thus any 
compact projection is in $D_0 \cap A_{00}$. \end{proof}   

One may ask if there exists a  {\em commutative} semisimple approximately unital operator algebra which is a modular annihilator algebra but is not weakly compact.   After this paper was submitted 
we were able to check that the algebra
constructed in  \cite{BRIV} was such an algebra.  However this example
 is quite a bit more complicated than the interesting noncommutative example above.   

\medskip

We end this paper by mentioning 
 a complement to the example above.   In \cite[p.\ 76]{ABR} we asked if for an approximately unital commutative operator algebra $A$, which is an ideal in its bidual (which means that multiplication by any fixed element of $A$ is weakly compact), is the spectrum of every element at most countable; and is the spectrum of $A$ scattered?   In particular, is it a 
modular annihilator algebra  (we recall that 
compact semisimple algebras are modular
annihilator algebras \cite[Chapter 8]{Pal}).
There is in fact a simple counterexample to these questions, which is quite well known in other contexts. 
The example may be described either in the operator theory language of weighted unilateral shifts, and the $H^p(\beta)$ spaces that occur there, or in the Banach algebra language of weighted convolution algebras $l^p(\Ndb_0,\beta)$.  
These are equivalent (in particular, $H^2(\beta) = l^2(\Ndb_0,\beta)$).  For brevity we just mention the
 operator theory angle (a previous draft had a much fuller exposition).
Let $T$ be a weighted unilateral shift which is  one-to-one (that is, 
none of the weights are zero), and let $A$ be  the algebra  generated by $T$.
Then $A$ is  isomorphic to a Hilbert space if  $T$ is  {\em strictly cyclic} in Lambert's sense \cite{Sh}.
Central to the theory of weighted shifts is the convolution algebra
$H^2(\beta) =  l^2(\Ndb_0,\beta)$, and its space of `multipliers' $H^\infty(\beta)$ .  These spaces can canonically be viewed as spaces of converging (hence analytic) power series on a disk, via the map $(\alpha_n) \mapsto \sum_{n = 0}^\infty \, \alpha_n z^n$.  
By the well known theory in \cite{Sh}, $T$ is 
unitarily equivalent to multiplication by $z$ on $H^2(\beta)$, the latter viewed as a space of power series on the disk 
of radius $r(T)$.   In our strictly cyclic case, $A$, which equals its weak closure, is unitarily equivalent via the same unitary to  
 $H^\infty(\beta)$.  Since $H^\infty(\beta) H^2(\beta) \subset H^2(\beta)$ and the constant polynomial is in $H^2(\beta)$, it is clear that $H^\infty(\beta) \subset H^2(\beta)$.  However, since the constant polynomial $1$ is a strictly cyclic vector, we in fact have $H^\infty(\beta) = H^\infty(\beta) 1 = H^2(\beta)$  (see p.\ 94 in that reference).
On the same page of that reference we see that the closed disk $D$ of radius $r(T)$ is the maximal ideal space of $H^\infty(\beta)$, and the spectrum of any $f \in H^\infty(\beta)$ is $f(D)$.  In particular, Sp$_A(T) =D$, and $A$ is semisimple.

\medskip

{\em Acknowledgements.}  We thank Garth Dales and Tomek Kania for discussions on 
algebraically finitely generated ideals in 
Banach algebras, and in particular drawing our attention to the results in \cite{ST}.  The first author wishes to thank the departments at the Universities of 
Leeds and Lancaster, and in particular the second author and Garth Dales,
for their warm hospitality during a visit in April--May 2013.     We also gratefully acknowledge   support from UK research council
grant  EP/K019546/1
for largely funding that visit.  We thank Ilya Spitkovsky 
for some clarifications regarding roots of operators and results in \cite{MP}, and Alex Bearden for spotting several typos
in the manuscript.


\begin{thebibliography}{99} 

\bibitem{Ake2} C. A. Akemann, {\em Left ideal structure of
    $C^*$-algebras}, J.\ Funct.\ Anal.\ \textbf{6} (1970), 305--317.  

\bibitem{SOC}  W. B. Arveson, {\em Subalgebras of $C^{*}$-algebras,}
 Acta Math.\  {\bf 123} (1969), 141--224.  

 \bibitem{AE2}   
L. Asimow and  A. J. Ellis,  {\em On Hermitian functionals on unital Banach algebras,}
 Bull. London Math. Soc. {\bf 4} (1972), 333--336. 

 

 \bibitem{AE}   
L. Asimow and  A. J. Ellis, {\em Convexity theory and its applications in functional analysis,}
London Mathematical Society Monographs, 16, Academic Press
London-New York, 1980.  

 \bibitem{ABS}  
M. Almus, D. P. Blecher, and S. Sharma,  {\em Ideals and structure of operator algebras,}   J.\
Operator Theory {\bf 67} (2012), 397--436.  

\bibitem{BBS}  C. A. Bearden, D. P. Blecher and S. Sharma, {\em On positivity and roots in operator algebras,} Preprint, 2013.   


\bibitem{Bnew}  D.\  P.\ Blecher, {\em Noncommutative peak interpolation
revisited}, to appear Bull.\ London Math.\ Soc.
 
\bibitem{BHN}  D. P. Blecher, D. M. Hay, and
M. Neal, {\em Hereditary subalgebras of operator algebras,} J.\
Operator Theory {\bf 59} (2008), 333-357.

\bibitem{BLM}  D. P. Blecher
and C.  Le Merdy, {\em Operator algebras and their modules---an
operator space approach,} Oxford Univ.\  Press, Oxford (2004).  


\bibitem{BNI} D. P. Blecher
and M. Neal, {\em Open projections in operator algebras I: Comparison Theory},
Studia Math. {\bf 208} (2012), 117-150.



\bibitem{BNII} D. P. Blecher
and M. Neal, {\em Open projections in operator algebras II: compact projections,} Studia Math.\ {\bf 209} (2012), 203-224.  

 \bibitem{ABR}  
M. Almus, D. P. Blecher and C. J. Read, {\em Ideals and hereditary subalgebras  in operator algebras},
Studia Math.\  {\bf 212} (2012), 65--93.  

\bibitem{BRI}  D. P. Blecher and C. J. Read, {\em  Operator algebras with contractive approximate identities,}
J. Functional Analysis {\bf 261} (2011), 188-217.  

\bibitem{BRII}  D. P. Blecher and C. J. Read, {\em  Operator algebras with contractive approximate identities II,} J. Functional Analysis {\bf 261} (2011), 188-217.
 
\bibitem{BRIV}  D. P. Blecher and C. J. Read, {\em  Operator algebras with contractive approximate identities IV: a large operator 
algebra in $c_0$,}  ArXiV Preprint (2013).

\bibitem{CGK}  B. Cascales,  A. J. Guirao, and V. A  Kadets, {\em Bishop-Phelps-Bollobás type theorem for uniform algebras,}
Adv. Math. {\bf 240} (2013), 370--382.

 \bibitem{Dal}  H. G. Dales, {\em Banach algebras and automatic continuity},
London Mathematical Society Monographs.
New Series, 24, Oxford Science Publications.
The Clarendon Press, Oxford University Press, New York, 2000.  

\bibitem{Haase} M. Haase, {\em The functional calculus for sectorial operators,}
 Operator Theory: Advances and Applications, 169, Birkhäuser Verlag, Basel, 2006.  

\bibitem{Hay}  D. M. Hay, {\em Closed projections and peak interpolation for operator algebras,}
  Integral Equations Operator Theory  {\bf 57}  (2007),  491--512.  

\bibitem{LRS}  C-K. Li, L. Rodman, and I. M. Spitkovsky, {\em
 On numerical ranges and roots,}  J.
Math. Anal. Appl. 282 (2003),  329--340.

\bibitem{KR}  
R. V. Kadison and J. R. Ringrose,
{\em Fundamentals of the theory of operator algebras,} Vol. 1, 
Graduate Studies in Mathematics,
Amer.\ Math.\ Soc.\ Providence, RI, 1997.  

\bibitem{LN}  K. B. Laursen and M. M.  Neumann, {\em An introduction to local spectral theory,}
 London Mathematical Society Monographs, New Series, 20, The Clarendon Press, Oxford University Press, New York, 2000.

\bibitem{Li}   B. R. Li,
{\em Real operator algebras}, World Scientific Publishing Co., Inc., River Edge, NJ, 2003.  

\bibitem{MP}  V. I Macaev and Ju. A. Palant,  {\em On the
powers of a bounded dissipative operator} (Russian),
  Ukrain. Mat. Z.  {\bf 14}  (1962), 329--337.

\bibitem{Moore}   R. T. Moore, {\em Hermitian functionals on B-algebras and duality characterizations of $C^∗$-algebras,}  Trans. Amer. Math. Soc. {\bf 162} (1971),  253--265.

 \bibitem{Pal} T. W. Palmer, {\em Banach algebras and the general
theory of $*$-algebras, Vol.\ I.\ Algebras and Banach algebras,}
Encyclopedia of Math.\ and its Appl., 49, Cambridge University
Press, Cambridge, 1994.  

\bibitem{Ped} G. K. Pedersen, {\em $C^*$-algebras and their automorphism
groups,} Academic Press, London (1979).

\bibitem{Read}  C. J. Read,   {\em On the quest for positivity in operator algebras,}
 J. Math. Analysis and Applns.\  {\bf 381} (2011), 202--214.  

\bibitem{Sh}  A. L. Shields, Weighted shift operators and analytic function theory, In {\em Topics in operator theory,}  pp.\ 49--128. Math. Surveys, No. 13, Amer. Math. Soc., Providence, R.I., 1974.  



\bibitem{ST}  A. M. Sinclair and A. W.  Tullo, {\em  
Noetherian Banach algebras are finite dimensional},
Math. Ann. {\bf 211} (1974), 151--153. 

\bibitem{NF}   B. Sz.-Nagy, C. Foias, H.  Bercovici, and L.  Kerchy, {\em  Harmonic analysis of operators on Hilbert space,} Second edition,  Universitext. Springer, New York, 2010. 
\end{thebibliography}
\end{document}